\newcommand{\floor}[1]{\left\lfloor #1 \right\rfloor}
\newcommand{\ceil}[1]{\left\lceil #1 \right\rceil}
\newcommand{\abs}[1]{\left\lvert #1 \right\rvert}
\newcommand{\EE}{\mathbb E}
\newcommand{\RR}{\mathbb R}
\newcommand{\cC}{\mathcal C}
\newcommand{\cG}{\mathcal G}
\newcommand{\cS}{\mathcal S}
\newcommand{\cT}{\mathcal T}
\newcommand{\eps}{\varepsilon}
\newcommand{\ol}{\overline}
\DeclareMathOperator{\Sol}{Sol}
\definecolor{DarkSlateGray3}{RGB}{121, 205, 205}
\definecolor{MediumPurple3}{RGB}{137, 104, 205}
\definecolor{Honeydew3}{RGB}{193, 205, 193}
\definecolor{Honeydew4}{RGB}{131, 139, 131}
\begin{document}

\begin{frontmatter}[classification=text]

\title{Bounds for the Local Properties Problem for Difference Sets} 

\author[sdas]{Sanjana Das}

\begin{abstract}
  We consider the local properties problem for difference sets: we define $g(n, k, \ell)$ to be the minimum value of $\abs{A - A}$ over all $n$-element sets $A \subseteq \RR$ with the `local property' that $\abs{A' - A'} \geq \ell$ for all $k$-element subsets $A' \subseteq A$. We view $k$ and $\ell$ as fixed, and study the asymptotic behavior of $g(n, k, \ell)$ as $n \to \infty$. One of our main results concerns the quadratic threshold, i.e., the minimum value of $\ell$ such that $g(n, k, \ell) = \Omega(n^2)$; we determine this value exactly for even $k$, and we determine it up to an additive constant for odd $k$. We also show that for all $1 < c \leq 2$, the `threshold' for $g(n, k, \ell) = \Omega(n^c)$ is quadratic in $k$; conversely, for $\ell$ quadratic in $k$, we obtain upper and lower bounds of the form $n^c$ for (not necessarily equal) constants $c > 1$. In particular, this provides the first nontrivial upper bounds in the regime where $\ell$ is quadratic in $k$. 
\end{abstract}
\end{frontmatter}


\section{Introduction}

\subsection{History of local properties problems} \label{subsec:history}

There is a long history of studying objects that have local properties, and trying to understand the extent to which these local properties determine global properties. In particular, many questions of this form concern going from local to global structure: if we know every small piece of an object is `unstructured' in some sense, how unstructured does the \emph{entire} object have to be? 

For example, Erd\H{o}s and Shelah \cite[Section V]{Erd86} initiated the study of a local properties problem for graphs, where we consider edge-colorings of a complete graph and think of an edge-coloring as `unstructured' if it contains many distinct colors. For positive integers $n$, $k$, and $\ell$, we define $f(n, k, \ell)$ to be the minimum number of colors in an edge-coloring of $K_n$ with the property that every induced subgraph $K_k$ contains at least $\ell$ colors. We think of $k$ and $\ell$ as fixed and $n$ as large, and we wish to understand the asymptotic behavior of $f(n, k, \ell)$ as $n$ tends to infinity. There are two natural ways to approach this problem. One is to fix $k$ and $\ell$ and attempt to bound $f(n, k, \ell)$. The other is to attempt to understand various thresholds for $f(n, k, \ell)$ --- given $k$, we search for the smallest value of $\ell$ (as a function of $k$) for which we have a certain lower bound on $f(n, k, \ell)$. 

This local properties problem for graphs has been studied by many authors: Erd\H{o}s and Gy\'arf\'as \cite{EG97} established the linear and quadratic thresholds (the smallest values of $\ell$ as a function of $k$ for which $f(n, k, \ell) = \Omega(n)$ and $f(n, k, \ell) = \Omega(n^2)$, respectively) and an upper bound on the polynomial threshold (the smallest value of $\ell$ for which $f(n, k, \ell) = \Omega(n^\eps)$ for some $\eps > 0$); Conlon, Fox, Lee, and Sudakov \cite{CFLS15} later proved a matching lower bound, thus establishing the polynomial threshold. Erd\H{o}s and Gy\'arf\'as \cite{EG97} also proved a general upper bound for $f(n, k, \ell)$ using a probabilistic construction. Pohoata and Sheffer \cite{PS19} and Fish, Pohoata, and Sheffer \cite{FPS20} later proved lower bounds for $f(n, k, \ell)$ in several regimes, which grow arbitrarily close to the upper bound of Erd\H{o}s and Gy\'arf\'as as $k$ grows large. A more detailed discussion of this problem can be found in \cite{FPS20} and \cite{PS19}. 

Erd\H{o}s \cite{Erd86} also posed a similar local properties problem for distinct distances, to determine the minimum number of distinct distances spanned by a set of $n$ points in $\RR^2$ such that every $k$ points span at least $\ell$ distinct distances. This problem has also been studied by various authors; more discussion of this problem can be found in \cite{FLS19}.

\subsection{A local properties problem for difference sets}

We study an arithmetic version of such local properties problems, suggested by Zeev Dvir (see \cite{PS19}). We consider sets of real numbers for which every small piece is `arithmetically unstructured,' and we wish to understand how arithmetically unstructured this local property requires the entire set to be. One measure of the extent to which a set is arithmetically structured is the size of its difference set --- roughly, sets with smaller difference set relative to their size possess more arithmetic structure. So we consider sets for which every small subset has a reasonably large difference set, and attempt to understand how large the difference set of the entire set must be. 

We now make this precise. For a set $A \subseteq \RR$, we define the \emph{difference set} of $A$ to be \[A - A = \{\abs{a - b} \mid a, b \in A, \, a \neq b\}.\] Note that we include only positive differences in $A - A$; this definition is somewhat nonstandard but more natural for the problem we study. (When discussing the differences present in a set, we refer only to positive differences, unless otherwise stated.)

For integers $n \geq k \geq 1$ and $\ell \geq 0$, we define $g(n, k, \ell)$ to be the minimum value of $\abs{A - A}$ over all $n$-element sets $A \subseteq \RR$ with the property that every $k$-element subset $A' \subseteq A$ satisfies $\abs{A' - A'} \geq \ell$; we refer to this property as the \emph{$(k, \ell)$-local property}. (Note that the same definition can be used to define $g(n, k, \ell)$ even when $\ell$ is not a nonnegative integer; then $g(n, k, \ell) = g(n, k, \ceil{\ell})$. This makes certain results slightly more convenient to state.) We think of $k$ and $\ell$ as fixed and $n$ as large, and we study the asymptotic behavior of $g(n, k, \ell)$ as $n$ tends to infinity. (In particular, in all asymptotic notation in this paper, $n$ tends to infinity while all other quantities are fixed; the implied constants may depend on $k$, $\ell$, and any other stated parameters.)

As with the local properties problem for graph colorings, we can attempt to understand $g(n, k, \ell)$ either by fixing $\ell$, or by considering various thresholds. Note that for all $n$-element sets $A \subseteq \RR$ we have $n - 1 \leq \abs{A - A} \leq \binom{n}{2}$ (and $\abs{A - A} = n - 1$ if and only if $A$ is an arithmetic progression), so for all $\ell \leq \binom{k}{2}$ we have \[n - 1 \leq g(n, k, \ell) \leq \binom{n}{2}.\] (If $\ell > \binom{k}{2}$, then no set $A$ can satisfy the $(k, \ell)$-local property, so we say that $g(n, k, \ell) = +\infty$.) Also note that $g(n, k, \ell)$ is weakly increasing in $\ell$, as increasing $\ell$ only makes the $(k, \ell)$-local property stronger. So it is natural to consider, for fixed $k$, how long $g(n, k, \ell)$ stays linear and when it becomes quadratic as we increase $\ell$. To formalize this, as defined in \cite{Li22}, the \emph{superlinear threshold} is defined to be the largest integer $\ell$ (as a function of $k$) such that $g(n, k, \ell) = O(n)$, and the \emph{quadratic threshold} is defined to be the smallest integer $\ell$ (as a function of $k$) such that $g(n, k, \ell) = \Omega(n^2)$. (Note that $g(n, k, k - 1) = n - 1$, so at the superlinear threshold we actually have $g(n, k, \ell) = \Theta(n)$; similarly, if $k \geq 4$ then $g(n, k, \binom{k}{2}) = \binom{n}{2}$, so at the quadratic threshold we actually have $g(n, k, \ell) = \Theta(n^2)$.)  

\subsection{Previous bounds}

This problem was first studied by Fish, Pohoata, and Sheffer \cite{FPS20}, who proved the following family of lower bounds: for all $r \geq 2$ and $k$ divisible by $2r$, we have 
\begin{align}
    g\left(n, k, \binom{k}{2} - \frac{(r - 1)(r + 2)}{2}\binom{k/r}{2} + 1\right) = \Omega(n^{\frac{r}{r - 1}\cdot \frac{k - 2r}{k}}).\label{eqn:fps}
\end{align} 
For example, taking $r = 2$, this gives that for $4 \mid k$ we have $g(n, k, \frac{k^2}{4} + 1) = \Omega(n^{2 - \frac{8}{k}})$; more generally, if we think of $r$ as fixed and $k$ as large, this provides a family of lower bounds for $g(n, k, \ell)$ for values of $\ell$ that are quadratic in $k$, with leading coefficient $\frac{1}{2} - \frac{(r - 1)(r + 2)}{4r^2}$ (which ranges from $\frac{7}{32}$ to $\frac{1}{4}$). Fish, Pohoata, and Sheffer obtained these bounds using the technique of color energy, which they had developed to study the local properties problem for graph colorings. 

Fish, Pohoata, and Sheffer also proved the following upper bound for $g(n, k, \ell)$ (for certain values of $\ell$): for any $\eps > 0$, there exists $c > 0$ such that for all sufficiently large $k$ we have \[g(n, k, ck(\log k)^{\frac{1}{4} - \eps}) = n\cdot 2^{O(\sqrt{n\log n})}.\]
The idea of their proof is to consider $3$-term arithmetic progressions --- results from additive combinatorics give that for such values of $k$ and $\ell$, any $k$-element set with fewer than $\ell$ differences must contain a $3$-term arithmetic progression. Then any set $A$ with no $3$-term arithmetic progressions satisfies the $(k, \ell)$-local property, and Behrend's construction in \cite{Beh46} provides such sets $A$ with reasonably small $\abs{A - A}$. 

Fish, Lund, and Sheffer \cite{FLS19} then proved the upper bound \[g\left(n, k, \frac{k^{\log_2 3} - 1}{2}\right) = O(n^{\log_2 3}).\] Their construction for the set $A$ was roughly an affine $t$-cube (i.e., a set of the form $\{a + \eps_1d_1 + \cdots + \eps_t d_t \mid \eps_1, \ldots, \eps_t \in \{0, 1\}\}$ for fixed $a$, $d_1$, \ldots, $d_t$) with $t \approx \log_2 n$. 

This problem was then studied by Li \cite{Li22}, who proved that the superlinear threshold is $k - 1$ by showing that $g(n, k, k) = \omega(n)$, i.e., if $A$ is an $n$-element set with no $k$-term arithmetic progression, then we must have $\abs{A - A} = \omega(n)$. 

Li also proved several lower bounds for $g(n, k, \ell)$: if $k$ is even, then \begin{align}g\left(n, k, \frac{3k^2}{8} - \frac{3k}{4} + 2\right) = \Omega(n^2);\label{eqn:li-2-bound}\end{align} if $k$ is divisible by $8$, then \begin{align}g\left(n, k, \frac{9k^2}{32} - \frac{9k}{16} + 5\right) = \Omega(n^{4/3});\label{eqn:li-4-3-bound}\end{align} and if $k \geq 8$ is a power of $2$, then \begin{align}g\left(n, k, \frac{k^{\log_2 3} + 1}{2}\right) = \Omega(n^{1 + \frac{2}{k - 2}}).\label{eqn:li-huge-cube}\end{align} In particular, \eqref{eqn:li-2-bound} implies that the quadratic threshold is at most $\frac{3k^2}{8} - \frac{3k}{4} + 2$. The proofs of all three of these bounds can be interpreted as showing that a set with sufficiently small difference set must contain $\frac{k}{2^t}$ disjoint congruent affine $t$-cubes (where $t = 1$, $2$, and $\log_2 k - 1$, respectively), whose elements form a $k$-element subset contradicting the $(k, \ell)$-local property for the stated values of $\ell$.

Li also proved, using a random construction, that for all $c \geq 2$ and $k$ large with respect to $c$, we have \[g(n, k, ck + 1) = O(n^{1 + \frac{c^2 + 1}{k}}).\]

\subsection{Our results}\label{subsec:results}

In this paper, we prove a family of upper and lower bounds for $g(n, k, \ell)$. These bounds have several consequences that allow us to better understand the behavior of $g(n, k, \ell)$; we state these consequences first, as they are more intuitive. 

First, we establish the quadratic threshold for all even $k$. 

\begin{corollary}\label{cor:even-qt}
    For all even $k$, we have \[g\left(n, k, \frac{k^2}{4}\right) = o(n^2) \text{ and } g\left(n, k, \frac{k^2}{4} + 1\right) = \Omega(n^2).\] In particular, the quadratic threshold is $\frac{k^2}{4} + 1$. 
\end{corollary}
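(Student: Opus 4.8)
The statement packages a lower bound $g(n,k,\tfrac{k^2}{4}+1)=\Omega(n^2)$ with an upper bound $g(n,k,\tfrac{k^2}{4})=o(n^2)$; the claim about the quadratic threshold then follows immediately from its definition together with the monotonicity of $g$ in $\ell$. The object driving both directions is the \emph{symmetric configuration}: a $k$-element set of the form $\{a_1,\dots,a_{k/2}\}\cup\{s-a_1,\dots,s-a_{k/2}\}$, i.e.\ $k/2$ pairs sharing a common midpoint $s/2$. A direct computation shows that, for any choice of the $a_i$, such a set has at most $\binom{k/2}{2}+\binom{k/2+1}{2}=\tfrac{k^2}{4}$ differences: every difference is either of the form $\abs{a_i-a_j}$ (at most $\binom{k/2}{2}$ values) or $\abs{a_i+a_j-s}$ with $i\le j$ (at most $\binom{k/2+1}{2}$ values), and generically there are exactly $\tfrac{k^2}{4}$.

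For the lower bound I would argue as follows. Let $A$ be an $n$-element set with $\abs{A-A}\le\tfrac{n^2}{4k+1}$. With $r(s)=\abs{\{(a,b)\in A^2:a+b=s\}}$, the additive energy satisfies $\sum_s r(s)^2=\sum_x\abs{\{(a,b)\in A^2:a-b=x\}}^2\ge\tfrac{n^4}{2\abs{A-A}+1}$ by Cauchy--Schwarz (there are only $2\abs{A-A}+1$ nonzero difference-counts), hence $\max_s r(s)\ge\tfrac{\sum_s r(s)^2}{\sum_s r(s)}\ge\tfrac{n^2}{2\abs{A-A}+1}\ge k+1$ once $n$ is large. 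Fix such an $s$: the pairs $\{a,s-a\}$ with $a,s-a\in A$ and $a\ne s-a$ are pairwise disjoint, so we may select $k/2$ of them, producing a $k$-element $A'\subseteq A$ that is a symmetric configuration and therefore has $\abs{A'-A'}\le\tfrac{k^2}{4}$. Thus $A$ fails the $(k,\tfrac{k^2}{4}+1)$-local property, so $g(n,k,\tfrac{k^2}{4}+1)>\tfrac{n^2}{4k+1}$ for all large $n$, i.e.\ $g(n,k,\tfrac{k^2}{4}+1)=\Omega(n^2)$.

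For the upper bound I would use a random construction with a deletion step. Let $N=\lceil n^2/(\log n)^2\rceil$ and let $A_0$ be a uniformly random subset of $[N]$ of size $n+O_k(\operatorname{polylog}n)$; then $\abs{A_0-A_0}\le N-1=o(n^2)$ deterministically, since $A_0\subseteq[N]$. Call a $k$-element set \emph{bad} if it has at most $\tfrac{k^2}{4}-1$ differences. The key input is a structural lemma: any $k$-element subset of $\RR$ with at most $\tfrac{k^2}{4}-1$ differences has at most $k/2$ degrees of freedom — equivalently, for each difference-coincidence pattern realizing $\le\tfrac{k^2}{4}-1$ values, the locus of realizations is a (union of finitely many) subspaces of $\RR^k$ of dimension $\le k/2$. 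Granting this, the number of bad $k$-subsets of $[N]$ is $O_k(N^{k/2})$, so the expected number contained in $A_0$ is $O_k\!\big(N^{k/2}(n/N)^k\big)=O_k\!\big((n^2/N)^{k/2}\big)=O_k((\log n)^k)=n^{o(1)}$. Fixing an outcome with $\le n^{o(1)}$ bad subsets and deleting one element from each destroys them all; the resulting set has $\ge n$ elements, difference set of size $o(n^2)$, and the $(k,\tfrac{k^2}{4})$-local property, and passing to an $n$-element subset gives $g(n,k,\tfrac{k^2}{4})=O(n^2/(\log n)^2)=o(n^2)$.

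The main obstacle is the structural lemma underlying the upper bound: one must show that a $k$-point configuration with fewer than $\tfrac{k^2}{4}$ differences cannot be genuinely $(k/2)$-dimensional, so that such configurations are sparse in $[N]$. This is sharp, since the symmetric configuration is exactly $(k/2)$-dimensional with exactly $\tfrac{k^2}{4}$ differences; thus the lemma is really a tight lower bound on the number of differences of a $d$-dimensional $k$-point set for $d$ large relative to $k$, which I would expect to establish by induction on the dimension (or a compression argument), with the symmetric configuration emerging as the extremal case. By comparison the lower bound is comparatively soft once one views $\abs{A-A}$ small as a statement about many coincidences among \emph{sums} rather than differences.
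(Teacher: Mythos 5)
Your lower bound argument is correct and, up to constants, exactly the paper's: the paper's Lemma~\ref{lem:repeated-sums} proves the same additive-energy/Cauchy--Schwarz inequality, and the $t=1$ case of Lemma~\ref{lem:find-cube} extracts $k/2$ disjoint pairs with a common sum, whose union is your symmetric configuration; your count of $\binom{k/2}{2}+\binom{k/2+1}{2}=k^2/4$ differences is also the paper's (Lemma~\ref{lem:cube-few-diffs} with $t=1$, $s=k/2$).

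For the upper bound, the overall strategy (random subset of an interval, alteration to destroy bad $k$-sets, structural lemma bounding the ``dimension'' of bad configurations) is the same as the paper's proof of Theorem~\ref{thm:upper-bound} via Lemmas~\ref{lem:random-construct} and~\ref{lem:bounding-diffs}. Your simplification of taking $A_0\subseteq[N]$ rather than a Behrend set inside $[N]$ is fine for this particular corollary: AP-freeness is used in the paper only for the refinement in Proposition~\ref{prop:odd-upper-bound}, not for the even $c=2$ case. However, there is a genuine gap: the ``structural lemma'' that any $k$-point configuration with at most $k^2/4-1$ distinct differences sits on a subvariety of $\RR^k$ of dimension at most $k/2$ is not proved, and it is the entire technical content of the theorem. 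In the paper this is Lemma~\ref{lem:bounding-diffs}: if $\cC$ is a valid $k$-configuration with $\dim(\cC)=d\ge k/2+1$, every generic solution has at least $k^2/4$ distinct differences. Its proof is not an induction on dimension or a compression argument; it requires introducing the notion of a collection of difference equalities ``$i$-certifying'' an index, and the key combinatorial fact (Lemma~\ref{lem:2s-certify}) that $s$ linearly independent difference equalities all involving $x_i$ can $i$-certify at most $2s$ indices, which in turn rests on a careful analysis of minimal implications among difference equalities (Lemma~\ref{lem:gen-min-impl}) and a ``blobs of boxes'' decomposition. You correctly identify the symmetric configuration as the extremal case and correctly reduce the upper bound to this dimension-vs-differences inequality, but waving at an unspecified induction does not close the gap --- without a proof of the structural lemma, the $o(n^2)$ half of the corollary is not established.
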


For odd $k$, we are not able to determine the exact value of the quadratic threshold, but we obtain upper and lower bounds for the quadratic threshold that differ by $3$. 

\begin{corollary}\label{cor:odd-qt}
    For all odd $k$, we have \[g\left(n, k, \frac{(k + 1)^2}{4} - 4\right) = o(n^2) \text{ and } g\left(n, k, \frac{(k + 1)^2}{4}\right) = \Omega(n^2).\] In particular, the quadratic threshold is between $\frac{(k + 1)^2}{4} - 3$ and $\frac{(k + 1)^2}{4}$, inclusive. 
\end{corollary}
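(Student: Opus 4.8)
The plan is to deduce Corollary~\ref{cor:odd-qt} from the even case (Corollary~\ref{cor:even-qt}) together with the paper's main upper and lower bounds, handling the parity by an elementary argument on one side and a construction on the other. Throughout write $k = 2m+1$, so that $\tfrac{(k+1)^2}{4} = (m+1)^2$.

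First I would prove the lower bound $g(n, k, (m+1)^2) = \Omega(n^2)$ by reducing to the even case. By the lower half of Corollary~\ref{cor:even-qt} applied with the even value $2m$ in place of $k$, there is a constant $c > 0$ such that for all large $n$, every $n$-element set $A \subseteq \RR$ with $\abs{A - A} < cn^2$ contains a $2m$-element subset $S_0$ with $\abs{S_0 - S_0} \le m^2$. Since $n > 2m$, choose any $x \in A \setminus S_0$ and set $S = S_0 \cup \{x\}$; then $\abs{S} = k$, and as the only differences of $S$ not already in $S_0 - S_0$ are the values $\abs{x - s}$ with $s \in S_0$,
\[\abs{S - S} \;\le\; \abs{S_0 - S_0} + \abs{S_0} \;\le\; m^2 + 2m \;=\; (m+1)^2 - 1 \;<\; \frac{(k+1)^2}{4}.\]
Thus $S$ is a $k$-element subset of $A$ with fewer than $\tfrac{(k+1)^2}{4}$ differences, so $A$ fails the $(k, \tfrac{(k+1)^2}{4})$-local property; hence $g(n, k, \tfrac{(k+1)^2}{4}) = \Omega(n^2)$.

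For the upper bound $g(n, k, (m+1)^2 - 4) = o(n^2)$ I would invoke the paper's main upper-bound theorem, which should produce an $n$-element $A \subseteq \RR$ with $\abs{A - A} = o(n^2)$ in which every $k$-element subset spans at least $(m+1)^2 - 4$ differences. All of the work here is in the construction, because the familiar structured examples fail: a generalized arithmetic progression of dimension $\ge 2$ always contains a low-dimensional sub-grid (for instance a $\sqrt{k} \times \sqrt{k}$ grid, or a sub-cube) spanning only $O(k)$ or $O(k^{\log_2 3})$ differences, far below $(m+1)^2$; a one-dimensional GAP contains a $k$-term progression; and a random set gives only thresholds linear in $k$. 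So $A$ must be difference-sparse yet contain no difference-poor $k$-subset. The ``extremal'' $k$-subsets to beat are the hooks $\{0, 1, \dots, a-1\} \cup \{a, 2a, \dots, ba\}$ with $a + b = k$, whose difference set is exactly the interval $\{1, \dots, ab\}$; for even $k$ the balanced hook ($a = b = k/2$) has precisely $k^2/4$ differences, the value around which the even threshold sits, whereas for odd $k$ the balanced hook ($a = m$, $b = m+1$) has only $m(m+1) = \floor{k^2/4}$ differences, strictly below $(m+1)^2$ for large $k$ --- so for large $k$ the odd construction must in fact avoid these hooks. It is exactly this mismatch between what a construction can guarantee and what the obstruction forces that leaves a gap of three, rather than an exact value, in the odd case.

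The main obstacle is the upper-bound construction itself: exhibiting an $n$-element set with subquadratic difference set all of whose $k$-subsets are difference-rich is the heart of the paper, and (as noted) it cannot be any of the standard structured sets. The matching obstruction --- that every set with $\abs{A - A} = o(n^2)$ contains some $2m$-element subset with at most $m^2 = \tfrac14 k^2$ differences --- is the other crux, inherited from the even case: the quick ``popular difference'' pigeonhole only yields $\tfrac{k}{2}$ disjoint pairs sharing a common difference, a configuration with about $\tfrac38 k^2$ differences, so a more delicate extraction of additive structure from $A$ is needed to bring the count down to $\tfrac14 k^2$.
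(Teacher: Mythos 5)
Your lower-bound argument is correct and coincides in substance with the paper's: both reduce the odd case to the even case by dropping one element. What you have re-derived, in this special case, is the general monotonicity inequality $g(n, k, \ell) \geq g(n, k-1, \ell - k + 1)$ (the paper's Fact \ref{fact:monotonicity-k}), which the paper then applies to the $k$-even lower bound with $k - 1 = 2m$. Your count $\abs{S - S} \leq m^2 + 2m = (m+1)^2 - 1 < (m+1)^2 = \frac{(k+1)^2}{4}$ is right, and the conclusion follows.

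The upper bound is where the proposal has a genuine gap. You write that you would ``invoke the paper's main upper-bound theorem'' to produce a set $A$ with $\abs{A - A} = o(n^2)$ in which every $k$-subset spans at least $(m+1)^2 - 4$ differences. But the main upper-bound result, Theorem \ref{thm:upper-bound}, applied with $c = 2$ and odd $k = 2m + 1$, only yields $\ell = \ceil{(k-1)/2}^2 = m^2$, which falls short of the claimed $(m+1)^2 - 4 = m^2 + 2m - 3$ by $2m - 3$. The corollary specifically requires the strengthened bound Proposition \ref{prop:odd-upper-bound}, whose proof is the content of all of Section \ref{sec:odd-upper-bound}: it replaces Lemma \ref{lem:bounding-diffs} with the sharper Lemma \ref{lem:odd-bounding-diffs}, which in turn rests on the technical Lemmas \ref{lem:2s-certify-equality} and \ref{lem:impls-not-i}. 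You correctly sense that ``all of the work here is in the construction,'' but the proposal neither proves nor even precisely cites that stronger bound; as written it substitutes a hope for the one genuinely new ingredient the odd case needs. Your heuristic about hooks is also somewhat off the mark as a description of what drives the gap of three: the paper's random construction avoids all $3$-term arithmetic progressions outright (via Behrend), so hook-type subsets cannot occur at all. The tight configuration in the odd case is instead the AP-free system $\{x_1 + x_{s+1} = \cdots = x_s + x_{2s}, \, x_1 - x_2 - x_3 + x_{2s+1} = 0\}$ described in Remark \ref{rmk:odd-bounding-equality}, and the loss of $3$ in the threshold reflects the three extra repeated differences that the single additional equation forces.
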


We also obtain bounds on the thresholds for other exponents, although these bounds are less tight --- for each $1 < c \leq 2$, we define the \emph{threshold for $\Omega(n^c)$} to be the smallest integer $\ell$ such that $g(n, k, \ell) = \Omega(n^c)$. (The quadratic threshold is the special case of this definition where $c = 2$.) We obtain that for each fixed $c$, the threshold for $\Omega(n^c)$ is quadratic in $k$. 

\begin{corollary}\label{cor:nc-threshold}
    For each $1 < c \leq 2$, there exist constants $0 < a_1 < a_2 < \frac{1}{2}$ (which can be defined as explicit functions of $c$ such that $a_1, a_2 \to 0$ as $c \to 1$) such that the following holds for all sufficiently large $k$: we have $g(n, k, a_1k^2) = o(n^c)$ and $g(n, k, a_2k^2) = \Omega(n^c)$. In particular, the threshold for $\Omega(n^c)$ is between $a_1k^2$ and $a_2k^2$. 
\end{corollary}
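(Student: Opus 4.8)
My plan is to deduce Corollary~\ref{cor:nc-threshold} directly from the two general families of bounds established in this paper. I expect these to take the following shape: a lower-bound family
\[
g\bigl(n, k, \alpha_-(m)\,k^2 + O(k)\bigr) = \Omega\bigl(n^{c_-(m)}\bigr), \qquad g\bigl(n, k, \alpha_+(m)\,k^2 + O(k)\bigr) = o\bigl(n^{c_+(m)}\bigr) \qquad (m = 1, 2, \dots),
\]
where $\alpha_\pm(m) \in (0, \tfrac12)$ and $c_\pm(m) > 1$ are explicit constants, the first members recovering the quadratic regime (so $c_-(1), c_+(1)$ equal or approach $2$), and --- this is the feature the whole deduction rests on --- as $m \to \infty$ the exponents $c_\pm(m)$ decrease to $1$ while the coefficients $\alpha_\pm(m)$ decrease to $0$, so that the families interpolate continuously between Corollary~\ref{cor:even-qt} and the superlinear threshold $g(n, k, k-1) = \Theta(n)$. (A parametrization by a real variable rather than an integer $m$ would serve equally well.) I emphasize that this last property is a genuine strengthening of the earlier lower bounds, since the coefficients appearing in \eqref{eqn:fps} stay above roughly $\tfrac14$; merely quoting Fish--Pohoata--Sheffer would not be enough.

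Granting such families, the corollary is a matter of inverting the maps $m \mapsto c_\pm(m)$. Fix $1 < c \le 2$. For the lower half, let $m$ be the largest index with $c_-(m) \ge c$ (such $m$ exists, since the $c_-(m)$ decrease from essentially $2$ down to $1$) and take $a_2 = a_2(c)$ to be a constant slightly above $\alpha_-(m)$ with $a_2 < \tfrac12$; since $c_-(m) \ge c$ and $g$ is monotone in $\ell$, we get $g(n, k, a_2 k^2) \ge g\bigl(n, k, \alpha_-(m)k^2 + O(k)\bigr) = \Omega(n^{c_-(m)}) = \Omega(n^c)$ for all sufficiently large $k$. For the upper half, choose $m'$ large enough that both $c_+(m') < c$ and $\alpha_+(m') < a_2$ (possible since $c_+(m') \to 1$ and $\alpha_+(m') \to 0$), and take $a_1 = a_1(c)$ slightly below $\alpha_+(m')$; then for large $k$ we have $a_1 k^2 < \alpha_+(m')k^2 + O(k)$, so $g(n, k, a_1 k^2) \le g\bigl(n, k, \alpha_+(m')k^2 + O(k)\bigr) = o(n^{c_+(m')}) = o(n^c)$, and $a_1 < \alpha_+(m') < a_2$. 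Finally, as $c \to 1^+$ the largest admissible index $m$ tends to $\infty$, and so must any valid $m'$, whence $\alpha_-(m), \alpha_+(m') \to 0$ and therefore $a_2, a_1 \to 0$; and every choice above is an explicit function of $c$. This proves the corollary.

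Two routine points need attention. First, the lower-bound family is most naturally stated for $k$ in some divisibility class (compare \eqref{eqn:fps} and \eqref{eqn:li-4-3-bound}); to reach all sufficiently large $k$ I would replace $k$ by the nearest smaller admissible $k' = k - O(1)$ and use that the $(k, \ell)$-local property implies the $(k', \ell - O(k))$-local property --- extending a $k'$-element set to a $k$-element set adjoins $O(1)$ points and hence creates only $O(k)$ new differences --- so that $g(n, k, \ell) \ge g(n, k', \ell - O(k))$, with the $O(k)$ loss absorbed into the slack built into $a_2$. Second, for large $k$ a strict inequality between leading coefficients dominates any $O(k)$ error as well as the passage from $\ell$ to $\ceil{\ell}$, which is exactly why I keep $a_1$ and $a_2$ strictly separated from $\alpha_+(m')$ and $\alpha_-(m)$.

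The substance lies entirely in establishing the two general families, and I expect the main obstacle to be the upper bound, for which there is no precedent in the regime $\ell = \Theta(k^2)$. One must construct, for each small $\eps > 0$, an $n$-element set whose difference set has size only $O(n^{1+\eps})$ in which nonetheless \emph{every} $k$-element subset spans at least $c_\eps\, k^2$ differences for some constant $c_\eps > 0$ (with $c_\eps \to 0$ as $\eps \to 0$). The two demands pull against each other: a small global difference set forces some differences to be very heavily repeated, hence forces long arithmetic-progression-like substructures, and these tend to produce $k$-element subsets with far fewer than $\Theta(k^2)$ differences --- so affine cubes, Behrend-type sets, and plain random subsets all fail, and a genuinely new construction is required. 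On the lower-bound side the analogous difficulty is to locate, inside any set with small difference set, additive configurations that are simultaneously cheap (a $k$-point configuration with fewer than $a_2 k^2$ differences, $a_2$ close to $0$) and unavoidable, which amounts to pushing the affine-cube and translated-block arguments of Li and of Fish--Pohoata--Sheffer to configurations of unbounded complexity while keeping the forced exponent strictly above $1$.
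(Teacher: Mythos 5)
Your deduction is correct and is essentially the paper's own proof: the two families you posit are precisely Theorem~\ref{thm:upper-bound} (coefficient $\bigl(\tfrac{c-1}{c}\bigr)^2$, exponent $c$) and Theorem~\ref{thm:lower-bound} extended to all $k$ via Fact~\ref{fact:monotonicity-k} and Corollary~\ref{cor:lower-for-nonmultiples} (coefficient $\tfrac{3^{t-1}}{4^t}$, exponent $1+\tfrac{1}{2^t-1}$), and your inversion argument --- including the $O(k)$-loss monotonicity transfer to handle divisibility --- is the same as the paper's. The only cosmetic difference is that the paper applies the upper-bound family directly at exponent $c$ (taking $a_1 = \tfrac12(\tfrac{c-1}{c})^2$) rather than at a strictly smaller intermediate exponent $c_+(m')<c$.
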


We can also view our results in terms of bounds on $g(n, k, \ell)$ for given $\ell$ --- when $\ell$ is quadratic in $k$ with leading coefficient less than $\frac{1}{4}$, we obtain both upper and lower bounds on $g(n, k, \ell)$ of the form $n^c$ for $1 < c < 2$ (though not with the same exponent). In particular, these bounds provide the first nontrivial upper bounds on $g(n, k, \ell)$ for values of $\ell$ that are quadratic in $k$ (the previously known upper bounds work with much smaller values of $\ell$). 

\begin{corollary}\label{cor:poly-bounds}
    For each $0 < a < \frac{1}{4}$, there exist $1 < c_1 < c_2 < 2$ (which can be defined as explicit functions of $a$ such that $c_1, c_2 \to 1$ as $a \to 0$) such that for all sufficiently large $k$, we have $g(n, k, ak^2) = \Omega(n^{c_1})$ and $g(n, k, ak^2) = o(n^{c_2})$. 
\end{corollary}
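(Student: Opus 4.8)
The plan is to derive Corollary~\ref{cor:poly-bounds} from the main upper and lower bound theorems of the paper (which we have not yet seen, but whose existence is promised in Section~\ref{subsec:results}), by specializing a family of parametrized bounds to the regime $\ell = ak^2$. The structure should parallel exactly the derivations of Corollaries~\ref{cor:even-qt}, \ref{cor:odd-qt}, and especially \ref{cor:nc-threshold}: there is presumably a lower bound of the shape ``if $\ell$ is at least some expression quadratic in $k$ with leading coefficient $b(c)$, then $g(n,k,\ell) = \Omega(n^c)$,'' and an upper bound of the shape ``if $\ell$ is at most some expression quadratic in $k$ with leading coefficient $b'(c)$, then $g(n,k,\ell) = o(n^c)$,'' where in both cases $b, b' \to \tfrac14$ as $c \to 2$ and $b, b' \to 0$ as $c \to 1$. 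The affine $t$-cube constructions described in the excerpt (Fish--Lund--Sheffer and Li) strongly suggest the upper bound has leading coefficient behaving like $\tfrac14(1 - 2^{-t})$-type expressions with $c$ correspondingly near $\log_2 3$ or $1 + \tfrac{2}{k}$, so the quantitative dependence is explicit and monotonic.

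Concretely, I would first invoke the lower bound theorem: fix $a \in (0, \tfrac14)$, and find the largest $c_1 > 1$ for which the threshold leading coefficient $b(c_1)$ in the lower bound statement satisfies $b(c_1) \le a$ (strictly, with a little room to absorb lower-order terms). Since $b$ is continuous and increasing with $b(c) \to \tfrac14$ as $c \to 2$ and $b(c) \to 0$ as $c \to 1$, such a $c_1 \in (1,2)$ exists, is an explicit function of $a$, and satisfies $c_1 \to 1$ as $a \to 0$. For $k$ large enough, $ak^2$ exceeds the full (not just leading-order) threshold expression evaluated at $c_1$, so $g(n,k,ak^2) = \Omega(n^{c_1})$. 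Symmetrically, I would invoke the upper bound theorem: find the smallest $c_2 < 2$ for which the corresponding leading coefficient $b'(c_2) \ge a$, again using continuity and the limiting behavior to get $c_2 \in (c_1, 2)$ (after possibly shrinking $c_1$ or enlarging $c_2$ slightly so that $c_1 < c_2$ — the gap is guaranteed because the lower and upper bound coefficients at a common exponent need not match, but more robustly one can just take $c_1$ defined via the lower bound and $c_2$ via the upper bound and note $c_1 \to 1, c_2 \to 1$ forces them to straddle appropriately, or simply enlarge $c_2$ by a fixed fraction of its distance to $2$). For $k$ large, $ak^2$ is below the upper-bound threshold at $c_2$, giving $g(n,k,ak^2) = o(n^{c_2})$.

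The main obstacle I anticipate is purely bookkeeping: ensuring $c_1 < c_2$ strictly, and that both can be written as genuinely explicit (closed-form or near-closed-form) functions of $a$ tending to $1$ as $a \to 0$. This requires the leading-coefficient functions $b(\cdot)$ and $b'(\cdot)$ from the two main theorems to be explicit and invertible in an elementary way. If the lower bound is the one underlying Corollary~\ref{cor:nc-threshold} (with $a_2 = b(c)$ there) and the upper bound is the companion one (with $a_1 = b'(c)$), then $c_1 = b^{-1}(a)$ and $c_2 = b'^{-1}(a)$, and the separation $c_1 < c_2$ follows from $a_1 < a_2$ in that corollary, i.e. $b'^{-1} > b^{-1}$ pointwise on $(0,\tfrac14)$. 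So the cleanest route is: state the two main theorems as already proved, set $c_1$ and $c_2$ to be these two inverse functions evaluated at $a$, verify $c_1 < c_2$ from the analogous inequality already established for Corollary~\ref{cor:nc-threshold}, verify $c_1, c_2 \to 1$ as $a \to 0$ from the stated limiting behavior, and then check that for $k$ sufficiently large the lower-order terms in the threshold expressions do not interfere with the inequality $b(c_1) k^2 \lesssim ak^2 \lesssim b'(c_2) k^2$. This last check is routine since $ak^2$ dominates any $O(k)$ correction.

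Thus the proof is essentially a two-line corollary: apply the lower bound theorem with the exponent $c_1$ chosen so its threshold coefficient is just below $a$, and apply the upper bound theorem with the exponent $c_2$ chosen so its threshold coefficient is just above $a$; monotonicity and continuity of these coefficient functions (together with the already-noted inequality between them) give $1 < c_1 < c_2 < 2$ and the limiting behavior, and taking $k$ large absorbs lower-order terms.
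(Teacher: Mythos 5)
Your proposal matches the paper's proof in structure and substance: the lower bound comes from Theorem~\ref{thm:lower-bound} (extended via Corollary~\ref{cor:lower-for-nonmultiples}) with the largest applicable exponent $c_1 = 1 + \tfrac{1}{2^t - 1}$, the upper bound comes from Theorem~\ref{thm:upper-bound} with $c_2$ chosen so that $a < \bigl(\tfrac{c_2 - 1}{c_2}\bigr)^2$, and lower-order terms in both threshold expressions are absorbed by taking $k$ large. The one imprecision --- understandable, working blind --- is that you treat the lower-bound coefficient $b(c)$ as a continuous, invertible function of $c$; in fact Theorem~\ref{thm:lower-bound} is indexed by a positive integer $t$, so $b$ takes only the discrete values $3^{t-1}/4^t$ at $c = 1 + \tfrac{1}{2^t-1}$, and the paper correspondingly defines $c_1$ by taking $t$ to be the \emph{smallest} integer with $a > 3^{t-1}/4^t$ rather than by inverting a continuous map; this makes $c_1$ a step function of $a$ but does not affect the conclusion. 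Also, you spend effort hedging on how to guarantee $c_1 < c_2$, but the paper's observation is simpler than any of the workarounds you sketch: once $g(n,k,ak^2) = \Omega(n^{c_1})$ and $g(n,k,ak^2) = o(n^{c_2})$ both hold, $c_1 < c_2$ is automatic (and then $c_2 \to 1$ as $a \to 0$ forces $c_1 \to 1$ for free).
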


Finally, for each $k$, one can consider the set of `possible exponents' that occur in the asymptotics for $g(n, k, \ell)$ over all $\ell$ --- to make this precise, we define \[S_k = \left\{\liminf_{n \to \infty} \frac{\log g(n, k, \ell)}{\log n} \Bigm\vert 0 \leq \ell \leq \binom{k}{2}\right\}.\] As of now, very little is known about this set, beyond the fact that it contains $1$ for all $k$ and $2$ for all $k \geq 4$. However, we are able to show that $\abs{S_k}$ grows arbitrarily large as $k$ does. 

\begin{corollary}\label{cor:sk-loglogk}
    There exists an absolute constant $a > 0$ such that for all $k \geq 2$, we have $\abs{S_k} \geq a\log\log k$. 
\end{corollary}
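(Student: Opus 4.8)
The plan is to pin $\liminf_{n\to\infty}\frac{\log g(n,k,\ell)}{\log n}$ into a short interval for about $\log\log k$ well-chosen values of $\ell$, with these intervals pairwise disjoint; the corresponding $\liminf$ values are then distinct elements of $S_k$, which gives the bound. Corollary~\ref{cor:poly-bounds} supplies the pinning: for each $0<a<\frac14$ it gives exponents $1<c_1(a)<c_2(a)<2$ with $g(n,k,ak^2)=\Omega(n^{c_1(a)})$ and $g(n,k,ak^2)=o(n^{c_2(a)})$, so writing $\ell(a):=\lceil ak^2 \rceil$ (which is what $g(n,k,ak^2)$ denotes, by the paper's convention, and which lies in $[0,\binom{k}{2}]$ for $a<\frac14$, $k\ge 2$) we get $\liminf_n\frac{\log g(n,k,\ell(a))}{\log n}\in[c_1(a),c_2(a)]$, valid once $k\ge K(a)$ for a threshold $K(a)$ depending on $a$. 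Thus it suffices to produce $a^{(1)}>a^{(2)}>\cdots>a^{(m)}$ in $(0,\frac14)$ with $K(a^{(j)})\le k$ for all $j$ and with the intervals $[c_1(a^{(j)}),c_2(a^{(j)})]$ pairwise disjoint, and then to verify $m\gtrsim\log\log k$.

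I would choose the $a^{(j)}$ greedily, from large to small: start with $a^{(1)}=\frac18$; given $a^{(j)}$, use $c_2(a)\to 1<c_1(a^{(j)})$ as $a\to 0$ to pick $a^{(j+1)}<a^{(j)}$ as large as possible with $c_2(a^{(j+1)})<c_1(a^{(j)})$ and $K(a^{(j+1)})\le k$; stop when no such $a^{(j+1)}$ exists. This forces the intervals to be disjoint, so everything comes down to counting the steps. For that I would unwind the explicit shape of the bounds behind Corollary~\ref{cor:poly-bounds} (the paper's main family of estimates): for small $a$ these come from configurations of $k/2^t$ disjoint congruent affine $t$-cubes --- the cases $t=1,2,\log_2 k-1$ being \eqref{eqn:li-2-bound}, \eqref{eqn:li-4-3-bound}, \eqref{eqn:li-huge-cube}, with lower exponents $2,\frac43,1+\frac{2}{k-2}$ --- so that $t\asymp\log(1/a)$, with $t$ ranging freely over $1\le t\lesssim\log_2 k$, which is exactly the range in which $K(a)\le k$. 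In this regime $c_1(a)-1$ and $c_2(a)-1$ are each comparable to a fixed power of $a$, but --- crucially --- the power attached to $c_1$ is strictly larger (the new upper-bound exponent tends to $1$ strictly more slowly than the old lower-bound exponent). Hence $c_2(a^{(j+1)})<c_1(a^{(j)})$ forces $\log(1/a^{(j+1)})\ge\lambda\log(1/a^{(j)})$ for a fixed $\lambda>1$, so $\log(1/a^{(j)})$ (equivalently $t^{(j)}$) grows geometrically, while the stopping condition only caps it at $\log(1/a^{(j)})\lesssim\log k$. Therefore $\lambda^m\lesssim\log k$, that is $m=\Theta(\log\log k)$; for the finitely many $k$ too small to run this, $\abs{S_k}\ge 1$ suffices once the absolute constant in the statement is shrunk.

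The step I expect to be the main obstacle is exactly the quantitative comparison in the middle of the previous paragraph: extracting from the main theorem the precise rates at which $c_1(a)$ and $c_2(a)$ approach $1$, and verifying that $c_2(a)-1$ is bounded by a \emph{fixed} power of $c_1(a)-1$ (a power strictly less than $1$, so that $\lambda>1$) uniformly over the relevant range of $a$. This uniformity is what makes $\lambda$ a genuine constant and hence forces the count to be $\Theta(\log\log k)$ rather than anything smaller, and it is here that the strength of the paper's new upper bounds is essential --- the previously known upper bounds constrain only $\ell$ linear in $k$, so give nothing in this quadratic regime. If the comparison is a little lossy (a larger but still fixed power) one simply takes a larger $\lambda$, spreading the $a^{(j)}$ further apart and changing $m$ by only a constant factor. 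The remaining points --- confirming $K(a^{(j)})\le k$ along the run, and the bookkeeping for small $k$ and for the edge values of $\ell$ --- are routine.
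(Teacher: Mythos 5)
Your proposal is correct and is essentially the paper's own proof: both pin $e(\ell)=\liminf_n\frac{\log g(n,k,\ell)}{\log n}$ into disjoint intervals by choosing a geometrically spaced family of parameters, the geometric spacing being forced by the mismatch in how fast the two exponent bounds approach $1$ as $\ell/k^2\to 0$ (you route this through Corollary \ref{cor:poly-bounds}, while the paper works directly with $\ell^+(t)$ and $\ell^-(t)$, taking $t_i=10^i$ and verifying the needed interleaving in Claim \ref{claim:10t-for-exps}). The power-law comparison you flag as the main obstacle does work out uniformly: unwinding the proofs gives $c_2(a)-1\asymp a^{1/2}$ while $c_1(a)-1\asymp a^{\log_{4/3}2}$ with $\log_{4/3}2\approx 2.41>1/2$, which is precisely what Claim \ref{claim:10t-for-exps} encodes.
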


We now state our more general theorems from which we deduce these statements. First, we prove the following family of upper bounds for $g(n, k, \ell)$. 

\begin{theorem}\label{thm:upper-bound}
    For all positive integers $k$ and all $1 < c \leq 2$, we have \[g\left(n, k, \ceil{\frac{(c - 1)(k - 1)}{c}}^2\right) = o(n^c).\] 
\end{theorem}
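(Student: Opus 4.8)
The plan is to prove the equivalent statement: there is an $n$-element set $A\subseteq\RR$ with $\abs{A-A}=o(n^c)$ having the $(k,\ell)$-local property, where $\ell=\ceil{(c-1)(k-1)/c}^2$. Write $q=\ceil{(c-1)(k-1)/c}$, so $\ell=q^2$. If $q^2\le k-1$ there is nothing to do: any $n$-term arithmetic progression has the $(k,q^2)$-local property (every $k$-subset spans at least $k-1\ge q^2$ differences) and difference set of size $n-1=o(n^c)$. So assume $q^2>k-1$; in this regime $A$ will come from a probabilistic construction followed by a deletion step.

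\emph{The construction.} Fix a function $\omega=\omega(n)\to\infty$ growing arbitrarily slowly, put $N=\ceil{n^c/\omega(n)}$, and let $A_0\subseteq\{1,\dots,N\}$ be random, including each element independently with probability $p=3n/N=(3+o(1))\omega(n)\,n^{1-c}$ (which is $\le 1$ for large $n$). Then $\EE\abs{A_0}=pN\sim 3n$, so by a Chernoff bound $\abs{A_0}\ge 2n$ with probability close to $1$, while deterministically $\abs{A_0-A_0}\le N-1=o(n^c)$. It remains to destroy the $k$-subsets of $A_0$ that violate the local property; call a $k$-element set $A'\subseteq\{1,\dots,N\}$ \emph{bad} if $\abs{A'-A'}<q^2$.

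\emph{The counting lemma (crux).} The heart of the argument is the estimate: the number of bad $k$-element subsets of $\{1,\dots,N\}$ is at most $C(k)\,N^{q}$, for a constant $C(k)$ depending only on $k$. The idea is that a $k$-set with fewer than $q^2$ differences is very additively structured: one should show — via a Freiman-type analysis, e.g.\ bounding the Freiman dimension $d$ of $A'$ through $\abs{A'-A'}\ge(d+1)\abs{A'}-\binom{d+1}{2}$ together with Freiman's theorem to control a covering generalized arithmetic progression, or by a more hands-on argument tracking how new differences accumulate as one builds $A'$ up one element at a time — that every bad $A'$ is contained in a GAP of dimension at most $q-1$ and size bounded purely in terms of $k$. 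Such a GAP in $\{1,\dots,N\}$ is determined by a base point and at most $q-1$ generators (at most $N^{q}$ choices) plus bounded side-length data, and then $A'$ is one of boundedly many $k$-subsets of it. Pushing the exponent down to exactly $q$ (rather than something like $\sqrt{2}\,q$, which the naive greedy accumulation argument gives, or $q^2$) is what makes the whole scheme work, and I expect this to be the main obstacle.

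\emph{Finishing via deletion.} Granting the lemma, the expected number of bad $k$-subsets of $A_0$ is at most $C(k)N^{q}p^{k}=C(k)3^{k}\,n^{cq-(c-1)k}\,\omega(n)^{k-q}$. The exponent $cq-(c-1)k$ is strictly less than $1$: since $q=\ceil{(c-1)(k-1)/c}<(c-1)(k-1)/c+1$, multiplying by $c$ gives $cq<(c-1)(k-1)+c=(c-1)k+1$. As $\omega(n)^{k-q}=n^{o(1)}$ (note $1\le k-q$), the expected number of bad $k$-subsets of $A_0$ is $o(n)$. Hence by Markov's inequality and a union bound there is an outcome with $\abs{A_0}\ge 2n$ and at most $o(n)$ bad $k$-subsets; deleting one element from each bad $k$-subset leaves a set with $\ge 2n-o(n)\ge n$ elements and no bad $k$-subset, and discarding superfluous elements yields $A$ with $\abs{A}=n$, the $(k,q^2)$-local property, and $\abs{A-A}\le N-1=o(n^c)$, as desired.
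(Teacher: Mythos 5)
Your overall framework — random subset of an interval at density $p \sim n^{1-c}$, then delete one point from each bad $k$-subset — matches the paper's, and your exponent bookkeeping in the deletion step is correct: once you have the counting lemma, $C(k)N^q p^k = n^{cq-(c-1)k+o(1)} = o(n)$ follows exactly as you compute, since $cq - (c-1)k < 1$. You also correctly handle the trivial case $\ell \le k-1$. (A cosmetic difference: the paper samples from a Behrend set instead of $\{1,\dots,N\}$, but for Theorem~\ref{thm:upper-bound} alone this is unnecessary — the paper only needs the AP-free part later, for Proposition~\ref{prop:odd-upper-bound}.)

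The genuine gap is the counting lemma, which you label ``the crux'' and do not prove, and the Freiman-type route you sketch toward it will not deliver the required exponent $q$ on $N$. Concretely: the Freiman dimension of $A'$ is $d-1$ where $d$ is the dimension of the solution space of the system of all difference equalities $A'$ satisfies (the paper's $\dim(\cC)$). The Freiman lower bound on $|A'-A'|$ (counting all $2|A'-A'|+1$ signed differences) is roughly $dk - \binom{d}{2}$, which is \emph{linear} in $k$, whereas what is needed is the \emph{quadratic-in-$d$} bound of Lemma~\ref{lem:bounding-diffs}, namely $|A'-A'| \ge (d-1)^2$ for $d \le k/2+1$. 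These are incomparable, and for $c=2$ the linear bound is strictly too weak: e.g.\ for $k=8$, $q=4$, the Freiman bound only excludes $d\ge 6$ from $|A'-A'|<16$, while the paper's Lemma~\ref{lem:bounding-diffs} gives $d\le 4=q$; more generally for $c=2$ the Freiman bound caps $d$ only around $k-\sqrt{k}$, far above $q\approx k/2$. The Freiman's-theorem/GAP-covering route has the further problem that the doubling constant of $A'$ is $\sim q^2/k$, which grows linearly in $k$, so the covering GAP's dimension coming out of Freiman's theorem is controlled only up to large (and poor) constants in that doubling ratio, not to dimension exactly $q-1$.

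The paper proves the analogue of your counting lemma via a completely different, bespoke linear-algebraic argument. Lemma~\ref{lem:bounding-diffs} is exactly the statement ``$|A'-A'| < q^2 \Rightarrow \dim(\cC) \le q$'' (in contrapositive), and its proof is the ``hands-on accumulation'' argument you allude to: order the coordinates so that the first $d$ are free, then bound the number of repeated differences contributed by each later index $i$. The nontrivial engine is Lemma~\ref{lem:2s-certify}: a valid collection of $s$ linearly independent difference equalities through $x_i$ can produce, in its \emph{span}, nondegenerate difference equalities pairing $x_i$ with at most $2s$ other coordinates — not more, even though the span contains many more difference equalities than the original $s$. Establishing this $2s$ bound (via Lemma~\ref{lem:gen-min-impl} and the blobs/boxes argument) is the real work, and is precisely what closes the gap you identified. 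Without it, the ``naive accumulation'' argument you mention does not give the needed $N^q$, for the reason you suspected.

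So: correct framework, correct reduction, correct deletion computation, but the central counting lemma is stated and not proved, and the Freiman-type approach you propose for it does not reach the required exponent.
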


In order to obtain our lower bound on the quadratic threshold for odd $k$ (as stated in Corollary \ref{cor:odd-qt}), we also prove the following improvement on Theorem \ref{thm:upper-bound} in the case where $c = 2$ and $k$ is odd. 

\begin{proposition}\label{prop:odd-upper-bound}
    For all odd positive integers $k$, we have \[g\left(n, k, \frac{(k + 1)^2}{4} - 4\right) = o(n^2).\] 
\end{proposition}

We obtain Theorem \ref{thm:upper-bound} and Proposition \ref{prop:odd-upper-bound} by using a random construction. To ensure that the constructed set satisfies the $(k, \ell)$-local property for the stated values of $\ell$, we consider all possible `configurations' of equal differences that can be formed by a $k$-element subset. We first show, very roughly, that certain `bad' configurations are expected to occur very few times in the random construction, so using the alteration method, we can ensure that our set only contains `good' configurations. We then show that any $k$-element subset forming a good configuration must contain at least $\ell$ distinct differences. The technical steps of our proof rely on considering the systems of equations associated with such configurations and analyzing their linear algebraic properties. 

We also prove the following family of lower bounds for $g(n, k, \ell)$. 

\begin{theorem}\label{thm:lower-bound}
    For all positive integers $t$ and $k$ with $2^t \mid k$, we have \[g\left(n, k, \frac{3^{t - 1}}{4^t}k^2 + \frac{3^{t - 1} + 1}{2}\right) = \Omega(n^{1 + \frac{1}{2^t - 1}}).\] 
\end{theorem}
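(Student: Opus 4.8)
\textbf{Proof proposal for Theorem \ref{thm:lower-bound}.}

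The plan is to follow the strategy indicated for the known bounds \eqref{eqn:li-2-bound}--\eqref{eqn:li-huge-cube}: show that any $n$-element set $A$ with $\abs{A - A}$ too small must contain $m := k/2^t$ pairwise disjoint \emph{congruent} affine $t$-cubes, and then check that the union of these cubes is a $k$-element subset $A'$ with $\abs{A' - A'}$ strictly less than $\frac{3^{t-1}}{4^t}k^2 + \frac{3^{t-1}+1}{2}$, contradicting the $(k,\ell)$-local property. So the theorem reduces to two largely independent pieces: a ``finding many congruent cubes'' lemma, and a ``counting differences in the union'' computation.

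For the first piece, I would argue by induction on $t$. The base case $t = 1$ is essentially the statement that a set with few differences contains many disjoint congruent ``$1$-cubes,'' i.e.\ many disjoint pairs $\{x, x + d\}$ with a common gap $d$ --- this follows from a pigeonhole/popular-difference argument: if $\abs{A - A} = O(n^{1 + 1/(2^t - 1)})$ with $2^t \ge 2$, then in particular $\abs{A-A} = o(n^2)$, so some difference $d$ is realized $\omega(1)$ times, and greedily extracting disjoint pairs with gap $d$ yields as many as we like (in fact, one needs the realization count to be $\ge 2m - 1$, which holds once $n$ is large). For the inductive step, suppose the claim holds for $t - 1$ whenever the difference set has size $O(n^{1 + 1/(2^{t-1} - 1)})$. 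Given $A$ with $\abs{A - A} = O(n^{1 + 1/(2^t - 1)})$, I want to find a single translation direction $d$ such that the set $B = A \cap (A - d)$ --- whose elements $x$ satisfy $x, x + d \in A$, so that $\{x, x+d\}$ is a $1$-cube inside $A$ --- is still large enough, of size roughly $n^{1 - 1/(2^t-1) \cdot (\ldots)}$, \emph{and} such that $B - B$ is correspondingly small so the inductive hypothesis applies to $B$ with parameter $t - 1$; then each $(t-1)$-cube found in $B$, together with the shift by $d$, gives a $t$-cube in $A$, and disjointness of the $(t-1)$-cubes in $B$ (plus a little care, e.g.\ choosing $d$ generic or passing to a sub-cube structure) yields disjointness of the $t$-cubes in $A$. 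Making the exponents match up is where the precise value $1 + \frac{1}{2^t - 1}$ should come from: writing $2^t - 1 = 2(2^{t-1} - 1) + 1$, a set of size $n$ with $O(n^{1 + 1/(2^t-1)})$ differences has a popular difference $d$ with $\gtrsim n^{1 - 1/(2^t-1)} = n^{(2^{t-1}-1)\cdot(2/(2^t-1))}$ representations, and I expect $\abs{B} \gtrsim n^{2(2^{t-1}-1)/(2^t - 1)}$ with $\abs{B - B} \le \abs{A - A} = O(n^{1 + 1/(2^t-1)})$, so that $\abs{B-B} = O(\abs{B}^{1 + 1/(2^{t-1}-1)})$ exactly, closing the induction. (One should double-check that $B - B$ being small in this sense really does follow --- $B \subseteq A$ gives $\abs{B - B} \le \abs{A - A}$ for free, and then it is just an arithmetic inequality between the two exponents.)

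For the second piece, I need the combinatorial lemma: if $A'$ is the disjoint union of $m$ congruent affine $t$-cubes with common generating differences $d_1, \ldots, d_t$ (and distinct base points), then $\abs{A' - A'} < \frac{3^{t-1}}{4^t}k^2 + \frac{3^{t-1}+1}{2}$ where $k = 2^t m$. A single affine $t$-cube $\{a + \sum \eps_i d_i : \eps_i \in \{0,1\}\}$ has difference set contained in $\{\sum \eta_i d_i : \eta_i \in \{-1,0,1\}\} \setminus \{0\}$ intersected with the positives, which has at most $\frac{3^t - 1}{2}$ elements; the key point is that \emph{all $m$ cubes share the same such set of ``internal'' differences}, contributing only $O(1)$ differences total. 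The remaining differences are ``between'' distinct cubes; if the cubes have base points $a_1, \ldots, a_m$, a difference between cube $i$ and cube $j$ has the form $\abs{(a_i - a_j) + \sum \eta_i d_i}$ with $\eta_i \in \{-1,0,1\}$, so for each of the $\binom{m}{2}$ unordered pairs we get at most $3^t$ values, but actually at most... here I would want to be careful --- the clean bound is that the number of cross-differences is at most $\binom{m}{2} \cdot 3^t$ naively, but we need the leading term $\frac{3^{t-1}}{4^t}k^2 = \frac{3^{t-1}}{4^t}(2^t m)^2 = 3^{t-1} m^2$, which is smaller than $\binom{m}{2} 3^t \approx \frac{3^t}{2} m^2 = \frac{3 \cdot 3^{t-1}}{2} m^2$, so the construction must be chosen more cleverly than ``arbitrary base points.'' I would choose the base points themselves to be spread out geometrically (e.g.\ $a_j = j \cdot D$ for a single large common gap $D$, or nested so that the $m$ cubes form a structured family), so that the cross-differences between cube $i$ and cube $j$ depend on $i, j$ essentially only through $j - i$; then the cross-differences are controlled by $(m-1)$ ``levels'' times $3^t$ sign-patterns, giving $\le (m - 1) 3^t + O(1)$, which is only linear in $m$ and hence negligible --- but that would make $\ell$ only linear in $k$, contradicting the quadratic form in the statement. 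So the correct construction is the intermediate one where the base-point gaps are comparable to the $d_i$'s: the cubes are arranged so that the \emph{full} configuration $A'$ is itself close to an affine $(t + \log_2 m)$-cube or a ``thickened'' cube, and one computes $\abs{A' - A'}$ directly for that specific configuration. I expect the honest computation to give exactly $\frac{3^{t-1}}{4^t}k^2 + \frac{3^{t-1}+1}{2}$ as the difference-set size of the extremal such $A'$ (matching, for $t = 1$, the value $\frac{3}{8}k^2 - \ldots$ --- wait, the constants differ slightly from \eqref{eqn:li-2-bound}, so the construction here is genuinely a refinement), and the strict inequality in the local-property violation then forces $\abs{A-A}$ large.

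The main obstacle will be the inductive step in the first piece --- specifically, simultaneously controlling the \emph{size} of $B = A \cap (A - d)$ and the \emph{disjointness} of the resulting $t$-cubes, while keeping the exponent bookkeeping tight enough that no loss accumulates over the $t$ levels of induction. A naive greedy extraction of disjoint cubes loses constant (or worse) factors at each level, which is fine for the $\Omega(\cdot)$ conclusion but one must verify these losses don't interact badly with the requirement $2^t \mid k$ and the exact count $k = 2^t m$; and choosing the popular difference $d$ so that $A \cap (A-d)$ is simultaneously large enough and ``generic enough'' for the sub-cubes in $B$ to lift to disjoint cubes in $A$ is the delicate point. A secondary subtlety is pinning down the exact extremal configuration in the second piece so that the additive constant $\frac{3^{t-1}+1}{2}$ comes out correctly rather than just up to $O(1)$; since the theorem only claims a lower bound of the stated form, an $O(1)$ slack there is actually harmless, so I would not belabor it.
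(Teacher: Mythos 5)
You correctly identified the overall shape of the argument (find a structured $k$-element subset with few differences, count its differences, contradict the local property), and your exponent bookkeeping in the inductive step — taking a popular difference $d$, passing to $B = A \cap (A - d)$, and verifying that $\abs{B - B} = O(\abs{B}^{1 + 1/(2^{t-1} - 1)})$ — matches the paper's Lemma \ref{lem:find-cube} and is sound. But the structure you propose to find, namely $m = k/2^t$ disjoint congruent affine $t$-cubes, is exactly Li's configuration, and you yourself flagged the resulting arithmetic mismatch: that structure gives leading coefficient $\frac{1}{2}(3/4)^t$ in $\ell$ (e.g.\ $\frac{3}{8}$ for $t = 1$, matching \eqref{eqn:li-2-bound}), not the theorem's $\frac{3^{t-1}}{4^t}$ (e.g.\ $\frac{1}{4}$ for $t = 1$), which is smaller by a factor of $\frac{2}{3}$. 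Your attempts to close this gap by rearranging base points (geometric spacing, thickened cubes) either degrade to linear $\ell$ or don't obviously achieve the claimed quadratic coefficient, and you did not resolve it.

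The missing idea is to change the \emph{bottom} level of the structure, not its geometry. Rather than $m$ congruent $t$-cubes (equivalently: $2m$ congruent $(t-1)$-cubes whose centers form $m$ pairs with a common \emph{difference}), the paper finds $2s$ congruent $(t-1)$-cubes whose centers $a_1, \ldots, a_{2s}$ satisfy $a_1 + a_{s+1} = \cdots = a_s + a_{2s}$ — i.e.\ form $s$ pairs with a common \emph{sum}, where $s = k/2^t$. The difference count for this configuration is $\frac{3^{t-1}(2s^2 + 1) - 1}{2} = 3^{t-1}s^2 + \frac{3^{t-1}-1}{2}$ (using that $2s$ numbers satisfying the sum condition have at most $s^2$ distinct positive differences, an equality case of Lemma \ref{lem:bounding-diffs}), which is $\ell - 1$ for the $\ell$ in the statement. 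To \emph{find} such a sum-aligned configuration in a set with few differences, the base case of the induction cannot use a popular-difference pigeonhole argument (that only yields pairs with a common gap); it instead requires showing that a set with few differences has a \emph{popular sum}. This is Lemma \ref{lem:repeated-sums}, which is a Cauchy–Schwarz / additive-energy inequality:
\[
\max_c \#\{(a,b) \in A^2 : a + b = c\} \cdot \#\{a - b : (a,b) \in A^2\} \geq n^2.
\]
This lemma is the genuinely new ingredient relative to Li's argument, and it is what your proposal lacks. The upper levels of your induction (take a popular difference $d_{t-1}$, pass to the disjoint-pairs set $A'$, recurse) are essentially identical to the paper's and correct as described; only the base case — and, consequently, the shape of the configuration whose differences are counted in the second piece — needs to be replaced.
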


Using certain straightforward monotonicity properties of $g(n, k, \ell)$, we can extend Theorem \ref{thm:lower-bound} to all values of $k$ (not just multiples of $2^t$), with an adjustment in the value of $\ell$ that is linear in $k$; we state this extension in Corollary \ref{cor:lower-for-nonmultiples}.

We obtain Theorem \ref{thm:lower-bound} by showing that a set with small difference set must contain a \emph{specific} $k$-element configuration that has few distinct differences. This idea is similar to Li's proofs of \eqref{eqn:li-2-bound}, \eqref{eqn:li-4-3-bound}, and \eqref{eqn:li-huge-cube}; instead of using a configuration consisting of $\frac{k}{2^t}$ congruent affine $t$-cubes, we use one consisting of $\frac{k}{2^{t - 1}}$ congruent affine $(t - 1)$-cubes whose centers form $\frac{k}{2^t}$ pairs with equal sums. (In particular, to obtain the lower bound of $\Omega(n^2)$, we use a configuration consisting of $\frac{k}{2}$ pairs with equal sums instead of $\frac{k}{2}$ pairs with equal differences.) If we think of $t$ as fixed and $k$ as large, this improves the value of $\ell$ in the bounds by a factor of around $\frac{2}{3}$.  

If we consider $c$ (in Theorem \ref{thm:upper-bound}) and $t$ (in Theorem \ref{thm:lower-bound}) to be fixed and $k$ to be large, then Theorems \ref{thm:upper-bound} and \ref{thm:lower-bound} provide upper and lower bounds on $g(n, k, \ell)$ with constant exponents of $n$ (not depending on $k$), for values of $\ell$ that are quadratic in $k$ (with coefficients depending on the exponent in the bound). The relationships between the leading coefficient in $\ell$ and the exponent of $n$ in our two bounds are illustrated in Figure \ref{fig:bounds-graph}, with the blue line representing the upper bounds and the purple line representing the lower bounds. Although there is a gap between the two lines below the point $(\frac{1}{4}, 2)$, both approach the point $(0, 1)$ --- for the blue line, this is because we are able to obtain an upper bound of $n^c$ for every $1 < c \leq 2$, and the corresponding leading coefficient $(\frac{c - 1}{c})^2$ tends to $0$ as $c \to 1$; for the purple line, this is because in our lower bounds, the leading coefficient $\frac{3^{t - 1}}{4^t}$ tends to $0$ as $t \to \infty$, and the corresponding exponent $1 + \frac{1}{2^t - 1}$ tends to $1$. Very roughly, this feature of the graph, along with the fact that such a graph can be made in the first place, is what allows us to deduce Corollaries \ref{cor:nc-threshold} and \ref{cor:poly-bounds}.

\begin{figure}[ht]
    \centering 
    \begin{tikzpicture}[scale = 1.1]
        
        \fill [DarkSlateGray3!15] (12, 3.75) -- (6, 3.75) [domain = 2 : 1, samples = 20] plot({24 * (\x - 1)^2/(\x^2)}, {3.5*(\x - 1) + 0.25}) -- (0, 0) -- (12, 0) -- (12, 3.75);

        \foreach \i in {2, ..., 10} {;
            \fill [MediumPurple3!50!DarkSlateGray3!30] (0, {3.5/(2^\i - 1) + 0.25}) -- ({24/3*(3/4)^(\i - 1)}, {3.5/(2^\i - 1) + 0.25}) -- ({24/3*(3/4)^(\i - 1)}, 3.75) -- (0, 3.75) -- cycle;
        }
        \fill [MediumPurple3!50!DarkSlateGray3!30]  ({24/3*(3/4)^10}, {3.5/(2^10 - 1) + 0.25}) -- (0, 0.25) -- (0, 3.75) -- ({24/3*(3/4)^10}, 3.75) -- cycle;
        \draw [MediumPurple3, very thick] ({24/3*(3/4)^10}, {3.5/(2^10 - 1) + 0.25}) -- (0, 0.25);
        \foreach \i in {2, ..., 10} {
            \draw [MediumPurple3, very thick] ({24/3*(3/4)^\i}, {3.5/(2^\i - 1) + 0.25}) -- ({24/3*(3/4)^(\i - 1)}, {3.5/(2^\i - 1) + 0.25}) -- ({24/3*(3/4)^(\i - 1)}, {7/(2^\i - 2) + 0.25});
        }
        \draw [MediumPurple3, very thick] (12, 3.75) -- (6, 3.75);

        \fill [MediumPurple3!15] (6, 3.75) [domain = 2 : 1, samples = 20] plot({24 * (\x - 1)^2/(\x^2)}, {3.5*(\x - 1) + 0.25}) -- (0, 3.75) -- (6, 3.75);
        \draw [DarkSlateGray3, very thick] (12, 3.78) -- (6, 3.78);
        \draw [DarkSlateGray3!87!MediumPurple3, very thick] [domain = 2 : 1, samples = 20] plot({24 * (\x - 1)^2/(\x^2)}, {3.5*(\x - 1) + 0.25});

        \draw [Latex-Latex, gray] (-0.5, 0) -- (12.5, 0) node [gray, anchor = west] {$\frac{\ell}{k^2}$};
        \draw [Latex-Latex, gray] (0, -0.5) -- (0, 4.25) node [gray, anchor = south] {$c$};
        
        \begin{scope}
            \foreach \i\j in {1/2, 1/4, 3/16, 9/64, 27/256} {
                \draw [gray] (24*\i/\j, 0.15) -- (24*\i/\j, -0.15) node [anchor = north] {$\frac{\i}{\j}$};
            }
            \foreach \i in {2, 1} {
                \draw [gray] (0.15, {3.5*(\i - 1) + 0.25}) -- (-0.15, {3.5*(\i - 1) + 0.25}) node [anchor = east] {$\i$};
            }
            \foreach \i\j in {4/3, 8/7} {
                \draw [gray] (0.15, {3.5*(\i/\j - 1) + 0.25}) -- (-0.15, {3.5*(\i/\j - 1) + 0.25}) node [anchor = east] {$\frac{\i}{\j}$};
            }
            \draw [gray, dashed] (0, 0.25) -- (12, 0.25);
            \draw [gray, dashed] (0, 3.75) -- (6, 3.75);
            \draw [gray, dashed] (12, 0) -- (12, 3.75);
        \end{scope}
    \end{tikzpicture}
    \caption{This graph depicts the relationship between the coefficients of $k^2$ in $\ell$ (on the $x$-axis) and the exponents $c$ (on the $y$-axis) for which we can bound $g(n, k, \ell)$ above or below by $n^c$. The blue line shows the upper bounds in Theorem \ref{thm:upper-bound}, and the purple line shows the lower bounds in Theorem \ref{thm:lower-bound}.}
    \label{fig:bounds-graph}
\end{figure}
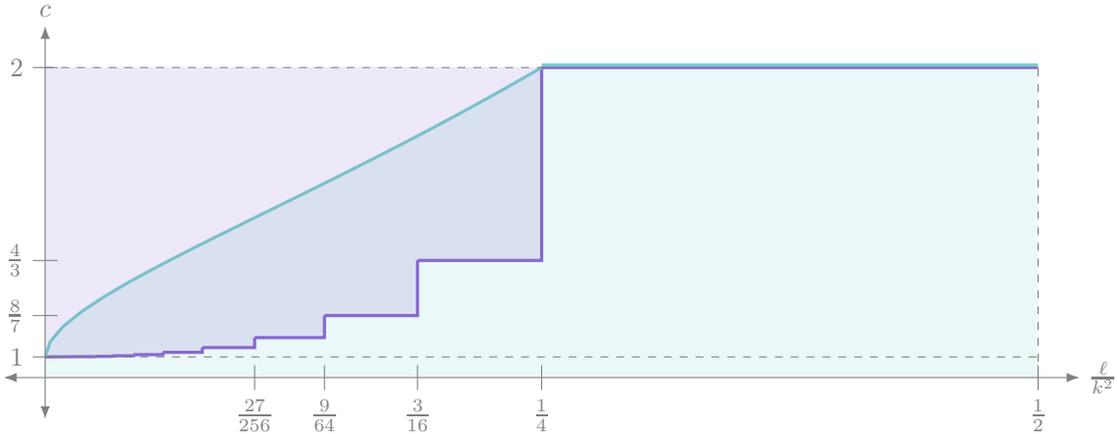

In Section \ref{sec:upper-bound}, we prove Theorem \ref{thm:upper-bound}. In Section \ref{sec:odd-upper-bound}, we prove Proposition \ref{prop:odd-upper-bound}, by strengthening the previous proof in the case where $c = 2$ and $k$ is odd. In Section \ref{sec:lower-bound}, we prove Theorem \ref{thm:lower-bound}. Finally, in Section \ref{sec:consequences}, we deduce Corollaries \ref{cor:even-qt}, \ref{cor:odd-qt}, \ref{cor:nc-threshold}, \ref{cor:poly-bounds}, and \ref{cor:sk-loglogk}.

\section{Proof of Theorem \ref{thm:upper-bound}: A family of upper bounds on \texorpdfstring{$g(n, k, \ell)$}{g(n, k, l)}} \label{sec:upper-bound}

In this section, we prove Theorem \ref{thm:upper-bound} (our family of upper bounds). The structure of the proof is as follows: let $\ell$ be as in Theorem \ref{thm:upper-bound}. In order to show that $g(n, k, \ell) = o(n^c)$, we need to show that for all $a > 0$, for all sufficiently large $n$ we have $g(n, k, \ell) \leq an^c$ --- i.e., there exists an $n$-element set $A \subseteq \RR$ with $\abs{A - A} \leq an^c$ satisfying the $(k, \ell)$-local property. We will obtain such a set $A$ using a random construction. In order to ensure that our set $A$ satisfies the $(k, \ell)$-local property, for every $k$-element subset $A' \subseteq A$, we consider the `configuration' of equal differences formed by its elements. Our argument will then consist of two steps: first, we show that using a random construction, we can obtain an $n$-element set $A$ with $\abs{A - A} \leq an^c$ that avoids all `bad' configurations (where we classify configurations as good or bad based on certain properties). We then show that any $k$-element subset forming a `good' configuration necessarily contains at least $\ell$ distinct differences. It then follows that the set $A$ we constructed must satisfy the $(k, \ell)$-local property, providing the desired upper bound. 

In Subsection \ref{subsec:setup}, we set up a few useful definitions (in particular, we formalize the notion of a configuration of equal differences) and state these two steps more precisely (with definitions of which configurations we consider good and bad). In Subsection \ref{subsec:random-construct}, we describe our random construction and prove that it avoids all bad configurations. In Subsection \ref{subsec:bounding-diffs}, we prove that any subset forming a good configuration contains at least $\ell$ distinct differences, modulo a certain technical lemma; in Subsection \ref{subsec:2s-certify}, we prove this lemma. 

\subsection{Setup} \label{subsec:setup} 

We formalize the notion of a configuration of equal differences in the following way: we define a \emph{$k$-configuration} to be a collection of linear equations over $\RR$ in the $k$ variables $x_1$, \ldots, $x_k$, in which each equation is of the form \begin{align}
    x_{i_1} - x_{i_2} - x_{i_3} + x_{i_4} = 0 \label{eqn:generic-diffeq}
\end{align} 
for some (not necessarily distinct) indices $i_1, i_2, i_3, i_4 \in \{1, \ldots, k\}$ such that $\{i_1, i_4\} \neq \{i_2, i_3\}$. We refer to such an equation as a \emph{difference equality}. (The name, as well as the reason we care about such equations, comes from the fact that \eqref{eqn:generic-diffeq} is a rearrangement of $x_{i_1} - x_{i_2} = x_{i_3} - x_{i_4}$. The condition $\{i_1, i_4\} \neq \{i_2, i_3\}$ means that we only consider equations that are not identically satisfied.) 

In the remainder of this subsection, we establish some useful definitions regarding $k$-configurations; using these definitions, we then state our two main steps more precisely, as Lemmas \ref{lem:random-construct} and \ref{lem:bounding-diffs}. 

\subsubsection{Conventions} 

We first fix a few conventions regarding linear equations (working over the field $\RR$). 

We consider equations to be unique only up to rearrangement; for example, we consider \eqref{eqn:generic-diffeq} to be the same equation as $-x_{i_1} + x_{i_2} + x_{i_3} - x_{i_4} = 0$, $x_{i_1} - x_{i_2} = x_{i_3} - x_{i_4}$, and $x_{i_1} + x_{i_4} = x_{i_2} + x_{i_3}$. 

Given a linear equation $(*)$ in $x_1$, \ldots, $x_k$, we define its \emph{content} to be an expression $*$ (which is a linear combination of $x_1$, \ldots, $x_k$) such that $(*)$ is the equation ${*} = 0$. (For any nontrivial equation $(*)$, there are two ways to define the content of $(*)$, where each is the negative of the other; we choose one arbitrarily.) For example, the content of the difference equality \eqref{eqn:generic-diffeq} is either $x_{i_1} - x_{i_2} - x_{i_3} + x_{i_4}$ or its negative.

We say that a variable $x_i$ \emph{appears} in a linear equation $(*)$, or that $(*)$ \emph{involves} $x_i$, if the coefficient of $x_i$ in $(*)$ is nonzero. (For example, the equation $x_1 - x_2 = 0$ involves $x_1$ and $x_2$, but does not involve $x_3$; this remains the case even if we write it as $x_1 - x_2 - x_3 + x_3 = 0$.)  We also use the same terminology for linear expressions $*$ in place of linear equations $(*)$. Given a set $I \subseteq \{1, \ldots, k\}$, we say $(*)$ is an \emph{equation on $I$} if all variables $x_i$ appearing in $(*)$ satisfy $i \in I$. 

We say linear equations $(*_1)$, \ldots, $(*_t)$ are \emph{linearly independent} if their contents $*_1$, \ldots, $*_t$ are linearly independent (viewed as elements of the $k$-dimensional $\RR$-vector space with basis vectors $x_1$, \ldots, $x_k$). 

Given linear equations $(*_1)$, \ldots, $(*_t)$, $(*)$ with contents $*_1$, \ldots, $*_t$, $*$, we say that the collection $\cT = \{(*_1), \ldots, (*_t)\}$ \emph{implies} $(*)$ if $*$ can be written as a linear combination of $*_1$, \ldots, $*_t$, i.e., \[{*} = \eps_1{*_1} + \cdots + \eps_t{*_t}\] for some $\eps_1, \ldots, \eps_t \in \RR$. (The reason for this terminology is that this is equivalent to the condition that every solution to $\cT$ (viewed as a system of equations) is also a solution to $(*)$; however, we will not need this fact.) We say $\cT$ \emph{minimally implies} $(*)$ if $\cT$ implies $(*)$, but no proper subset $\cT' \subset \cT$ does. (Equivalently, $\cT$ minimally implies $(*)$ if and only if $(*_1)$, \ldots, $(*_t)$ are linearly independent and the coefficients $\eps_1$, \ldots, $\eps_t$ above are all nonzero.) 

We say two collections of linear equations $\cC_1$ and $\cC_2$ are \emph{equivalent} if $\cC_1$ implies every equation in $\cC_2$, and $\cC_2$ implies every equation in $\cC_1$. 

\subsubsection{Definitions for \texorpdfstring{$k$}{k}-configurations}

For a $k$-configuration $\cC$, we let $\Sol(\cC) \subseteq \RR^k$ denote the set of solutions to $\cC$ (where we view $\cC$ as a system of linear equations over $\RR$). For each $I \subseteq \{1, \ldots, k\}$, we let $\Sol_I(\cC)$ denote the projection of $\Sol(\cC)$ onto the coordinates in $I$ --- in other words, we define \[\Sol_I(\cC) = \{(x_i)_{i \in I} \mid (x_1, \ldots, x_k) \in \Sol(\cC)\}.\] We refer to the elements of $\Sol_I(\cC)$ as \emph{$I$-solutions to $\cC$}. 

We define the \emph{dimension} of $\cC$, denoted $\dim(\cC)$, to be the dimension of $\Sol(\cC)$ as a $\RR$-vector space; similarly, for each $I \subseteq \{1, \ldots, k\}$, we define $\dim_I(\cC)$ to be the dimension of $\Sol_I(\cC)$. 

We say a solution to $\cC$ is \emph{generic} if it does not satisfy any difference equalities other than the ones implied by $\cC$. Of course, any solution to $\cC$ must satisfy all equations implied by $\cC$, so a generic solution to $\cC$ satisfies all the difference equalities implied by $\cC$ and no others. (This definition will be used to encapsulate what we mean by a $k$-element set `forming' a configuration of equal differences, as stated in the outline in the beginning of Section \ref{sec:upper-bound}.)

We define an \emph{occurrence} of a $k$-configuration $\cC$ in a set $A \subseteq \RR$ to be a solution to $\cC$ consisting of \emph{distinct} elements of $A$; if there is an occurrence of $\cC$ in $A$, we say $\cC$ \emph{occurs} in $A$. For $I \subseteq \{1, \ldots, k\}$, we define an \emph{$I$-occurrence} of $\cC$ to be an $I$-solution to $\cC$ consisting of distinct elements of $A$. 

\begin{example}
    We illustrate these definitions for the $6$-configuration \[\cC = \{x_1 - x_2 - x_3 + x_4 = 0, \, x_1 - x_2 - x_5 + x_6 = 0\}.\]
    \begin{itemize}
        \item We have $\dim(\cC) = 4$ --- to obtain a solution to $\cC$, we can choose $x_1$, $x_2$, $x_3$, and $x_5$ arbitrarily, and then $x_4$ and $x_6$ are uniquely determined. For the same reason, we have $\dim_{\{1, 2, 3, 5\}}(\cC) = 4$ and $\dim_{\{1, 2, 3, 4\}}(\cC) = 3$ (as $x_4$ is uniquely determined by $x_1$, $x_2$, and $x_3$). Similarly, we have $\dim_{\{3, 4, 5, 6\}}(\cC) = 3$ --- this is because $\cC$ implies $x_3 - x_4 - x_5 + x_6 = 0$, so we can choose $x_3$, $x_4$, and $x_5$ arbitrarily, and then $x_6$ is uniquely determined. 
        \item The $6$-tuple $(1, 2, 4, 5, 9, 10)$ is a solution to $\cC$, but it is not generic, as it satisfies the additional difference equality $x_1 - x_4 = x_4 - x_5$. By contrast, $(1, 2, 4, 5, 10, 11)$ is a generic solution to $\cC$.  
        \item Consider the set $A = \{1, 2, 4, 5, 9, 10, 11\}$. Then $(1, 2, 4, 5, 9, 10)$ and $(1, 2, 4, 5, 10, 11)$ are both occurrences of $\cC$ in $A$. Any $4$-tuple of distinct elements of $A$ is a $\{1, 2, 3, 5\}$-occurrence of $\cC$ in $A$, and $(1, 10, 2, 11)$ is a $\{1, 2, 3, 4\}$-occurrence and a $\{3, 4, 5, 6\}$-occurrence of $\cC$ in $A$. 
    \end{itemize}
\end{example}

We are now ready to define the properties of a $k$-configuration that determine whether we consider them `good' or `bad' (as in the outline of our argument given at the beginning of Section \ref{sec:upper-bound}). 

First, we say that a $k$-configuration is \emph{invalid} if it implies $x_{i_1} - x_{i_2} = 0$ for some $i_1 \neq i_2$ (i.e., it implies two variables are equal), and \emph{valid} otherwise. Note that an invalid $k$-configuration can never occur in a set $A$ (because of the distinctness condition in the definition of an occurrence). Also note that in a generic solution to a valid $k$-configuration, all $k$ entries must be distinct. 

We say a $k$-configuration is \emph{AP-containing} if it implies $x_{i_1} - 2x_{i_2} + x_{i_3} = 0$ for some distinct $i_1$, $i_2$, $i_3$ (i.e., it implies three variables form an arithmetic progression), and \emph{AP-free} otherwise. 

Most importantly, given any $1 < c \leq 2$, we say a $k$-configuration $\cC$ is \emph{$c$-heavy} if there exists some $1 \leq m \leq k$ and some $m$-element subset $I \subseteq \{1, \ldots, k\}$ such that \[\dim_I(\cC) < \frac{(c - 1)m + 1}{c},\] and we call such a subset $I$ a \emph{$c$-heavy part} of $\cC$; we say $\cC$ is \emph{$c$-light} if it is not $c$-heavy. (This definition was chosen in order to make the expected value calculation in the proof of Claim \ref{claim:heavy-occs} work out.)

\begin{remark}\label{rmk:c-light-alt}
    Note that if $\cC$ is $c$-light, then for any $I \subseteq \{1, \ldots, k\}$ of size $\abs{I} = m$, any collection of linearly independent equations on $I$ implied by $\cC$ has size at most \[m - \dim_I(\cC) \leq m - \frac{(c - 1)m + 1}{c} = \frac{m - 1}{c}.\] Equivalently, any collection of $t$ linearly independent equations implied by $\cC$ must involve at least $ct + 1$ variables. We will use this observation frequently in our proof of Proposition \ref{prop:odd-upper-bound} (with $c = 2$). 
\end{remark}

Finally, we say a $k$-configuration $\cC$ is \emph{$c$-good} if it is valid, AP-free, and $c$-light; we say it is \emph{$c$-bad} otherwise. Note that if a $k$-configuration $\cC$ has any of these three properties (i.e., is valid, AP-free, or $c$-light), then any collection of difference equalities implied by $\cC$ has the same property. 

\begin{example}\label{ex:2-good-illustrate}
    To illustrate these definitions, we consider a few examples with $c = 2$. 
    \begin{enumerate}[(a)]
        \item The $4$-configuration $\{x_1 - x_2 - x_3 + x_4 = 0, \, x_1 - x_2 + x_3 - x_4 = 0\}$ is invalid, because it implies $x_3 = x_4$. (It is also $2$-heavy, with both $\{3, 4\}$ and $\{1, 2, 3, 4\}$ as $2$-heavy parts.) 
        \item The $5$-configuration $\{x_1 - x_2 - x_3 + x_4 = 0, \, x_1 + x_2 - x_3 - x_5 = 0\}$ is valid and $2$-light, but it is AP-containing (as it implies $x_5 - 2x_2 + x_4 = 0$) and therefore $2$-bad. 
        \item The $12$-configuration $\{x_1 - x_2 - x_3 + x_4 = 0, \, x_1 - x_2 - x_5 + x_6 = 0, \, x_1 - x_2 - x_7 + x_8 = 0, \, x_1 - x_3 - x_5 + x_7 = 0, \, x_9 - x_{10} - x_{11} + x_{12} = 0\}$ is valid and AP-free, but it is $2$-heavy with $2$-heavy part $\{1, \ldots, 8\}$ (as it contains $4$ linearly independent equations on $\{1, \ldots, 8\}$), and therefore $2$-bad. 
        \item \label{item:ex-2-good} The $6$-configuration $\cC = \{x_1 - x_2 - x_5 + x_4 = 0, \, x_1 - x_3 - x_6 + x_4 = 0\}$ --- equivalently, $\{x_1 + x_4 = x_2 + x_5 = x_3 + x_6\}$ --- is $2$-good, as it is valid, AP-free, and $2$-light. The fact that it is $2$-light can be seen by pairing $x_1$ and $x_4$, $x_2$ and $x_5$, and $x_3$ and $x_6$ --- we can obtain the solutions to $\cC$ by freely choosing the values of both variables in one pair and one variable in each of the other pairs (then the values of the remaining two variables are uniquely determined), and for any $I \subseteq \{1, \ldots, 6\}$ of size $m \geq 1$, this gives a way of freely choosing the values of more than half of the variables $x_i$ with indices $i \in I$, thus showing that $\dim_I(\cC) \geq \frac{m + 1}{2}$. 
    \end{enumerate}
\end{example}

\subsubsection{Our main steps}

Finally, we state the results of the two main steps of our argument as lemmas and explain why they imply Theorem \ref{thm:upper-bound}. 

The following lemma will be the result of the first step of our argument, where we use a random construction to obtain a set $A$ with few distinct differences that `avoids' all $c$-bad $k$-configurations.

\begin{lemma} \label{lem:random-construct}
    Fix a positive integer $k$ and real numbers $1 < c \leq 2$ and $a > 0$. Then for all sufficiently large $n$ (with respect to $k$, $c$, and $a$), there exists an $n$-element set $A \subseteq \RR$ with $\abs{A - A} \leq an^c$ such that every $k$-configuration that occurs in $A$ is $c$-good. 
\end{lemma}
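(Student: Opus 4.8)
The plan is to realize the set $A$ as a random subset of a box $\{1, 2, \ldots, N\}$, where $N$ is chosen so that the difference-set bound $\abs{A-A} \le an^c$ is automatic. Concretely, fix $\delta > 0$ small (to be optimized), set $N = \lceil n^{1 + \delta} \rceil$ or thereabouts, and let $B$ be a random $m$-element subset of $\{1, \ldots, N\}$ for a suitable $m$ slightly larger than $n$; then $\abs{B - B} \le N - 1 \le n^{1+\delta}$, and if $1 + \delta \le c$ this is at most $n^c$ (for $n$ large, absorbing the constant $a$). The point of taking $m$ a bit larger than $n$ is the alteration method: we will delete one element from each occurrence of a $c$-bad $k$-configuration, and we need to argue the expected number of such occurrences is $o(n)$, so that after deletion we still have at least $n$ elements and can pass to an $n$-element subset (noting that every configuration occurring in a subset also occurs in the original set, so $c$-goodness is inherited downward).

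The core estimate is a union bound over all $c$-bad $k$-configurations $\cC$ (there are only finitely many, depending on $k$) of the expected number of occurrences of $\cC$ in $B$. An occurrence is a $k$-tuple of distinct elements of $B$ satisfying $\cC$, so the number of occurrences is at most the number of solutions to $\cC$ inside $\{1, \ldots, N\}$ with distinct coordinates, times the probability a fixed such tuple lies in $B$, which is $O((m/N)^k)$. The number of solutions to $\cC$ in $\{1, \ldots, N\}$ is $O(N^{\dim(\cC)})$. So the expected count is $O(N^{\dim(\cC)} (m/N)^k) = O(m^k N^{\dim(\cC) - k})$. However, this crude bound is not good enough on its own, because $\dim(\cC)$ can be close to $k$; the improvement comes precisely from $c$-badness. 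If $\cC$ is $c$-bad because it is invalid, it has no occurrences at all. If $\cC$ is $c$-bad because it is AP-containing or $c$-heavy, we want to exploit a $c$-heavy part $I$: restrict attention to the projection onto $I$, so an occurrence of $\cC$ projects to an $I$-occurrence, of which there are $O(N^{\dim_I(\cC)})$, and the remaining $k - m$ coordinates are then confined to a bounded-dimensional affine space, contributing $O(N^{\dim(\cC) - \dim_I(\cC)})$ more (this needs a short argument that projecting away $I$ and then back does not lose too much, i.e. $\dim(\cC) \le \dim_I(\cC) + (k - m)$ trivially, but we want a sharper split — actually the clean way is to bound occurrences of $\cC$ by $O(N^{\dim_I(\cC)} \cdot N^{k - m})$ is wrong; instead bound by choosing the $I$-coordinates freely among $O(N^{\dim_I(\cC)})$ options and the rest among at most $N^{k-m}$ options, giving $O(N^{\dim_I(\cC) + k - m})$). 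Then the expected number of occurrences is $O(m^k N^{\dim_I(\cC) + k - m - k}) = O(m^k N^{\dim_I(\cC) - m})$, and since $I$ is a $c$-heavy part we have $\dim_I(\cC) < \frac{(c-1)m + 1}{c}$, so $\dim_I(\cC) - m < \frac{1 - m}{c}$. With $m \approx n$ and $N \approx n^{1+\delta}$, this is roughly $n^k \cdot n^{(1+\delta)(1-m)/c}$; for $m$ chosen around $ck/(c-1)$ or larger this exponent becomes negative enough to beat $n^k$, in fact $o(n)$. (The AP-containing case reduces to the $c$-heavy case because, as the paper notes after Remark~\ref{rmk:c-light-alt}, being AP-containing forces a $3$-variable dependency, which makes a small $3$-element set a $c$-heavy part for $c \le 2$.)

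The main obstacle — and the place where the precise form of $\ell = \lceil (c-1)(k-1)/c \rceil^2$ and the constant $a$ in the statement must be handled carefully — is choosing the parameters $m$, $N$ (equivalently $\delta$) so that simultaneously: (i) $N \le (an^c)^{1/1}$, i.e. $N - 1 \le a n^c$, which forces $1 + \delta \le c$ and constrains $\delta$; (ii) for \emph{every} $c$-heavy part $I$ of every $c$-bad $\cC$, the expected occurrence count $O(m^k N^{\dim_I(\cC) - m})$ is $o(n)$; and (iii) $m - o(n) \ge n$ so the alteration leaves enough points. The tension is that (ii) wants $N$ large (so the negative exponent $\dim_I(\cC) - m$, which scales like $-m/c$, is amplified) while (i) wants $N$ small. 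Resolving this requires checking that for $1 < c \le 2$ there is a valid window for $\delta \in (0, c-1]$; since $c - 1 > 0$ this window is nonempty, and one then verifies the exponent inequality $k + (1+\delta)(\dim_I(\cC) - m) < 1$ holds for $m$ chosen as a suitable function of $k$ and $c$ (and all $N$ sufficiently large). I would also need to double-check the definition of $c$-heavy is stated with the inequality $\dim_I(\cC) < \frac{(c-1)m+1}{c}$ being \emph{strict}, which gives a little room: since dimensions are integers, strict inequality means $\dim_I(\cC) \le \lceil \frac{(c-1)m+1}{c}\rceil - 1$, and this discreteness is what ultimately makes the union bound close with room to spare. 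Once the parameters are pinned down, the rest is the routine alteration-method bookkeeping: delete a point from each bad occurrence, check the count deleted is $o(n)$ with positive probability (or in expectation, then invoke a first-moment argument), and extract the final $n$-element $A$.
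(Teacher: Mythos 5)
The broad architecture of your proposal — random dense subset of an arithmetic interval, project onto a $c$-heavy part $I$ to count $I$-occurrences as $O(N^{\dim_I(\cC)}(m/N)^{\abs{I}})$, show this is $o(n)$, then delete by the alteration method — is exactly the paper's strategy, and your parameter choice $\delta = c-1$ (so $N \approx n^c$, density $\approx n^{1-c}$) recovers the paper's bound $n^{c\dim_I(\cC) - (c-1)\abs{I}} = o(n)$ using the strict inequality in the $c$-heavy definition. The paper does this with a Bernoulli-$p$ subset of a base set $S$ rather than a uniformly random $m$-subset of $\{1,\ldots,N\}$, a cosmetic difference.

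However, there is a genuine gap in how you propose to handle AP-containing configurations, and it is exactly the step you flagged as needing a check. You claim that an AP-containing $\cC$ has a $3$-element $c$-heavy part. This is false for every $1 < c \le 2$. A single equation $x_{i_1} - 2x_{i_2} + x_{i_3} = 0$ on $I = \{i_1,i_2,i_3\}$ gives $\dim_I(\cC) = 2$, while the $c$-heavy condition for $\abs{I}=3$ demands $\dim_I(\cC) < \frac{3(c-1)+1}{c} = 3 - \frac{2}{c} \le 2$; since $\dim_I(\cC) = 2$, the strict inequality fails. (Forcing $\dim_I(\cC) \le 1$ on a $3$-element set requires two independent difference equalities on those three variables, which forces two of the variables to be equal, i.e., an invalid configuration — so a valid AP-containing configuration never has its AP-triple as a $c$-heavy part.) Concretely, the valid, $2$-light, AP-containing $3$-configuration $\{x_1 - 2x_2 + x_3 = 0\}$ is $c$-bad but escapes your counting entirely: it has $\approx N^2$ solutions in $\{1,\ldots,N\}$, each surviving in $B$ with probability $\approx (m/N)^3$, for an expected count $\approx m^3/N \approx n^{3-(1+\delta)} \ge n$ when $\delta \le 1$. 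The alteration method cannot absorb this.

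The missing idea is Behrend's construction: the paper starts from a $3$-AP-free set $S \subseteq \{1,\ldots,\lceil an^c\rceil\}$ of size $n^{c-o(1)}$ (Theorem~\ref{thm:behrend}) and takes the random subset \emph{inside $S$}. Any subset of $S$ then avoids AP-containing $k$-configurations for free, so the alteration is only needed for $c$-heavy ones, where your projection-onto-heavy-part estimate does close. Working in a generic interval $\{1,\ldots,N\}$ rather than a Behrend set is not a matter of parameter tuning; APs are simply too dense there.
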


The following lemma will be the result of the second step of our argument, in which we show that a generic solution to a $c$-good $k$-configuration contains many distinct differences. In fact, we show more generally that a generic solution to any valid $k$-configuration with high dimension contains many distinct differences. 

\begin{lemma} \label{lem:bounding-diffs}
    Fix a positive integer $k$, let $\cC$ be a valid $k$-configuration, and let $\dim(\cC) = d$. Let $(a_1, \ldots, a_k)$ be a generic solution to $\cC$, and let $A' = \{a_1, \ldots, a_k\}$. Then 
    \begin{align} 
        \abs{A' - A'} \geq \begin{cases} (d - 1)^2 & \text{if $d \leq \frac{k}{2} + 1$} \\ \binom{k}{2} - (k - d)(k - d + 1) & \text{if $d \geq \frac{k}{2} + 1$}.\end{cases} \label{eqn:bounding-diffs-bound}
    \end{align}
\end{lemma}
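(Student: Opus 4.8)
The plan is to prove Lemma \ref{lem:bounding-diffs} by exhibiting, inside $A'$, a large "generalized arithmetic progression"-like structure that forces many distinct differences, and then to count those differences carefully. Concretely, since $\cC$ is valid and $(a_1, \ldots, a_k)$ is generic, the only linear relations with small support satisfied by the $a_i$ are exactly the difference equalities implied by $\cC$; in particular no two $a_i$ coincide. The quantity $d = \dim(\cC)$ measures how many "free parameters" the tuple has. The first step is to choose a well-adapted basis: pick indices so that $a_{j_1}, \ldots, a_{j_d}$ form a transcendence-like basis (a maximal set of coordinates on which $\Sol(\cC)$ projects isomorphically), and write every other $a_i$ as a fixed $\ZZ$-linear (in fact $\{0, \pm 1\}$ with the difference-equality shape) combination of these. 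The genericity hypothesis is what lets us treat the $d$ basis values as "algebraically independent" reals, so that two differences $a_i - a_j$ and $a_{i'} - a_{j'}$ are equal if and only if the corresponding linear combinations of the basis are formally equal — equivalently, if and only if $x_i - x_j = x_{i'} - x_{j'}$ is implied by $\cC$.

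Given that reduction, the problem becomes purely combinatorial/linear-algebraic: $\abs{A' - A'}$ equals the number of equivalence classes of pairs $\{i, j\}$ under the relation "$x_i - x_j = x_{i'} - x_{j'}$ is implied by $\cC$", and we must lower-bound this count in terms of $d$ and $k$. I would handle the two regimes separately. For $d \le \frac{k}{2}+1$: I would isolate $d-1$ "independent" coordinates together with one more, forming a $d$-element subset $S$ on which the induced configuration is trivial (no difference equalities implied among $S$); then all $\binom{d}{2}$ pairwise differences within $S$ are distinct, and moreover — this is the extra factor needed to reach $(d-1)^2$ rather than $\binom{d}{2}$ — one can also add differences between $S$ and $A' \setminus S$ that are not captured, or better, one can find a $d$-term set whose difference set has size at least $(d-1)^2$ by arranging the free coordinates to behave like a "Sidon-type" or dilated configuration; the cleanest route is to note $(d-1)^2 = \binom{d-1}{2} + \binom{d}{2}$ is false, so instead I expect the bound $(d-1)^2$ comes from taking $d-1$ generic differences $a_{j_2}-a_{j_1}, \ldots, a_{j_d}-a_{j_1}$ and observing their pairwise sums/differences give $(d-1)^2$ distinct values (an affine-cube / Sidon argument on $d-1$ algebraically independent reals). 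For $d \ge \frac{k}{2}+1$: here $k - d$ is small, so $\cC$ is "close to" the empty configuration; I would show that each of the $k-d$ "dependent" coordinates can kill at most a bounded number of otherwise-distinct differences, giving $\binom{k}{2} - (k-d)(k-d+1)$ by a direct counting of how many pairs $\{i,j\}$ can be identified via the at most $k-d$ independent difference equalities of $\cC$, each of which identifies differences in a controlled way.

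The main obstacle, I expect, is making the genericity-to-linear-algebra reduction fully rigorous — i.e., proving that for a generic solution, $a_i - a_j = a_{i'} - a_{j'}$ forces $x_i - x_j = x_{i'} - x_{j'}$ to be implied by $\cC$. This is intuitively clear ("generic" was defined precisely to say no unforced difference equality holds), but one must check that $a_i - a_j - a_{i'} + a_{j'} = 0$ is itself a difference equality in the sense of \eqref{eqn:generic-diffeq} (it is, provided $\{i,j'\} \ne \{i',j\}$, and the degenerate case is easy), so genericity applies directly. After that, the combinatorial counting in each regime should be the bulk of the remaining work, and I anticipate it is handled by a short extremal argument — likely the "technical lemma" alluded to in the outline (Subsection \ref{subsec:2s-certify}) does the heavy lifting for the $d \le \frac{k}{2}+1$ case, by certifying that a suitable sub-configuration has enough independent structure to guarantee $(d-1)^2$ distinct differences.
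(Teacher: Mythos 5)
Your high-level reduction is correct and matches the paper's: genericity means $a_i - a_j = a_{i'} - a_{j'}$ if and only if $x_i - x_j - x_{i'} + x_{j'} = 0$ is implied by $\cC$, so $\abs{A' - A'}$ is a purely linear-algebraic quantity, and one starts by choosing (via row reduction) $d$ coordinates on which $\Sol(\cC)$ projects isomorphically. You also correctly anticipate that the technical lemma of Subsection \ref{subsec:2s-certify} does the heavy lifting. However, your plan for actually producing the bound has a genuine gap in the $d \le \frac{k}{2}+1$ regime, and it is worth flagging a small arithmetic slip as well.

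First, the slip: you dismiss the identity $(d-1)^2 = \binom{d-1}{2} + \binom{d}{2}$ as false, but it is true --- $\binom{d-1}{2} + \binom{d}{2} = \tfrac{(d-1)(d-2) + d(d-1)}{2} = (d-1)^2$ --- and this decomposition is exactly how the target quantity arises in the paper. Second, and more substantively, your proposed route to $(d-1)^2$ does not work. Isolating a $d$-element subset on which no difference equalities are implied gives only $\binom{d}{2}$ differences, as you note; but the "Sidon-type" patch you then suggest --- taking pairwise sums of the $d-1$ quantities $a_{j_2}-a_{j_1}, \ldots, a_{j_d}-a_{j_1}$ --- does not produce elements of $A'-A'$: a sum $(a_{j_m}-a_{j_1}) + (a_{j_n}-a_{j_1}) = a_{j_m}+a_{j_n}-2a_{j_1}$ is not, in general, a difference of two elements of $A'$. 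So that patch cannot recover the missing $\binom{d-1}{2}$.

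The missing idea is the paper's uniform counting scheme, which does not restrict to a sub-configuration at all. Order the indices so that $1, \ldots, d$ are the free ones, and scan $i = 1, 2, \ldots, k$; for each $i$, count how many $j < i$ give a "repeated" difference, i.e.\ a difference already seen. A repeated difference at $(i,j)$ forces $\cC$ to imply a difference equality on $\{1,\ldots,i\}$ in which $x_i$ and $x_j$ appear with opposite sign. For $i \le d$ this is impossible (no equations on $\{1,\ldots,d\}$), so $i$ contributes $i-1$ new differences. For $i > d$, $\dim_{\{1,\ldots,i\}}(\cC) = d$ forces any linearly independent family of equations on $\{1,\ldots,i\}$ involving $x_i$ to have size at most $i-d$; Lemma \ref{lem:2s-certify} then says such a family can $i$-certify at most $2(i-d)$ indices $j$, so $i$ contributes at least $\max(i-1-2(i-d),\,0) = \max(2d-i-1,\,0)$ new differences. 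Summing gives $\binom{d}{2} + \binom{d-1}{2} = (d-1)^2$ when $d \le \frac{k}{2}+1$ and $\binom{k}{2} - (k-d)(k-d+1)$ when $d \ge \frac{k}{2}+1$, handling both regimes by the same mechanism. Your description of the $d \ge \frac{k}{2}+1$ case gestures at this ("each dependent coordinate kills a bounded number of differences") but does not supply the quantitative bound $2(i-d)$ per index, which is precisely what Lemma \ref{lem:2s-certify} provides and without which neither case closes.
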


Note that the two expressions in \eqref{eqn:bounding-diffs-bound} are equal at $d = \frac{k}{2} + 1$, and both are (weakly) increasing in $d$; this means that the entire right-hand side of \eqref{eqn:bounding-diffs-bound} is increasing in $d$. 

We will prove these two lemmas in Subsections \ref{subsec:random-construct} and \ref{subsec:bounding-diffs}, respectively. For now, we explain why they together imply Theorem \ref{thm:upper-bound}. 

\begin{proof}[Proof of Theorem \ref{thm:upper-bound}]
    By the comments at the beginning of Section \ref{sec:upper-bound}, it suffices to show that the sets $A$ given by Lemma \ref{lem:random-construct} satisfy the $(k, \ell)$-local property for the value of $\ell$ in Theorem \ref{thm:upper-bound}. To see this, suppose that a set $A$ has the property that every $k$-configuration occurring in $A$ is $c$-good. Then for any $k$-element subset $A' = \{a_1, \ldots, a_k\} \subseteq A$, we can write down the $k$-configuration $\cC$ consisting of \emph{all} the difference equalities satisfied by $(a_1, \ldots, a_k)$. Then $\cC$ occurs in $A$, so it must be $c$-good; in particular, $\cC$ is valid and \[\dim(\cC) \geq \ceil{\frac{(c - 1)k + 1}{c}}\] (by the $c$-light condition applied to the entire set $\{1, \ldots, k\}$, and the fact that $\dim(\cC)$ is an integer). Meanwhile $(a_1, \ldots, a_k)$ is a generic solution to $\cC$ (as every difference equality it satisfies is in fact \emph{contained} in $\cC$), so we can bound $\abs{A' - A'}$ using Lemma \ref{lem:bounding-diffs}. The bound given by \eqref{eqn:bounding-diffs-bound} is increasing in $d$ and we have \[\ceil{\frac{(c - 1)k + 1}{c}} \leq \ceil{\frac{k + 1}{2}} \leq \frac{k}{2} + 1,\] so Lemma \ref{lem:bounding-diffs} gives that \[\abs{A' - A'} \geq \left(\ceil{\frac{(c - 1)k + 1}{c}} - 1\right)^2 = \ceil{\frac{(c - 1)(k - 1)}{c}}^2 = \ell.\] This means $A$ indeed satisfies the $(k, \ell)$-local property, as desired. 
\end{proof}

\begin{remark}
    For this argument, we do not need the full strength of Lemma \ref{lem:random-construct} --- the only parts of the $c$-good condition that we need are that a $c$-good configuration $\cC$ is valid and satisfies $\dim(\cC) \geq \frac{(c - 1)k + 1}{c}$. However, we will need the remaining parts of the $c$-good condition in our proof of Proposition \ref{prop:odd-upper-bound} (and we state Lemma \ref{lem:random-construct} in this way so that we can use it for that proof as well).
\end{remark}

\subsection{Proof of Lemma \ref{lem:random-construct}} \label{subsec:random-construct}

In this subsection, we prove Lemma \ref{lem:random-construct} using a random construction (with the alteration method). Our construction will make use of the following theorem of Behrend. 

\begin{theorem}[Behrend \cite{Beh46}]\label{thm:behrend}
    For every positive integer $n$, there exists a set $S \subseteq \{1, \ldots, n\}$ of size $\abs{S} = n^{1 - o(1)}$ containing no $3$-term arithmetic progression. 
\end{theorem}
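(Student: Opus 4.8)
## Proof proposal for Behrend's theorem

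The plan is to follow Behrend's original pigeonhole-on-spheres argument. The key idea is that a sphere in $\RR^d$ (intersected with a grid) is a strictly convex surface, and strictly convex sets contain no three collinear points, hence no $3$-term arithmetic progression; the task is then to choose parameters so that one such sphere contains many grid points, and to map the resulting set back into $\{1, \ldots, n\}$ via base-$q$ digit expansion without creating new collinearities.

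Concretely, fix integers $d$ and $q$ to be chosen later, and for each $v = (v_1, \ldots, v_d)$ with every $v_i \in \{0, 1, \ldots, q-1\}$, consider the squared norm $\norm{v}^2 = v_1^2 + \cdots + v_d^2$, which lies in $\{0, 1, \ldots, d(q-1)^2\}$. First I would apply pigeonhole: there are $q^d$ vectors and at most $d(q-1)^2 + 1 \le dq^2$ possible values of $\norm{v}^2$, so some value $R$ is attained by a set $T$ of at least $q^d / (dq^2)$ vectors; all of $T$ lies on the sphere of squared radius $R$. Since a Euclidean sphere is strictly convex, $T$ contains no three points in arithmetic progression (if $u, v, w \in T$ with $u + w = 2v$ then $v$ is the midpoint of a chord, forcing $v$ to lie strictly inside the sphere unless $u = v = w$). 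Next I would transport $T$ into the integers: define $\phi(v) = \sum_{i=1}^d v_i (2q)^{i-1}$. Because each digit $v_i < q < 2q$, this is an injection, and moreover it is \emph{linear in a way that respects $3$-APs}: if $\phi(u) + \phi(w) = 2\phi(v)$ with $u,v,w$ having digits in $\{0,\ldots,q-1\}$, then comparing base-$(2q)$ digits (the sums $u_i + w_i$ lie in $\{0,\ldots,2q-2\}$, so there is no carrying) forces $u_i + w_i = 2v_i$ for every $i$, i.e. $u + w = 2v$ in $\RR^d$. Hence $S := \phi(T)$ is a $3$-AP-free subset of $\{0, 1, \ldots, (2q)^d\}$ of size $|T| \ge q^d/(dq^2)$; a harmless shift puts it in $\{1, \ldots, n\}$ with $n = (2q)^d$.

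It remains to optimize $d$ and $q$ against the constraint $(2q)^d \approx n$. Writing $n = (2q)^d$, we get $|S| \ge q^d/(dq^2) = n/(2^d d q^2) = n \cdot 2^{-d}/(d q^2)$, and with $q = \tfrac12 n^{1/d}$ this is $n^{1 - 2/d} \cdot 2^{-d + 2}/d$. Choosing $d = \lceil \sqrt{\log_2 n}\,\rceil$ (roughly), the factor $n^{-2/d} = 2^{-(2/d)\log_2 n} = 2^{-O(\sqrt{\log n})}$ and the factor $2^{-d}/d = 2^{-O(\sqrt{\log n})}$ are both of the form $n^{-o(1)}$, so $|S| \ge n^{1 - o(1)}$, as claimed. (For the statement as given we only need the crude bound $n^{1-o(1)}$, so I would not chase the sharp $\exp(-c\sqrt{\log n})$ constant.)

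The only mild subtlety — and the step I would be most careful about — is the digit-expansion argument showing $\phi$ does not introduce spurious $3$-APs: one must check that the base is large enough ($2q$, not $q$) so that the coordinatewise sums $u_i + w_i \le 2(q-1) < 2q$ never carry, which is exactly what lets one read off $u_i + w_i = 2v_i$ digit by digit. Everything else is routine pigeonhole and parameter bookkeeping. Alternatively, since the excerpt is free to cite \cite{Beh46}, one could simply invoke Behrend's theorem directly; but the self-contained argument above is short enough to include.
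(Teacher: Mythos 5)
The paper does not actually prove this statement: it is cited directly as Theorem 2.2 (Behrend \cite{Beh46}) and used as a black box in the construction for Lemma 2.8. So there is no internal proof to compare against.

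That said, your self-contained proof is correct, and it is the standard Behrend argument. The pigeonhole on the $\le dq^2$ possible squared norms, the use of strict convexity of the sphere to rule out $3$-APs among the lattice points of fixed norm, and the base-$(2q)$ encoding $\phi(v) = \sum_i v_i(2q)^{i-1}$ with the no-carry observation (since $u_i + w_i \le 2(q-1) < 2q$, a relation $\phi(u)+\phi(w)=2\phi(v)$ reads off digit-by-digit as $u+w=2v$) are all exactly right, and you correctly flag the base size as the one place where carelessness would break the argument. The parameter choice $d \approx \sqrt{\log_2 n}$ gives $|S| \ge n \cdot 2^{-O(\sqrt{\log n})} = n^{1-o(1)}$, which is what the statement requires. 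One trivial cleanup: in the strict-convexity step, ``unless $u=v=w$'' should read ``unless $u=w$'' (from which $v=u=w$ follows); and after the shift your set sits in $\{1,\ldots,(2q)^d\}$, so to land in $\{1,\ldots,n\}$ for an arbitrary given $n$ you should choose $q$ so that $(2q)^d \le n$ rather than exactly $n$, which only changes constants. Neither issue affects correctness. Given that the paper is content to cite \cite{Beh46}, invoking the reference would also be acceptable, but your argument is a faithful reproduction of Behrend's proof.
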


\begin{proof}[Proof of Lemma \ref{lem:random-construct}]
    Assume that $n$ is sufficiently large with respect to $k$, $c$, and $a$ (all our asymptotic notation treats $k$, $c$, and $a$ as fixed, and the implied constants will depend on them). First, by Theorem \ref{thm:behrend}, there exists a set $S \subseteq \{1, \ldots, \ceil{an^c}\}$ of size $s = n^{c - o(1)}$ with no $3$-term arithmetic progression. We will ultimately take $A$ to be an $n$-element subset of $S$, which will automatically imply that $\abs{A - A} \leq an^c$ (as we have $A - A \subseteq S - S \subseteq \{1, \ldots, \ceil{an^c} - 1\}$) and that every $k$-configuration that occurs in $A$ is AP-free. So in order to prove Lemma \ref{lem:random-construct}, it will suffice to ensure that every $k$-configuration that occurs in $A$ is $c$-light. (We automatically have that every $k$-configuration that occurs in $A$ is valid, as this is true for \emph{any} set.)

    Let $p = \frac{3n}{s}$. Note that $p = n^{1 - c + o(1)}$, so $p \in (0, 1)$ for all sufficiently large $n$. Let $T$ be a randomly chosen subset of $S$ where each element is included independently with probability $p$. Then $\EE[\abs{T}] = 3n$, and since $\abs{T}$ is a sum of $s$ independent Bernoulli random variables, by the Chernoff bounds we have $\abs{T} \geq 2n$ with probability $1 - o(1)$. 

    We will show that with probability $1 - o(1)$, it is possible to destroy all occurrences of $c$-heavy $k$-configurations in $T$ by deleting at most $n$ of its elements (which we will then do to obtain our set $A$). The following claim allows us to do so. 

    \begin{claim}\label{claim:heavy-occs}
        Let $\cC$ be a $c$-heavy $k$-configuration, and let $I$ be a $c$-heavy part of $\cC$. Then the expected number of $I$-occurrences of $\cC$ in $T$ is $o(n)$. 
    \end{claim}

    \begin{proof}
        Let $d = \dim_I(\cC)$. First, we claim that there are at most $s^d$ $I$-occurrences of $\cC$ in $S$. To see this, without loss of generality assume $I = \{1, \ldots, m\}$ for some $m$. By solving $\cC$ using reduced row echelon form, we can find a list of indices $i_1 < \cdots < i_{d'}$, where $d' = \dim(\cC)$, such that every choice of values for $x_{i_1}$, \ldots, $x_{i_{d'}}$ in $\RR$ corresponds to a unique solution $(x_1, \ldots, x_k)$ to $\cC$, and for each $i \in \{1, \ldots, k\}$, the value of $x_i$ in this solution depends on only the chosen values of $x_{i_j}$ for the indices $i_j < i$. Then each choice of values for $x_{i_j}$ over all indices $i_j \leq m$ corresponds to a unique $I$-solution to $\cC$, so there must be exactly $\dim_I(\cC) = d$ such indices. Finally, each choice of values for these $d$ variables $x_{i_j}$ in $S$ provides at most one $I$-solution to $\cC$ whose elements are in $S$, and therefore at most one $I$-occurrence of $\cC$ in $S$. There are $s^d$ such choices, so there are at most $s^d$ $I$-occurrences of $\cC$ in $S$. 

        Now for each $I$-occurrence of $\cC$ in $S$, the probability that all its elements are included in $T$ is $p^m$ (as it has $m$ distinct elements, each of which is included in $T$ independently with probability $p$). So by the linearity of expectation, the expected number of $I$-occurrences of $\cC$ in $T$ is at most $s^d \cdot p^m$. Since $s = n^{c - o(1)}$ and $p = n^{1 - c + o(1)}$, we have \[s^d \cdot p^m = n^{cd + m(1 - c) + o(1)}.\] Since $cd + m(1 - c) < 1$ by the definition of a $c$-heavy part (and this quantity is a constant not depending on $n$), we have $s^d \cdot p^m = o(n)$, as desired. 
    \end{proof}

    Now for each $c$-heavy $k$-configuration $\cC$, fix some $c$-heavy part of $\cC$, which we denote by $h(\cC)$. Then Claim \ref{claim:heavy-occs} implies that for each \emph{fixed} $c$-heavy $\cC$, the number of $h(\cC)$-occurrences of $\cC$ in $T$ has expected value $o(n)$. But the total number of $k$-configurations is a constant (depending on $k$ but not $n$), since for any given $k$, there are only finitely many difference equalities. So the \emph{total} number of $h(\cC)$-occurrences of $\cC$ over all $c$-heavy $\cC$ has expected value $o(n)$ as well. Then by Markov's inequality, this number is at most $n$ with probability $1 - o(1)$. 

    So with probability $1 - o(1)$, we have $\abs{T} \geq 2n$ and there are at most $n$ total $h(\cC)$-occurrences of $\cC$ in $T$ over all $c$-heavy $k$-configurations $\cC$; this means that for sufficiently large $n$, there must exist a set $T \subseteq S$ with both properties. Fix such a set $T$, and let $A$ be an $n$-element subset of $T$ obtained by first deleting some element from each $h(\cC)$-occurrences of each $c$-heavy $\cC$ (which requires us to delete at most $n$ elements), and then deleting additional elements arbitrarily until we are left with exactly $n$ elements. 

    We have already shown that $\abs{A - A} \leq an^c$ and that no invalid or AP-containing $k$-configuration can occur in $A$; this construction guarantees that no $c$-heavy $k$-configuration can occur in $A$ (as any occurrence of a $c$-heavy $k$-configuration $\cC$ would contain a $h(\cC)$-occurrence of $\cC$). So $A$ has all the claimed properties. 
\end{proof}

\subsection{Proof of Lemma \ref{lem:bounding-diffs}} \label{subsec:bounding-diffs}

In this subsection, we prove Lemma \ref{lem:bounding-diffs} (our lower bound on the number of distinct differences in a generic solution to a valid $k$-configuration, in terms of the dimension of the $k$-configuration). We first establish a few definitions that will be useful in this proof. 

Note that equations of the form $x_{i_1} - x_{i_2} = 0$, $2x_{i_1} - 2x_{i_2} = 0$, and $x_{i_1} - 2x_{i_2} + x_{i_3} = 0$ are all considered to be difference equalities, since we do not require the indices in \eqref{eqn:generic-diffeq} to be distinct. We say a difference equality is \emph{degenerate} if it is of the form $x_{i_1} - x_{i_2} = 0$ for distinct indices $i_1$ and $i_2$ (equivalently, in \eqref{eqn:generic-diffeq} the sets $\{i_1, i_4\}$ and $\{i_2, i_3\}$ are different but not disjoint), and \emph{nondegenerate} otherwise. (Since we assume the $k$-configuration $\cC$ we work with is valid, all difference equalities it implies will be nondegenerate.)

Given a nondegenerate difference equality $(*)$, we say that two variables $x_i$ and $x_j$ appear in $(*)$ \emph{with opposite sign} if $(*)$ can be written as $x_i - x_j = x_{i'} - x_{j'}$ for some $i', j' \in \{1, \ldots, k\}$, and \emph{with the same sign} if $(*)$ can be written as $x_i + x_j = x_{i'} + x_{j'}$ for some $i', j' \in \{1, \ldots, k\}$. This definition matches the intuitive meaning of these terms; in the difference equality \eqref{eqn:generic-diffeq}, $x_{i_1}$ appears with opposite sign as both $x_{i_2}$ and $x_{i_3}$, and with the same sign as $x_{i_4}$. 

Given distinct indices $i, j \in \{1, \ldots, k\}$, we say that a collection of linear equations \emph{$i$-certifies $j$} if it implies some nondegenerate difference equality in which $x_i$ and $x_j$ appear with opposite sign. (In particular, we say a single difference equality $i$-certifies $j$ if $x_i$ and $x_j$ appear in it with opposite sign.) 

Our proof of Lemma \ref{lem:bounding-diffs} will require the following lemma. The proof of this lemma is somewhat technical, so we defer it to Subsection \ref{subsec:2s-certify} (and first prove Lemma \ref{lem:bounding-diffs} assuming this lemma). 

\begin{lemma}\label{lem:2s-certify}
    Fix $i \in \{1, \ldots, k\}$, and let $\cS$ be a valid collection of linearly independent difference equalities that all involve $x_i$. Then $\cS$ $i$-certifies at most $2\abs{\cS}$ indices. 
\end{lemma}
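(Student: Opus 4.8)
\emph{Proposal.} The plan is to prove the bound by induction on $s := \abs{\cS}$, after recording two elementary observations. Throughout, let $V \subseteq \RR^k$ denote the span of the contents of the equations of $\cS$, so that $\dim V = s$ (the equations being linearly independent) and $\cS$ $i$-certifies an index $j$ precisely when $V$ contains the content of some difference equality in which $x_i$ and $x_j$ appear with opposite sign. Since $\cS$ is valid, $V$ contains no degenerate difference equality, so every difference equality with content in $V$ is nondegenerate. The first observation is then that a nondegenerate difference equality involving $x_i$ has one of the three shapes $x_i + x_c = x_a + x_b$ (with $x_i, x_a, x_b, x_c$ distinct), $x_a + x_b = 2x_i$, or $x_i + x_b = 2x_a$, and that in each of these, $x_i$ appears with opposite sign to at most two of the remaining variables (to $x_a, x_b$ in the first two shapes, and only to $x_a$ in the third) --- a direct check against the definitions. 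The second observation is that every index $j$ that $\cS$ $i$-certifies occurs as a variable in some equation of $\cS$, since the content witnessing the certification lies in $V$ and hence is supported within the set of variables appearing in $\cS$. Write $J$ for the set of indices that $\cS$ $i$-certifies.

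For $s \le 1$ the first observation gives $\abs{J} \le 2s$ immediately, so let $s \ge 2$. If every variable other than $x_i$ that appears in $\cS$ appears in at least two of the $s$ equations, then since each equation is a difference equality involving $x_i$, its support minus $\{x_i\}$ has size at most $3$, so the number of $(\text{variable}\neq x_i,\ \text{equation})$ incidences is at most $3s$; hence at most $\tfrac{3s}{2}$ variables other than $x_i$ appear in $\cS$, and the second observation gives $\abs{J} \le \tfrac{3s}{2} \le 2s$. Otherwise, fix a variable $x_{j_0}$ appearing in exactly one equation $(*_0) \in \cS$, and set $\cS' = \cS \setminus \{(*_0)\}$. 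Then $\cS'$ is a valid collection of $s-1$ linearly independent difference equalities all involving $x_i$, so by induction it $i$-certifies a set $J'$ with $\abs{J'} \le 2(s-1)$, and $J' \subseteq J$. Any $j \in J \setminus J'$ is certified by a difference equality whose content lies in $V$ but not in $\mathrm{span}(\cS')$, hence is of the form $v' + \lambda c_0$ with $v' \in \mathrm{span}(\cS')$, $c_0$ the content of $(*_0)$, and $\lambda \ne 0$; since $x_{j_0}$ has coefficient zero throughout $\mathrm{span}(\cS')$ and nonzero coefficient in $c_0$, this difference equality involves $x_{j_0}$. Thus each $j \in J \setminus J'$ is certified by a difference equality on at most four variables that includes $x_i$, $x_{j_0}$, and $x_j$, with $x_i$ and $x_j$ opposite. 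The crux is the claim that $\abs{J \setminus J'} \le 2$; granting it, $\abs{J} \le \abs{J'} + 2 \le 2s$, which closes the induction.

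The main obstacle is exactly this last claim. I would prove it by supposing instead that $J \setminus J'$ contains three indices and deriving a contradiction: for each, the certifying difference equality involves $x_{j_0}$ with a coefficient of absolute value $1$ or $2$, and normalizing these three equations and forming the combinations that eliminate $x_{j_0}$ yields several difference equalities lying in $\mathrm{span}(\cS')$. One then runs a case analysis over the finitely many shapes the equations involved can take; because $\mathrm{span}(\cS')$ contains no degenerate difference equality and is spanned by equations involving $x_i$, the analysis can be pushed to a contradiction, namely that one of the combinations is a degenerate difference equality. Carrying out this finite but delicate case analysis is the technical heart of the lemma, and is the step I expect to demand the most care.
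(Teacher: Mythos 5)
Your inductive framework is a genuinely different route from the paper's, which partitions $\cS$ into ``blobs'' (connected components of the graph whose edges join equations lying in a common minimally-implying subset) and bounds each blob's contribution using the variable-count Lemma~\ref{lem:gen-min-impl}. Your base case, your counting argument when every variable other than $x_i$ appears in at least two equations, and your reduction to a single claim about $J\setminus J'$ are all correct. But the claim $\abs{J\setminus J'}\le 2$ is exactly where the proof lives, and you have not proved it --- you say so yourself.

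There are two concrete problems with the sketched argument for that claim. First, you write that ``forming the combinations that eliminate $x_{j_0}$ yields several difference equalities lying in $\mathrm{span}(\cS')$,'' but these combinations need not be difference equalities at all. If $\alpha_1 = x_i + x_{j_0} - x_{j_1} - x_{b_1}$ and $\alpha_2 = x_i - x_{j_0} - x_{j_2} + x_{b_2}$ (so $x_{j_0}$ appears with opposite signs), then eliminating $x_{j_0}$ gives $\alpha_1 + \alpha_2 = 2x_i - x_{j_1} - x_{b_1} - x_{j_2} + x_{b_2}$, which has five terms and a coefficient of $2$ on $x_i$: not a difference equality, and hence no ``degenerate difference equality'' contradiction is available. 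Even when signs agree, $\alpha_1 - \alpha_2 = -x_{j_1} - x_{b_1} + x_{j_2} + x_{b_2}$ is a bona fide nondegenerate difference equality in $\mathrm{span}(\cS')$ whenever the index sets are disjoint, and nothing forces them to overlap. Second, the structural fact that $\mathrm{span}(\cS')$ is spanned by difference equalities \emph{involving $x_i$} --- which is the constraint that seems to defeat naive counterexamples --- plays no role in your sketch; without invoking it in some form the case analysis is not going to close, yet the sketch says nothing about how it would be used. Finally, be careful that $\abs{J\setminus J'}\le 2$ is \emph{stronger} than the lemma itself implies: when $\abs{J'} < 2(s-1)$ the induction would tolerate $\abs{J\setminus J'} \ge 3$, so this claim must be proved outright and cannot be read off from the statement being proved. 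Either prove it directly (which I believe would end up reconstructing much of the paper's Lemma~\ref{lem:gen-min-impl}, since the real content is that a $t$-element minimally-implying subset involves at most $2t+2$ variables) or strengthen the inductive hypothesis so that less is asked of the final step.
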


\begin{proof}[Proof of Lemma \ref{lem:bounding-diffs}]
    We first make a convenient assumption on the ordering of the indices of $x_1$, \ldots, $x_k$: by using reduced row echelon form to solve $\cC$ (as a system of equations), we can find $d$ distinct indices $i_1$, \ldots, $i_d$ such that every choice of values for $x_{i_1}$, \ldots, $x_{i_d}$ gives a unique solution $(x_1, \ldots, x_k)$ to $\cC$. We assume without loss of generality that these indices are $1$, \ldots, $d$. 

    We can calculate $\abs{A' - A'}$ in the following way: for each pair of indices $(i, j)$ with $1 \leq j < i \leq k$, we say $(i, j)$ corresponds to a \emph{repeated difference} if $\abs{a_i - a_j} = \abs{a_{i'} - a_{j'}}$ for some $(i', j')$ with $1 \leq j' < i' \leq k$ that is lexicographically smaller than $(i, j)$ (i.e., either $i' < i$, or $i' = i$ and $j' < j$), and a \emph{new difference} otherwise. Note that if a pair $(i, j)$ corresponds to a repeated difference, then $(a_1, \ldots, a_k)$ satisfies a nondegenerate difference equality of the form 
    \begin{align}
        x_i - x_j - x_{i'} + x_{j'} = 0\label{eqn:rep-diff}
    \end{align} 
    for some $i', j' \leq i$ (here we do not necessarily have $j' < i'$) --- i.e., a difference equality on $\{1, \ldots, i\}$ in which $x_i$ and $x_j$ occur with opposite sign. Since $(a_1, \ldots, a_k)$ is a generic solution to $\cC$, this difference equality \eqref{eqn:rep-diff} must be implied by $\cC$.
    
    Then $\abs{A' - A'}$ is precisely the number of pairs $(i, j)$ corresponding to new differences. For each index $i$, we will upper-bound the number of repeated differences contributed by $i$ (i.e., the number of indices $j < i$ for which $(i, j)$ corresponds to a repeated difference) by bounding the number of indices $j < i$ for which $\cC$ can imply a difference equality of the form \eqref{eqn:rep-diff}. Combining these bounds over all $i$ will then give the desired lower bound on $\abs{A' - A'}$. 
    
    We first consider the indices $i \leq d$. Our assumption that every choice of $x_1$, \ldots, $x_d$ corresponds to a solution to $\cC$ means that $\cC$ cannot imply any equation on $\{1, \ldots, d\}$, and therefore cannot imply a difference equality of the form \eqref{eqn:rep-diff} for any $j < i \leq d$. So each index $i \leq d$ contributes no repeated differences.

    We now consider the indices $i > d$. Fix some index $i > d$, let $\ol{\cS_i}$ be the set of all difference equalities on $\{1, \ldots, i\}$ involving $x_i$ that are implied by $\cC$, and let $\cS_i$ be a maximal linearly independent subset of $\ol{\cS_i}$. Then $\cS_i$ implies every equation in $\ol{\cS_i}$, so if a pair $(i, j)$ corresponds to a repeated difference, then $\cS_i$ must $i$-certify $j$ (as the corresponding difference equality \eqref{eqn:rep-diff} must be in $\ol{\cS_i}$, and is therefore implied by $\cS_i$). 

    But we have $\dim_{\{1, \ldots, i\}}(\cC) = d$, since each choice of $x_1$, \ldots, $x_d$ corresponds to a unique solution to $\cC$. This means we must have $\abs{\cS_i} \leq i - d$ (using the fact from linear algebra that a system of linear equations on $i$ variables consisting of $s$ linearly independent equations has a solution space of dimension $i - s$). Then by Lemma \ref{lem:2s-certify}, $\cS_i$ $i$-certifies at most $2(i - d)$ indices, and therefore $i$ contributes at most $2(i - d)$ repeated differences. 

    We now use these statements to lower-bound $\abs{A' - A'}$. Each index $i \leq d$ contributes no repeated differences, and therefore $i - 1$ new differences; meanwhile, each index $i > d$ contributes at most $2(i - d)$ repeated differences, and therefore at least $i - 1 - 2(i - d) = 2d - i - 1$ new differences. So $\abs{A' - A'}$ --- i.e., the total number of pairs corresponding to new differences --- is at least \[\sum_{i = 1}^d (i - 1) + \sum_{i = d + 1}^k \max(2d - i - 1, 0) = \begin{cases} (d - 1)^2 & \text{if $d \leq \frac{k}{2} + 1$} \\ \binom{k}{2} - (k - d)(k - d + 1) & \text{if $d \geq \frac{k}{2} + 1$}.\end{cases}\] (For $d \geq \frac{k}{2} + 1$, it is easier to calculate this bound by noting that there are at most $2(1 + 2 + \cdots + (k - d)) = (k - d)(k - d + 1)$ pairs corresponding to repeated differences, and therefore at least $\binom{k}{2} - (k - d)(k - d + 1)$ pairs corresponding to new differences.) 
\end{proof}

\begin{remark}\label{rmk:bounding-diffs-equality}
    Lemma \ref{lem:bounding-diffs} is tight for $d \geq \frac{k}{2} + 1$, as equality is achieved by the $k$-configuration \[\cC = \{x_1 + x_d = x_2 + x_{d + 1} = \cdots = x_{k - d + 1} + x_k\}.\] (The fact that this $k$-configuration is $2$-good can be seen by the same argument as in Example \ref{ex:2-good-illustrate}\ref{item:ex-2-good}.) Each index $1 \leq i \leq d$ contributes no repeated differences, and each index $i > d$ contributes exactly $2(i - d)$ repeated differences, corresponding to the pairs $(i, j)$ and $(i, j - d + 1)$ for all $d \leq j < i$ (as $\cC$ implies $x_i + x_{i - d + 1} - x_j - x_{j - d + 1} = 0$ for all such $j$, so in any generic solution we have $a_i - a_j = a_{j - d + 1} - a_{i - d + 1}$ and $a_i - a_{j - d + 1} = a_j - a_{i - d + 1}$). So there are exactly $(k - d)(k - d + 1)$ pairs corresponding to repeated differences. 
\end{remark}

\subsection{Proof of Lemma \ref{lem:2s-certify}} \label{subsec:2s-certify}

In this subsection, we finally prove Lemma \ref{lem:2s-certify} (thus completing the proof of Theorem \ref{thm:upper-bound}). Note that if the collection $\cS$ in Lemma \ref{lem:2s-certify} does not imply any difference equalities involving $x_i$ other than the ones already in $\cS$, then the conclusion is obvious, as each difference equality in $\cS$ individually $i$-certifies at most $2$ indices. The difficulty comes from the fact that $\cS$ may imply more difference equalities involving $x_i$, which may $i$-certify additional indices; the key observation that allows us to deal with this is the following lemma. 

\begin{lemma}\label{lem:gen-min-impl}
    Let $\cT = \{(*_1), \ldots, (*_t)\}$ be a valid collection of $t$ linearly independent difference equalities, and suppose that $\cT$ minimally implies a difference equality $(*_\cT)$. Then the following statements hold: 
    \begin{enumerate}[(i)]
        \item \label{item:gmi-few-total} At most $2t + 2$ variables appear in $\cT$. Furthermore, if equality holds, then every variable appearing in $\cT$ appears in exactly two of $(*_1)$, \ldots, $(*_t)$, $(*_\cT)$. 
        \item \label{item:gmi-few-new} For every proper subset $\cT' \subseteq \cT$ of size $t'$, at most $2t'$ variables appear in $\cT'$ but not $\cT \setminus \cT'$. 
    \end{enumerate}
\end{lemma}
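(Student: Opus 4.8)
The plan is to view each difference equality as a (multi-)edge on the vertex set $\{1, \ldots, k\}$: the equation $x_{i_1} - x_{i_2} - x_{i_3} + x_{i_4} = 0$, being a rearrangement of $x_{i_1} + x_{i_4} = x_{i_2} + x_{i_3}$, naturally corresponds to a pair of edges $\{i_1, i_4\}$ and $\{i_2, i_3\}$ on a graph where we also track the sign pattern — or, more cleanly, think of it as specifying that the ``$+$ side'' is $\{i_1, i_4\}$ and the ``$-$ side'' is $\{i_2, i_3\}$. The content of $(*_\cT)$ is $\eps_1 *_1 + \cdots + \eps_t *_t$ with all $\eps_j \neq 0$ (by minimality), and I would set $(*_0) = (*_\cT)$ (with content scaled so the linear combination becomes $*_0 = \eps_1 *_1 + \cdots + \eps_t *_t$ up to sign, i.e.\ $\sum_{j=0}^t \delta_j *_j = 0$ for suitable nonzero $\delta_j$). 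The key structural fact to extract is that each variable $x_i$ appearing anywhere must have its coefficients across the $(*_j)$ cancel in this linear dependence; since each $*_j$ has $x_i$ with coefficient in $\{0, \pm 1, \pm 2\}$ and the generic/valid hypotheses restrict the degenerate cases, a counting/parity argument on which $(*_j)$ contain $x_i$ will be the engine of both parts.

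For part \ref{item:gmi-few-total}, I would argue as follows. Form the $(t+1)$-element collection $\cT^+ = \cT \cup \{(*_\cT)\}$ and consider the bipartite-incidence-style count $N = \sum_{i} (\text{number of } (*_j) \in \cT^+ \text{ involving } x_i)$. On one hand, each of the $t+1$ difference equalities involves at most $4$ variables (counted with the convention that $x_{i_1} - 2x_{i_2} + x_{i_3} = 0$ involves $3$), so a naive bound gives $N \leq 4(t+1)$; but I want the sharper statement, so instead I count from the variable side. Because $\sum_{j=0}^t \delta_j *_j = 0$ with all $\delta_j \neq 0$, every variable $x_i$ that appears at all must appear in \emph{at least two} of the $(*_j) \in \cT^+$ (a variable appearing in exactly one equation of a linear dependence with nonzero coefficients cannot cancel). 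Hence $N \geq 2 \cdot \#\{\text{variables appearing in } \cT\}$. Meanwhile each $(*_j)$ contributes at most $4$ to $N$ \emph{unless} it is of degenerate-AP type, but here I must be careful: a difference equality $x_{i_1} - 2x_{i_2} + x_{i_3} = 0$ contributes only $3$. So $N \leq 4(t+1)$ always, giving $\#\{\text{variables}\} \leq 2(t+1) = 2t+2$. For the equality clause: if $\#\{\text{variables}\} = 2t+2$ then $N = 4(t+1)$ forces every $(*_j)$ to be a genuine $4$-variable difference equality (no AP-type) \emph{and} every appearing variable to appear in exactly two of the $(*_j) \in \cT^+$ — this is exactly the claimed conclusion, and here I would double-check that the validity hypothesis rules out a variable appearing twice \emph{within the same} equation (which would be the $2x_{i_1}$ case) interfering with the count.

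For part \ref{item:gmi-few-new}, fix a proper subset $\cT' \subsetneq \cT$ with $|\cT'| = t'$ and let $V'$ be the set of variables appearing in $\cT'$ but not in $\cT \setminus \cT'$. Each $x_i \in V'$ appears in at least one equation of $\cT'$ and in none of $\cT \setminus \cT'$; crucially, since $x_i$ does not appear in $(*_\cT) = \eps_1 *_1 + \cdots + \eps_t *_t$ unless it appears in some $*_j$ with $j \in \{1, \ldots, t\}$, and it appears in no $*_j$ with $(*_j) \in \cT \setminus \cT'$, the contribution of $x_i$ to $(*_\cT)$ comes entirely from $\cT'$ — so within the sub-dependence, $x_i$ must appear in at least two of the $(*_j)$ with $(*_j) \in \cT' \cup \{(*_\cT)\}$, hence in at least one equation of $\cT'$ beyond... actually, more simply, $x_i$ appears in $\geq 1$ equation of $\cT'$ and, counting again from the $\cT'$ side, $\sum_{(*_j) \in \cT'} \#\{\text{variables involved}\} \leq 4t'$; but each variable in $V'$ that appears in the total dependence must appear at least twice among $\cT' \cup \{(*_\cT)\}$ while $(*_\cT)$ contributes at most $4$ more slots total shared among \emph{all} variables — I'd instead run the clean version: variables in $V'$ appear only in $\cT'$ (not $\cT\setminus\cT'$), and among the $4t'$ variable-slots of $\cT'$, the variables \emph{not} in $V'$ (those shared with $\cT \setminus \cT'$ or appearing in $(*_\cT)$) occupy enough slots that at most $2t'$ remain; the precise inequality drops out once one notes each $x_i \in V'$ occupies $\geq 2$ of these $4t'$ slots (again by the cancellation principle applied to the full dependence restricted to its $\cT'$ part, since $x_i$'s coefficient in $(*_\cT)$ equals its coefficient-combination from $\cT'$ alone and must be consistent). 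So $2|V'| \leq 4t'$, i.e.\ $|V'| \leq 2t'$.

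I expect the main obstacle to be the bookkeeping around \emph{degenerate and AP-type difference equalities} — equations like $x_{i_1} - 2x_{i_2} + x_{i_3} = 0$ or (forbidden by validity but still worth ruling out explicitly) $x_{i_1} - x_{i_2} = 0$ — which break the clean ``$4$ variable-slots per equation, each appearing variable uses $\geq 2$ slots'' dichotomy and require a small case analysis; the validity hypothesis on $\cT$ handles the worst of these, but I would need to verify that the coefficient-$2$ case is compatible with all the counting inequalities above (it only helps, since it reduces slot count), and that the ``exactly two equations'' conclusion in \ref{item:gmi-few-total} survives it. The linear-algebra core — that a nonzero-coefficient linear dependence forces every appearing variable into $\geq 2$ of the equations — is elementary and is where the real content lives.
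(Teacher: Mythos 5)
Your argument for part \ref{item:gmi-few-total} is correct and is the same as the paper's: the linear dependence $\sum_j \delta_j\,{*_j} = 0$ (with all $\delta_j \neq 0$) forces every appearing variable into at least two of the $t+1$ equations, each equation has at most $4$ variable slots, hence at most $2t+2$ variables, with equality forcing ``exactly two'' for every variable.

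Your argument for part \ref{item:gmi-few-new}, however, has a real gap. The load-bearing claim in your sketch is that each $x_i \in V'$ occupies at least $2$ of the $4t'$ variable-slots of $\cT'$, ``by the cancellation principle applied to the full dependence restricted to its $\cT'$ part.'' This is false: a variable $x_i$ that appears in exactly one equation of $\cT'$ (and in none of $\cT \setminus \cT'$) simply shows up with a nonzero coefficient in $(*_\cT)$ --- it need not cancel at all, since $(*_\cT)$ is not zero. For a concrete counterexample to the claimed lower bound on slot usage, take $*_1 = x_1 - x_2 - x_3 + x_4$, $*_2 = x_1 - x_2 - x_5 + x_6$, $*_\cT = x_3 - x_4 - x_5 + x_6$ (so $*_\cT = -\,*_1 + *_2$), and $\cT' = \{(*_1)\}$. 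Then $V' = \{x_3, x_4\}$, and each of $x_3, x_4$ occupies only one slot of $\cT'$, not two. The conclusion $\abs{V'} \le 2t'$ happens to hold here, but your derivation of it does not.

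The fix is not a small patch of your counting. The paper's argument defines the auxiliary equation $(*_{\cT \setminus \cT'})$ with content $\eps_{t'+1}\,{*_{t'+1}} + \cdots + \eps_t\,{*_t}$ and splits the relevant indices into three classes $I_1, I_2, I_3$ according to whether $x_i$ appears in $(*_{\cT \setminus \cT'})$, in $(*_\cT)$, or in neither; only the $I_3$ indices are forced to use $\ge 2$ slots of $\cT'$, while $I_1 \cup I_2$ indices only use $\ge 1$. This gives $m_1 + m_2 + 2m_3 \le 4t'$ with $\abs{V'} \le m_2 + m_3$, and the final bound $m_2 + m_3 \le 2t'$ then needs two extra ingredients you would have to supply: the inequality $m_1 \ge m_2 - 1$ (which relies on showing $(*_{\cT \setminus \cT'})$ involves at least $3$ variables --- this is precisely where the \emph{validity} hypothesis on $\cT$ is used, to rule out $(*_{\cT \setminus \cT'})$ being of the form $\eps x_i - \eps x_j = 0$) and a parity observation ($2(m_2+m_3)$ and $4t'$ are both even, so $2(m_2+m_3) - 1 \le 4t'$ upgrades to $2(m_2+m_3) \le 4t'$). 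Your sketch does not use validity anywhere in part \ref{item:gmi-few-new}, which is itself a warning sign that something is missing, since the bound $2t'$ (rather than $2t'+2$) genuinely requires it.
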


\begin{proof}
    Let $*_1$, \ldots, $*_t$, $*_\cT$ be the contents of $(*_1)$, \ldots, $(*_t)$, $(*_\cT)$. Since $\cT$ minimally implies $(*_\cT)$, there exist nonzero $\eps_1, \ldots, \eps_t \in \RR$ for which 
    \begin{align}
        {*} = \eps_1{*_1} + \cdots + \eps_t{*_t}.\label{eqn:gen-min-impl}
    \end{align}

    We first prove \ref{item:gmi-few-total}. Note that \eqref{eqn:gen-min-impl} implies that variable that appears in $\cT$ must appear in at least two of $(*_1)$, \ldots, $(*_t)$, $(*_\cT)$. But each of these $t + 1$ equations involves at most $4$ variables, so there are at most $4(t + 1)$ appearances of variables with multiplicity. So the total number of variables appearing in $\cT$ is at most $\frac{4(t + 1)}{2} = 2t + 2$, and if equality holds then each variable must appear exactly twice. 

    We now prove \ref{item:gmi-few-new}. For notational convenience, assume that $\cT' = \{(*_1), \ldots, (*_{t'})\}$, and let 
    \begin{align}
        {*_{\cT \setminus \cT'}} = \eps_{t' + 1}{*_{t' + 1}} + \cdots + \eps_t{*_t}.\label{eqn:gmi-old}
    \end{align} 
    (In words, $*_{\cT \setminus \cT'}$ is the partial sum of \eqref{eqn:gen-min-impl} corresponding to only the equations \emph{not} in $\cT'$.) Then we have
    \begin{align}
        {*_\cT} - {*_{\cT \setminus \cT'}} = \eps_1{*_1} + \cdots + \eps_t{*_t}.\label{eqn:gmi-new-old}
    \end{align}
    Let $(*_{\cT \setminus \cT'})$ be the equation ${*_{\cT \setminus \cT'}} = 0$ (this is implied by $\cT$, but is not necessarily a difference equality).  

    We now define three disjoint sets of indices:
    \begin{itemize}
        \item Let $I_1$ be the set of indices $i$ for which $x_i$ appears in $(*_{\cT \setminus \cT'})$ but not $(*_\cT)$. 
        \item Let $I_2$ be the set of indices for which $x_i$ appears in $(*_\cT)$ but not $(*_{\cT \setminus \cT'})$. 
        \item Let $I_3$ be the set of indices $i$ for which $x_i$ appears in $\cT'$, but not $(*_{\cT \setminus \cT'})$ or $(*_\cT)$. 
    \end{itemize} 
    Let $I_1$, $I_2$, and $I_3$ have sizes $m_1$, $m_2$, and $m_3$, respectively. 

    Then for every variable $x_i$ appearing in $\cT'$ but not $\cT \setminus \cT'$, we must have $i \in I_2$ or $i \in I_3$ --- if $x_i$ does not appear in $\cT \setminus \cT'$, then by \eqref{eqn:gmi-old} it certainly does not apply in $(*_{\cT \setminus \cT'})$, so $i$ is in $I_2$ if $x_i$ appears in $(*_\cT)$ and $I_3$ if it does not. This means the number of variables appearing in $\cT'$ but not $\cT \setminus \cT'$ is at most $m_2 + m_3$, so it suffices to show that $m_2 + m_3 \leq 2t'$. 

    First, we claim that $m_1 +  m_2 + 2m_3 \leq 4t'$. To see this, for each $i \in I_1 \cup I_2$, since $x_i$ appears on the left-hand side of \eqref{eqn:gmi-new-old}, it must also appear on the right-hand side; this means $x_i$ appears in at least one equation in $\cT'$. Meanwhile, for each $i \in I_3$, since $x_i$ does not appear on the left-hand side of \eqref{eqn:gmi-new-old} but does appear in at least one summand on the right-hand side, it must appear in at least two; this means $x_i$ appears in at least \emph{two} equations in $\cT'$. Since each of the $t'$ equations in $\cT'$ involves at most $4$ variables, this means that $m_1 + m_2 + 2m_3 \leq 4t'$, as claimed. 

    On the other hand, we will show that $m_1 \geq m_2 - 1$. First, $(*_\cT)$ involves at most $4$ variables, as it is a difference equality. Meanwhile, we claim $(*_{\cT \setminus \cT'})$ involves at least $3$ variables. To see this, first note that $*_{\cT \setminus \cT'}$ is not simply $0$ (as $\cT \setminus \cT'$ is nonempty and its equations are linearly independent). However, its coefficients must sum to $0$, as this is true of every summand in \eqref{eqn:gen-min-impl}. So $(*_{\cT \setminus \cT'})$ cannot involve only $1$ variable, and if it involved only $2$ variables, then it would be of the form $\eps x_i - \eps x_j = 0$ for some $\eps \neq 0$ and $i \neq j$, contradicting the validity of $\cT$. So $(*_{\cT \setminus \cT'})$ must involve at least $3$ variables. Finally, $m_1 - m_2$ is the difference between the numbers of variables involved in $(*_{\cT \setminus \cT'})$ and $(*_\cT)$, so $m_1 - m_2 \geq 3 - 4 = -1$. 

    Combining these two bounds, we have $2(m_2 + m_3) - 1 \leq m_1 + m_2 + 2m_3 \leq 4t'$. Since $2(m_2 + m_3)$ and $4t'$ are both even, this means $2(m_2 + m_3) \leq 4t'$, so $m_2 + m_3 \leq 2t'$, as desired. 
\end{proof}

We are now ready to prove Lemma \ref{lem:2s-certify}. 

\begin{proof}[Proof of Lemma \ref{lem:2s-certify}]
    Write down all difference equalities in $\cS$. For each subset of $\cS$ that minimally implies some difference equality also involving $x_i$, draw a box around this subset. We say the \emph{size} of a box is the number of equations it contains, and we say a box is \emph{big} if it has size greater than $1$ (each equation in $\cS$ is automatically in a box of size $1$, as it minimally implies itself). An example is given in Figure \ref{fig:boxes-config}. 
    
    \begin{figure}[ht]
        \centering
        \begin{tikzpicture}

            \filldraw [fill = Honeydew3!20, draw = Honeydew3] (3.8, -4.3) rectangle (8.2, -1.3);
            \filldraw [fill = DarkSlateGray3!10, draw = DarkSlateGray3] (-2, -2.5) rectangle (2, 0.5);
            \fill [MediumPurple3!10] (-2.2, -4.5) rectangle (2.2, -1.5);
            \fill [MediumPurple3!50!DarkSlateGray3!20] (-2, -2.5) rectangle (2, -1.5);
            \draw [DarkSlateGray3!91!MediumPurple3] (-2, -1.5) -- (-2, -2.5) -- (2, -2.5) -- (2, -1.5);
            \draw [MediumPurple3] (-2.2, -4.5) rectangle (2.2, -1.5);

            \node [draw] at (0, 0) {$x_1 - x_2 - x_3 + x_4 = 0$};
            \node [draw] at (0, -1) {$x_1 + x_2 - x_5 - x_6 = 0$};
            \node [draw] at (0, -2) {$x_1 - x_3 - x_5 + x_7 = 0$};
            
            \node [DarkSlateGray3, anchor = south] at (0, 0.5) {$(*_1) : x_1 + x_4 - x_6 - x_7 = 0$};
            \node [draw] at (0, -3) {$x_1 + x_3 - x_8 - x_9 = 0$};
            \node [draw] at (0, -4) {$x_1 - x_5 - x_8 + x_{10} = 0$};
            
            \node [MediumPurple3, anchor = north] at (0, -4.5) {$(*_2) : x_1 + x_7 - x_9 - x_{10} = 0$};
            \node [draw] at (6, -0.2) {$x_1 - x_4 - x_{11} + x_{12} = 0$};
            \begin{scope}[yshift = -0.3cm]
                \node [draw] at (6, -1.5) {$x_1 - x_{12} - x_{13} + x_{14} = 0$};
                \node [draw] at (6, -2.5) {$x_1 + x_{13} - x_{15} - x_{16} = 0$};
                \node [draw] at (6, -3.5) {$x_1 + x_{14} - x_{15} - x_{17} = 0$};
                \node [Honeydew3!50!Honeydew4, anchor = north] at (6, -4) {$(*_3) : x_1 - x_{12} - x_{16} + x_{17} = 0$};
            \end{scope}
        \end{tikzpicture}
        \caption{This figure shows an example of a possible $\cS$ and the corresponding boxes. Here $i = 1$, and $\cS$ consists of the equations written in black. Each equation of $\cS$ is in a box of size $1$, and we have three boxes of size $3$, which are drawn in blue, purple, and green and labelled with the difference equalities that they minimally imply (named $(*_1)$, $(*_2)$, and $(*_3)$, respectively). For example, the blue box minimally implies $(*_1)$ because $(x_1 - x_2 - x_3 + x_4) + (x_1 + x_2 - x_5 - x_6) - (x_1 - x_3 - x_5 + x_7) = x_1 + x_4 - x_6 - x_7$.}
        \label{fig:boxes-config}
    \end{figure}
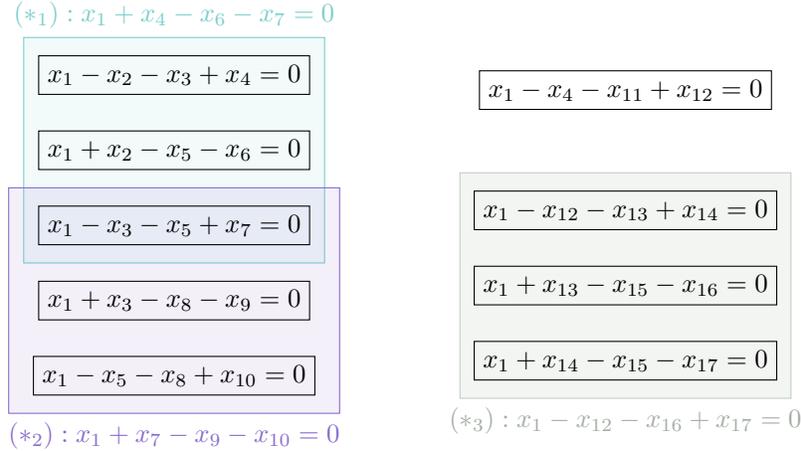

    We will work with the `connected components' of this picture, which we refer to as `blobs' --- to formalize this, consider the graph on $\cS$ in which two equations of $\cS$ are adjacent if and only if there is some box containing both of them. We say a \emph{blob} is a connected component of this graph, and the \emph{size} of a blob is again the number of equations it contains. In the example in Figure \ref{fig:boxes-config}, we have one blob of size $5$ (the union of the blue and purple boxes), one blob of size $3$ (the green box), and one blob of size $1$ (the one equation not in any big box). 

    Then the blobs form a partition of $\cS$ such that if $\cS$ $i$-certifies an index $j$, then in fact some blob must $i$-certify $j$ (every difference equality involving $x_i$ that is implied by $\cS$ is by definition implied by some box, and therefore by the blob containing this box). We will show that for each blob, the number of indices it $i$-certifies is at most twice its size.

    Clearly any blob of size $1$ can $i$-certify at most $2$ indices. Meanwhile, we will show that any blob of size $t > 1$ can only \emph{involve} at most $2t$ variables other than $x_i$, and therefore can $i$-certify at most $2t$ indices. 

    Any blob $\cT$ of size $t > 1$ can be written as a union of big boxes $\cT_0$, \ldots, $\cT_r$ such that for each $1 \leq p \leq r$, the box $\cT_p$ has nonempty intersection with $\cT_0 \cup \cdots \cup \cT_{p - 1}$. Let $t_0 = \abs{\cT_0}$, and for each $1 \leq p \leq r$, let $t_p = \abs{\cT_p \setminus (\cT_0 \cup \cdots \cup \cT_{p - 1})}$; then $t = t_0 + \cdots + t_r$. (Intuitively, we can imagine `building' $\cT$ by starting with some big box $\cT_0$, and then repeatedly adding in big boxes $\cT_1$, \ldots, $\cT_r$ such that at each step, the next big box we add overlaps with some previously added box; then for each $p$, we define $t_p$ as the number of new equations we add to the growing blob when we add in $\cT_p$. In the example in Figure \ref{fig:boxes-config}, for the blob of size $5$, we can take $\cT_0$ to be the blue box and $\cT_1$ to be the purple box; then $t_0 = 3$ and $t_1 = 2$.)

    First, Lemma \ref{lem:gen-min-impl}\ref{item:gmi-few-total} implies that $\cT_0$ contains at most $2t_0 + 2$ variables. Furthermore, equality cannot hold, as $x_i$ appears in all $t_0 + 1 \geq 3$ of the relevant equations, so in fact $\cT_0$ contains at most $2t_0 + 1$ variables. 

    Meanwhile, for each $1 \leq p \leq r$, Lemma \ref{lem:gen-min-impl}\ref{item:gmi-few-new} applied to $\cT_p \setminus (\cT_0 \cup \cdots \cup \cT_{p - 1})$ (which is a proper subset of $\cT_p$) gives that there are at most $2t_p$ variables that appear in $\cT_p$ but not $\cT_0 \cup \cdots \cup \cT_{p - 1}$. (Rephrased in terms of the process of building $\cT$ one box at a time, this states that when we add in the big box $\cT_p$ --- thus adding in $t_p$ new equations to our growing blob --- these $t_p$ new equations give us at most $2t_p$ new variables.) 
    
    So the total number of variables that appear in $\cT$ is at most \[(2t_0 + 1) + 2t_1 + \cdots + 2t_p = 2t + 1.\] Since one of these variables is $x_i$, this means at most $2t$ variables other than $x_i$ appear in $\cT$, as desired. 

    So for every blob, the number of indices it $i$-certifies is at most twice its size. Since the blobs form a partition of $\cS$, this means the \emph{total} number of indices $i$-certified by any blob is at most $2\abs{\cS}$, as desired. 
\end{proof}

Thus we have shown Lemma \ref{lem:2s-certify}, which completes the proof of Theorem \ref{thm:upper-bound} (our family of upper bounds on $g(n, k, \ell)$).  

\section{Proof of Proposition \ref{prop:odd-upper-bound}: A stronger subquadratic bound for odd \texorpdfstring{$k$}{k}} \label{sec:odd-upper-bound}

In this section, we prove Proposition \ref{prop:odd-upper-bound} (our improvement on the value of $\ell$ in Theorem \ref{thm:upper-bound} when $c = 2$ and $k$ is odd). We use the same random construction as in our proof of Theorem \ref{thm:upper-bound} (i.e., we use Lemma \ref{lem:random-construct} as written), but improve our analysis of the number of distinct differences in a generic solution to a $2$-good $k$-configuration when $k$ is odd --- we replace Lemma \ref{lem:bounding-diffs} with the following bound. 

\begin{lemma}\label{lem:odd-bounding-diffs}
    Fix an odd positive integer $k$, and let $\cC$ be a $2$-good $k$-configuration. Let $(a_1, \ldots, a_k)$ be a generic solution to $\cC$, and let $A' = \{a_1, \ldots, a_k\}$. Then \[\abs{A' - A'} \geq \frac{(k + 1)^2}{4} - 4.\] 
\end{lemma}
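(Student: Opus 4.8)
The plan is to mimic the structure of the proof of Lemma \ref{lem:bounding-diffs}, but to extract a better bound in the critical range of dimensions by exploiting the extra hypotheses that $\cC$ is $2$-good (in particular, valid, AP-free, and $2$-light) together with the parity of $k$. Recall that by the $2$-light condition applied to the full index set, we have $\dim(\cC) = d \geq \ceil{(k+1)/2} = (k+1)/2$ (using that $k$ is odd). Lemma \ref{lem:bounding-diffs} already gives $\abs{A'-A'} \geq (d-1)^2 \geq ((k-1)/2)^2 = (k+1)^2/4 - k$ when $d = (k+1)/2$, and gives something at least as large when $d \geq (k+1)/2 + 1$ (indeed for $d \geq (k+1)/2 + 1 \geq k/2 + 1$ the second branch of \eqref{eqn:bounding-diffs-bound} applies and is increasing in $d$, and one checks it exceeds $(k+1)^2/4 - 4$ already for $d = (k+3)/2$). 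So the only case that needs improvement is $d = (k+1)/2$ exactly, where we must gain an additive $k - 4$ over the naive bound of $(d-1)^2$.

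So assume $d = (k+1)/2$, and reuse the setup of the proof of Lemma \ref{lem:bounding-diffs}: reorder so that $x_1, \ldots, x_d$ are free variables, and for each $i > d$ let $\cS_i$ be a maximal linearly independent set of difference equalities on $\{1, \ldots, i\}$ involving $x_i$ and implied by $\cC$, so $\abs{\cS_i} \leq i - d$ and index $i$ contributes at most $2(i-d)$ repeated differences via Lemma \ref{lem:2s-certify}. The naive bound comes from summing $2(i-d)$ over $d < i \leq k$, which (since $k - d = (k-1)/2$) gives at most $(k-d)(k-d+1) = \frac{k-1}{2}\cdot\frac{k+1}{2} = (k^2-1)/4$ repeated differences, hence at least $\binom{k}{2} - (k^2-1)/4 = (k-1)^2/4 = (k+1)^2/4 - k$ new ones. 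To improve this, I would show that the bound $\abs{\cS_i} \leq i - d$ and the bound "$\cS_i$ $i$-certifies at most $2\abs{\cS_i}$ indices" cannot \emph{simultaneously} be tight for too many indices $i$. The key observation is that equality $\abs{\cS_i} = i - d$ for \emph{every} $i > d$ would force, for each $i$, the existence of a new linearly independent difference equality on $\{1,\ldots,i\}$ involving $x_i$; and equality in Lemma \ref{lem:2s-certify} forces strong structural constraints (from Lemma \ref{lem:gen-min-impl}\ref{item:gmi-few-total}, every variable in each relevant blob appears exactly twice among the equations plus the implied equality). I expect that pushing equality in both places for $i = d+1$ and $i = d+2$ already creates either an AP (contradicting AP-freeness) or a degeneracy (contradicting validity), or forces $\dim_I(\cC)$ to be too small on some small set $I$ (contradicting $2$-lightness, via Remark \ref{rmk:c-light-alt}). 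Quantitatively, saving a total of $k - 4$ repeated differences across all $i$ should follow from showing that for the smallest few values of $i$ above $d$ (roughly $i = d+1$ through $i = d + O(1)$, but with a saving that accumulates), the number of repeated differences contributed is strictly less than $2(i-d)$ by an amount that sums to at least $k - 4$; alternatively, and probably more cleanly, one shows the saving is linear in $k$ by a global counting argument — e.g., that the multiset of all difference equalities of the form \eqref{eqn:rep-diff} implied by $\cC$, being implied by a $2$-light AP-free valid configuration, cannot be as "dense" as the extremal configuration of Remark \ref{rmk:bounding-diffs-equality}, which is AP-free but whose index set of size $k = 2d - 1$ would need $\dim_{\{1,\ldots,k\}} = d$, i.e. it is exactly at the $2$-lightness boundary, and a single odd-parity slack in that boundary translates to a linear-in-$k$ slack in the difference count.

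The main obstacle, I expect, is making the last step precise: the extremal configuration $\{x_1 + x_d = x_2 + x_{d+1} = \cdots = x_{k-d+1} + x_k\}$ from Remark \ref{rmk:bounding-diffs-equality} shows that the bound $(d-1)^2$ is tight for \emph{some} valid $2$-good configuration when $k$ is even (there $k = 2(d-1)$, so $d = k/2 + 1$); when $k$ is odd we instead have $d = (k+1)/2$, so $k = 2d - 1$ is one less than $2d$, and the analogous "pairing" construction can only pair up $2d - 2 = k-1$ of the $k$ variables, leaving one variable $x_k$ unpaired. The heart of the argument will be to show that this single unpaired variable forces a loss of $\Omega(k)$ repeated differences compared to the even case — intuitively, $x_k$ cannot participate in the "doubled" difference structure as efficiently — and to rule out any other configuration recouping this loss, using AP-freeness to block the obvious alternative (putting $x_k$ in an arithmetic progression with a paired pair's two elements) and $2$-lightness plus validity to block the rest. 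I would carry this out by a careful case analysis on the structure of $\cS_{d+1}$ and $\cS_{d+2}$ (the first two "non-free" indices), showing each realizable case loses enough, and then propagating; the bookkeeping here is the part most likely to be delicate, and Lemma \ref{lem:gen-min-impl} will be the main tool for controlling how many new variables and certifications each step can produce.
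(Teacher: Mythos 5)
Your reduction to the case $d = \dim(\cC) = (k+1)/2$ and the target of saving $k-4$ repeated differences over the bound of Lemma~\ref{lem:bounding-diffs} are both correct, and your intuition that one must show the bounds $\abs{\cS_i} \leq i - d$ and "$\cS_i$ $i$-certifies at most $2\abs{\cS_i}$ indices" cannot both be tight for every $i$ is exactly the right starting point. But the proposal stops at a strategy rather than a proof, and the two concrete mechanisms you sketch are both misdirected.

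First, you propose to localize the analysis to $\cS_{d+1}$ and $\cS_{d+2}$ (the "first few non-free indices") and to derive a contradiction with validity, AP-freeness, or $2$-lightness from equality holding there. No such contradiction exists: one can have equality at every index $i$ up to some large $i_0$ without violating any of the $2$-good conditions (the extremal configuration $\{x_1 + x_d = \cdots = x_{d-1} + x_{2d-2}\}$ on $k-1$ variables, padded with an isolated $x_k$, is $2$-good and has equality everywhere except at $i = k$). The paper's actual proof instead introduces the notion of an \emph{$i$-saturated} index set, picks a \emph{maximal} saturated set $\{1,\ldots,i_0\}$ (after reordering) where $i_0$ can be anywhere between $d$ and $k$, and then accounts for savings separately for $i > i_0$ (where non-saturation saves $2$ each), for $i = i_0$, and for $i < i_0$ (where the saving is extracted via a global bound on the number of difference equalities $\cC$ can imply on $\{1,\ldots,i_0-1\}$). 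The accounting is an inequality, not a contradiction; you will not be able to rule out near-equality at any individual index.

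Second, and more fundamentally, you propose Lemma~\ref{lem:gen-min-impl} as the main tool, but it is far too weak. The heart of the paper's argument is two structural lemmas that characterize when the $2s$-certify bound is tight or nearly tight (Lemma~\ref{lem:2s-certify-equality}, which shows the collection can be replaced by an equivalent $\cS'$ with at most one difference-aligned pair) and then count difference equalities \emph{not} involving $x_i$ that such an $\cS'$ can imply (Lemma~\ref{lem:impls-not-i}). These in turn require classifying $2$-, $3$-, and $4$-implications and their overlaps (Lemmas~\ref{lem:2-impls} through \ref{lem:many-4-impls}), which rests on the stronger Lemma~\ref{lem:2-good-min-impl} — where the $2$-light hypothesis is used, via Remark~\ref{rmk:c-light-alt}, to sharpen Lemma~\ref{lem:gen-min-impl} to pin down exactly which variables repeat and to force $\pm 1$ coefficients. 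Your "unpaired variable" picture is also off: in the true equality case (Remark~\ref{rmk:odd-bounding-equality}) the odd variable $x_{2d-1}$ \emph{does} participate, via an additional difference equality $x_1 - x_2 - x_3 + x_{2d-1} = 0$, and the resulting configuration achieves $\abs{A'-A'} = (k+1)^2/4 - 4$ exactly; there is no $\Omega(k)$ loss to be extracted from the mere existence of an unpaired index. The actual source of the $+3$ slack in the final bound (as opposed to $0$) is a three-way case split on whether $\cS'$ has zero difference-aligned pairs, one, or is entirely sum-aligned — the last case requires a separate argument showing the index $i_0+1$ exists and contributes at most $2$ repeated differences. None of this appears in the proposal.
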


For comparison, Lemma \ref{lem:bounding-diffs} gives only that $\abs{A' - A'} \geq \frac{(k - 1)^2}{4}$.

Together, Lemmas \ref{lem:random-construct} and \ref{lem:odd-bounding-diffs} immediately imply Proposition \ref{prop:odd-upper-bound} for the same reason that Lemmas \ref{lem:random-construct} and \ref{lem:bounding-diffs} imply Theorem \ref{thm:upper-bound} --- Lemma \ref{lem:odd-bounding-diffs} implies that the sets $A$ given by Lemma \ref{lem:random-construct} (when $c = 2$ and $k$ is odd) satisfy the $(k, \ell)$-local property for the value of $\ell$ in Proposition \ref{prop:odd-upper-bound}.  

Similarly to our proof of Lemma \ref{lem:bounding-diffs}, our proof of Lemma \ref{lem:odd-bounding-diffs} will rely on two lemmas whose proofs are rather technical. In Subsection \ref{subsec:odd-bounding-diffs}, we state these lemmas and prove Lemma \ref{lem:odd-bounding-diffs} assuming them. In Subsection \ref{subsec:observe}, we prove a few observations that will be useful in the proofs of both lemmas. Finally, in Subsection \ref{subsec:2s-certify-equality} we prove the first technical lemma, and in Subsection \ref{subsec:impls-not-i} we prove the second. 

\subsection{Proof of Lemma \ref{lem:odd-bounding-diffs}} \label{subsec:odd-bounding-diffs} 

Our proof of Lemma \ref{lem:odd-bounding-diffs} will make use of the following definition (in addition to the ones given at the beginning of Subsection \ref{subsec:bounding-diffs}): we say that two distinct nondegenerate difference equalities $(*_1)$ and $(*_2)$ are \emph{difference-aligned at $(i, j)$} for distinct indices $i, j \in \{1, \ldots, k\}$ if the variables $x_i$ and $x_j$ appear with opposite sign in both $(*_1)$ and $(*_2)$, and \emph{sum-aligned at $(i, j)$} if $x_i$ and $x_j$ appear with the same sign in both $(*_1)$ and $(*_2)$. For example, $x_1 - x_2 - x_3 + x_4 = 0$ and $x_1 - x_2 - x_5 + x_6 = 0$ are difference-aligned at $(1, 2)$, while $x_1 - x_2 - x_3 + x_4 = 0$ and $x_1 + x_4 - x_5 - x_6 = 0$ are sum-aligned at $(1, 4)$. We say $(*_1)$ and $(*_2)$ are \emph{difference-aligned} or \emph{sum-aligned} if there exist indices $i$ and $j$ for which they are difference-aligned or sum-aligned at $(i, j)$.

Note that if a $k$-configuration $\cC$ is valid and AP-free, then all difference equalities it implies must involve exactly $4$ distinct variables (as a difference equality \emph{not} involving $4$ distinct variables is of one of the forms $x_{i_1} - x_{i_2} = 0$, $2x_{i_1} - 2x_{i_2} = 0$, or $x_{i_1} - 2x_{i_2} + x_{i_3} = 0$; the first two contradict the fact that $\cC$ is valid, and the third contradicts the fact that $\cC$ is AP-free.)

We first make a simple observation that will make these definitions more convenient to work with. (This observation will also be useful in several other ways for the proofs of Lemma \ref{lem:2s-certify-equality} and \ref{lem:impls-not-i}.)

\begin{lemma}\label{lem:2-eqns-5-vars}
    Let $(*_1)$ and $(*_2)$ be distinct difference equalities such that $\{(*_1), (*_2)\}$ is valid and AP-free. Then $(*_1)$ and $(*_2)$ share at most $2$ variables. 
\end{lemma}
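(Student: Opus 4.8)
The plan is to argue by contradiction: assuming $(*_1)$ and $(*_2)$ share at least three variables, I would deduce that $\{(*_1),(*_2)\}$ implies either a degenerate difference equality $x_p - x_q = 0$ (contradicting validity) or an AP equality $x_p - 2x_q + x_r = 0$ (contradicting AP-freeness). The first thing to record is that, by the observation immediately preceding the lemma, each of $(*_1)$ and $(*_2)$ --- being implied by the valid, AP-free collection $\{(*_1),(*_2)\}$ --- involves exactly four distinct variables; so their contents $*_1 = \sum \eps_i x_i$ and $*_2 = \sum \delta_i x_i$ have all coefficients in $\{-1,0,1\}$, exactly four nonzero (two of each sign), and coefficient sum $0$. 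Since each content involves only four variables, ``sharing at least three'' leaves exactly two cases, which I would handle separately.

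In the case where $(*_1)$ and $(*_2)$ use the same four variables $\{a,b,c,d\}$, I would use that there are only three difference equalities on a fixed four-set, so $*_1$ and $*_2$ must be (up to overall sign) two distinct ones among $x_a + x_b - x_c - x_d$, $x_a - x_b + x_c - x_d$, $x_a - x_b - x_c + x_d$; a one-line check shows that some $\pm 1$-combination of any two of these equals $2(x_p - x_q)$ for distinct $p, q$, so $\{(*_1),(*_2)\}$ implies $x_p - x_q = 0$, contradicting validity.

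The substantive case is when they share exactly three variables: say $(*_1)$ involves $\{a,b,c,d\}$ and $(*_2)$ involves $\{a,b,c,e\}$ with $d \ne e$ and $d, e \notin \{a,b,c\}$. For each $v \in \{a,b,c\}$ I would form $\gamma_v := *_1 - (\eps_v\delta_v)\,*_2$, the content of an equation implied by $\{(*_1),(*_2)\}$ in which $x_v$ has been cancelled, which still retains $x_d$ (hence is nonzero) and still has coefficient sum $0$. The crux is to control which of the other two $\{a,b,c\}$-variables also cancel: for $w \ne v$, the variable $x_w$ cancels in $\gamma_v$ exactly when $\eps_v\eps_w = \delta_v\delta_w$ (that is, when $x_v$ and $x_w$ have the same relative sign in $(*_1)$ as in $(*_2)$), and a parity argument --- $(\eps_a\eps_b)(\eps_b\eps_c)(\eps_c\eps_a) = 1$, and likewise for the $\delta$'s --- shows that among the three pairs inside $\{a,b,c\}$, either all three or exactly one has this ``same relative sign'' property. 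If all three do, then for any $v$ both other variables cancel, so $\gamma_v$ reduces to a form in $x_d, x_e$ only, which the coefficient-sum-$0$ condition forces to be $\pm(x_d - x_e)$ --- a degenerate equality. If exactly one pair does, I would take $v$ to be one of its two members; then exactly one other $\{a,b,c\}$-variable cancels, the remaining one $u$ survives with coefficient $\pm 2$, so $\gamma_v = \pm 2x_u \pm x_d \pm x_e$, which the coefficient-sum-$0$ condition forces to be $\pm(x_d + x_e - 2x_u)$ --- an AP equality with distinct indices $d, u, e$. Either way we contradict the hypotheses.

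The one place needing care --- and the only real obstacle --- is the bookkeeping in this last case: eliminating an arbitrary variable of $\{a,b,c\}$ could leave behind an innocuous implied equation such as $2x_b + 2x_c - x_d - x_e = 0$, which violates neither validity nor AP-freeness, so I must ensure I can always steer the elimination toward a degenerate or AP equation. That is exactly what the ``same relative sign'' parity observation buys --- it forces $0$ or $2$ mismatches among the three pairs, so a good choice of $v$ always exists --- and the coefficient-sum-$0$ accounting then rules out the remaining nuisance possibility $x_d + x_e = 0$ by pinning the coefficients of $x_d$ and $x_e$ to be equal. Everything else is routine sign-chasing.
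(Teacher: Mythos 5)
Your proof is correct, and the core mechanism is the same as the paper's: combine $*_1$ with $\pm *_2$ to cancel shared variables, then show the resulting small linear form is forced by its coefficient constraints to be a degenerate or AP equality. However, your execution is noticeably heavier than it needs to be. The paper's proof avoids all of your casework with one observation: among the $\geq 3$ shared variables, at least two must have matching coefficients in $*_1$ and $*_2$ (by pigeonhole, after possibly replacing $*_2$ with $-*_2$). Cancelling just those two, the form ${*} = {*_1} - {*_2}$ involves at most $5 - 2 = 3$ variables, is nonzero (since $(*_1) \neq (*_2)$), has coefficient sum $0$, and all its nonzero coefficients lie in $\{\pm 1, \pm 2\}$; these constraints alone pin it down to one of the forms $x_p - x_q = 0$, $2x_p - 2x_q = 0$, or $x_p - 2x_q + x_r = 0$, giving the contradiction in one stroke. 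Your split into shared-$4$ versus shared-exactly-$3$, your ``relative sign'' parity lemma, and your careful choice of which $v \in \{a,b,c\}$ to eliminate are all correct, and the parity observation is genuinely in the spirit of arguments the paper uses elsewhere (e.g.\ in Lemma~\ref{lem:2-good-min-impl}), but for this particular lemma they are unnecessary: you only need to cancel \emph{two} shared variables, not steer toward cancelling a specific pattern of them. Guaranteeing two cancellations is trivial; the coefficient-sum constraint handles the rest without knowing which, if any, of the remaining shared variables also cancelled.
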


\begin{proof}
    Assume for contradiction that $(*_1)$ and $(*_2)$ share at least $3$ variables, so in total they involve at most $5$ variables. Let $*_1$ and $*_2$ be the contents of $(*_1)$ and $(*_2)$. We can assume without loss of generality that at least $2$ of the shared variables have the same coefficient in both $*_1$ and $*_2$ (either at least $2$ have the same coefficient, or at least $2$ have opposite coefficients, and in the latter case we can replace $*_2$ by its negative to reach the former case). 

    Now let ${*} = {*_1} - {*_2}$, and let $(*)$ be the equation ${*} = 0$ (which is implied by $\{(*_1), (*_2)\}$). The variables that appear with the same coefficient in $*_1$ and $*_2$ must cancel out of $*$, so $(*)$ involves at most $5 - 2 = 3$ variables. Furthermore, $*$ is not identically $0$ (as $(*_1)$ and $(*_2)$ are distinct), but its coefficients sum to $0$ and are all $\pm 1$ or $\pm 2$. So $(*)$ must be of the form $x_{i_1} - x_{i_2} = 0$, $2x_{i_1} - 2x_{i_2} = 0$, or $x_{i_1} - 2x_{i_2} + x_{i_3} = 0$, contradicting the fact that $\{(*_1), (*_2)\}$ is valid and AP-free. 
\end{proof}

In particular, Lemma \ref{lem:2-eqns-5-vars} means that if $(*_1)$ and $(*_2)$ share some variable $x_i$ and are difference-aligned or sum-aligned, then they must be difference-aligned or sum-aligned at $(i, j)$ for some index $j \neq i$. In our argument, we will frequently work with collections of difference equalities that all involve some fixed variable $x_i$; this observation will allow us to simply write that two equations in such a collection are difference-aligned or sum-aligned when we mean that they are difference-aligned or sum-aligned at $(i, j)$ for some $j$. 

We now state the two technical lemmas we need for our proof of Lemma \ref{lem:odd-bounding-diffs}. The first lemma gives a characterization of the $2$-good collections $\cS$ that achieve equality or near-equality in Lemma \ref{lem:2s-certify}. (As stated, the lemma only gives a necessary condition on $\cS$; it is not hard to show that the converse of the lemma is true as well --- i.e., this condition is sufficient --- but we do not need this for our purposes.) 

\begin{lemma}\label{lem:2s-certify-equality}
    Fix $i \in \{1, \ldots, k\}$, and let $\cS$ be a $2$-good collection consisting of $s$ linearly independent difference equalities that all involve $x_i$, such that $\cS$ $i$-certifies at least $2s - 1$ indices. Then we can find some $\cS'$, also consisting of $s$ linearly independent difference equalities that all involve $x_i$, such that $\cS'$ is equivalent to $\cS$ and the following statements hold: 
    \begin{enumerate}[(i)]
        \item \label{item:certify-2s} If $\cS$ $i$-certifies exactly $2s$ indices, then no two equations in $\cS'$ are difference-aligned. 
        \item \label{item:certify-one-less} If $\cS$ $i$-certifies exactly $2s - 1$ indices, then exactly one pair of equations in $\cS'$ is difference-aligned. 
    \end{enumerate}
\end{lemma}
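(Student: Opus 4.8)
The plan is to analyze the structure forced by near-equality in Lemma \ref{lem:2s-certify} by tracking, inside its proof, where the slack can vanish. Recall that in that argument, $\cS$ was partitioned into blobs, and for each blob $\cT$ of size $t > 1$, we wrote $\cT = \cT_0 \cup \cdots \cup \cT_r$ as an overlapping union of big boxes, showed $\cT_0$ involves at most $2t_0 + 1$ variables, each later $\cT_p$ contributes at most $2t_p$ new variables, and hence $\cT$ involves at most $2t + 1$ variables total, so it $i$-certifies at most $2t$ indices; summing over blobs gives the bound $2\abs{\cS}$. So if $\cS$ $i$-certifies at least $2s - 1$ indices, then the total slack lost across all these inequalities is at most $1$. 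First I would enumerate the sources of slack: (a) a blob of size $1$ might $i$-certify fewer than $2$ indices, or its single equation might involve $x_i$ with no second variable it $i$-certifies (impossible for a genuine nondegenerate difference equality involving $x_i$ — it always $i$-certifies exactly the two variables appearing with opposite sign to $x_i$, so size-$1$ blobs are tight unless two such blobs $i$-certify the same index); (b) within a big box $\cT_0$, Lemma \ref{lem:gen-min-impl}\ref{item:gmi-few-total} might not be tight (the minimal implication has $< 2t_0 + 2$ variables even before removing $x_i$); (c) a later box $\cT_p$ might contribute $< 2t_p$ new variables; (d) the $i$-certified indices of distinct blobs might overlap, or a blob might $i$-certify fewer indices than the number of variables $\neq x_i$ it involves (because two of its variables $\neq x_i$ appear with the same sign as each other and... no — $i$-certification of $j$ only requires $x_i, x_j$ opposite sign, so every variable $\neq x_i$ appearing in a blob with a nondegenerate difference equality involving it opposite to $x_i$ — I'd need to check every variable in the blob actually gets $i$-certified, which uses AP-freeness and validity via Lemma \ref{lem:2-eqns-5-vars}).

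The heart of the matter is therefore to show that \emph{near}-tightness forces a very rigid structure, and then read off the "difference-aligned pair" statement. I would argue: if $\cS$ $i$-certifies $2s$ or $2s-1$ indices, then (up to discarding boxes/blobs that don't matter and replacing $\cS$ by an equivalent $\cS'$) essentially all big-box inequalities are tight, so by the equality clause of Lemma \ref{lem:gen-min-impl}\ref{item:gmi-few-total}, in each maximal big box every variable appears in exactly two of the equations-plus-their-minimal-implication. The key translation is: \emph{two equations of $\cS'$ are difference-aligned (at $(i,j)$) iff $x_i$ and $x_j$ both appear in both of them with opposite sign}, i.e. they "share" the $i$-certified index $j$; and a blob that is tight (involves exactly $2t+1 = 2t_0+1 + 2t_1 + \cdots$ variables) $i$-certifies exactly its $2t$ non-$x_i$ variables with no repeats among those variables, meaning no two equations inside it are difference-aligned. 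Hence difference-alignments within a tight blob, or between two blobs $i$-certifying a common index, are exactly the events that cost slack. In the $2s$ case there is no slack, so after choosing $\cS'$ appropriately (a basis of each blob's span realizing the tight structure — one can always row-reduce within a blob's span to make the minimal-implication structure explicit, which is where the "equivalent $\cS'$" freedom is used) no two equations are difference-aligned; in the $2s-1$ case there is exactly one unit of slack, which must be accounted for by exactly one coincidence, yielding exactly one difference-aligned pair in $\cS'$.

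Concretely I would proceed: (1) reduce to the case that $\cS$ is a single blob, or handle blobs one at a time — if two different blobs each of size $\geq 1$ were not individually tight we'd lose $\geq 2$ slack, contradiction in both cases; if exactly one blob is size-$1$-coincidence or one non-tight, track that as the single slack unit. (2) For a tight blob, invoke Lemma \ref{lem:gen-min-impl}\ref{item:gmi-few-total}'s equality clause and Lemma \ref{lem:2-eqns-5-vars} to show each big box is, after a change of basis within its span, a "book" of difference equalities sharing no index-pair beyond $x_i$ — i.e. a collection pairwise \emph{not} difference-aligned; here I would build $\cS'$ box by box. (3) Verify that gluing the boxes along their overlaps preserves "no difference-aligned pair" in the $2s$ case using the tightness of each $2t_p$ step (a new box contributing exactly $2t_p$ new variables, all of them freshly $i$-certified). (4) In the $2s-1$ case, locate the unique slack unit — either one box of some blob has $2t_0$ instead of $2t_0+1$ "extra" variables (Lemma \ref{lem:gen-min-impl} not tight by one), or one step contributes $2t_p - 1$ new variables, or exactly one index is $i$-certified twice — and check that in every case this manifests, after building $\cS'$, as exactly one difference-aligned pair (and conversely that it cannot produce two). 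The main obstacle I expect is step (2)–(3): turning the counting-tightness of Lemma \ref{lem:gen-min-impl} into the clean combinatorial statement "$\cS'$ has the prescribed alignment structure," since I must carefully choose the equivalent system $\cS'$ (the lemma only promises existence) so that the variable-sharing pattern guaranteed by the equality clause translates into pairwise non-alignment, and ensure that the row-reduction used to produce $\cS'$ does not secretly create new difference-alignments or destroy the $2$-good hypothesis — controlling this will likely require the observations deferred to Subsection \ref{subsec:observe}.
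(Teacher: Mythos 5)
Your high-level instinct is right about the translation (a difference-aligned pair of equations in $\cS'$ is exactly a repeated index in the $i$-certification count, so $2s$ vs.\ $2s-1$ certified indices means zero vs.\ one repeat) and you correctly sense that the hard part is choosing $\cS'$ so this accounting is readable directly off the equations. But the slack-tracking plan as described has a genuine gap at its center, and a key intermediate claim is false.

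You write that a tight blob ``$i$-certifies exactly its $2t$ non-$x_i$ variables with no repeats among those variables, meaning no two equations inside it are difference-aligned.'' This is not true for the original collection $\cS$. Take the size-$3$ blob $\{x_1 - x_2 - x_3 + x_4 = 0,\; x_1 + x_2 - x_5 - x_6 = 0,\; x_1 - x_3 - x_5 + x_7 = 0\}$ (the $3$-implication in Figure \ref{fig:3-impl-graph}, which is valid, AP-free, and $2$-light, hence ``tight'' in the sense of involving $2t+1 = 7$ variables). The second and third equations are difference-aligned at $(1,5)$; so are the first and third at $(1,3)$. Tightness of the variable count does not eliminate difference-alignments within $\cS$ itself. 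The whole point of the paper's construction is that one must \emph{replace} some equations of $\cS$ by other elements of the span. Also, your appeal to the equality clause of Lemma \ref{lem:gen-min-impl}\ref{item:gmi-few-total} cannot apply to a big box of size $t_0 \geq 2$: equality there would require every variable to appear exactly twice among the $t_0 + 1$ equations, but $x_i$ appears in all $t_0 + 1 \geq 3$ of them. The relevant rigidity statement is Lemma \ref{lem:2-good-min-impl}\ref{item:2-good-mi-vars} (for $2$-good collections), which forces the box size $t_0$ to lie in $\{1,3\}$.

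The paper's route is cleaner and avoids a global slack account entirely. Using Lemma \ref{lem:2-good-min-impl}, one shows every big box inside a $2$-good $\cS$ is a $3$-implication, that the graph $\cG_3(\cT)$ on a $3$-implication together with its produced equation is a $4$-vertex path whose edges are exactly the difference-alignments (Lemma \ref{lem:3-impls}), and that distinct $3$-implications in $\cS$ are disjoint (Lemma \ref{lem:3-impls-disjoint}). Then $\cS'$ is built explicitly: keep every equation of $\cS$ not in a $3$-implication, and for each $3$-implication $\cT$ discard one \emph{degree-$2$} vertex of the path $\cG_3(\cT)$, keeping the other three equations of $\cT \cup \{(*_\cT)\}$. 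This gives Lemma \ref{lem:sprime-certify}: $\cS$ $i$-certifies $j$ iff $x_j$ appears opposite $x_i$ in some equation of $\cS'$. With that in hand, Lemma \ref{lem:2s-certify-equality} is a pigeonhole count on $2s$ slots — no slack-tracking needed. Your plan, even if it could be pushed through, would have to rederive essentially all of this structure implicitly to control steps (2)–(4), and as stated it asserts a false claim at the pivotal point.
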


The second technical lemma essentially considers a collection of difference equalities of the form produced by Lemma \ref{lem:2s-certify-equality}, and bounds the number of difference equalities \emph{not} involving $x_i$ that such a collection implies. 

\begin{lemma}\label{lem:impls-not-i}
    Fix $i \in \{1, \ldots, k\}$, and suppose that $\cS$ is a $2$-good collection of $s$ linearly independent difference equalities that all involve $x_i$, such that at most one pair of equations in $\cS$ is difference-aligned.
    \begin{enumerate}[(i)]
        \item \label{item:no-diff} If no two equations in $\cS$ are difference-aligned and not all are sum-aligned, then $\cS$ implies at most $\binom{s - 1}{2}$ difference equalities that do not involve $x_i$. 
        \item \label{item:one-diff} If exactly one pair of equations in $\cS$ is difference-aligned, then $\cS$ implies at most $\binom{s - 1}{2} + 1$ difference equalities that do not involve $x_i$. 
        \item \label{item:all-sum} If all equations in $\cS$ are sum-aligned, then $\cS$ implies at most $\binom{s}{2}$ difference equalities that do not involve $x_i$.
    \end{enumerate}
\end{lemma}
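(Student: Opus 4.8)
The plan is to pin down the structure of how $\cS$ can imply a difference equality not involving $x_i$, and then count. I would first normalize, writing each equation of $\cS$ as $x_i + x_{c_j} = x_{a_j} + x_{b_j}$ with $i, a_j, b_j, c_j$ distinct (possible since $\cS$ is valid and AP-free). Two equations are then sum-aligned precisely when they share the value $c_j$, so sum-alignment is an equivalence relation partitioning $\cS$ into ``groups''; a pair is never both sum-aligned and difference-aligned (that would force two variables equal); and by Lemma \ref{lem:2-eqns-5-vars} two equations of $\cS$ share only $x_i$ and at most one further variable. I would then note that the space of linear forms implied by $\cS$ with zero $x_i$-coefficient has dimension $s - 1$ and is spanned by the differences $(*_1) - (*_k)$, so every difference equality $(*)$ not involving $x_i$ implied by $\cS$ can be written as $\sum_j \eps_j (*_j)$ with $\sum_j \eps_j = 0$; letting $\cT = \{(*_j) : \eps_j \neq 0\}$ be the associated minimal implying sub-collection, the key claim is that $\cT$ always has size $2$.

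Proving this key claim is the step I expect to be the main obstacle. Suppose $\cT$ has size $t \geq 3$. Since $\cS$, hence $\cT$, is $2$-light, Remark \ref{rmk:c-light-alt} gives that $\cT$ involves at least $2t+1$ variables; Lemma \ref{lem:gen-min-impl}\ref{item:gmi-few-total} gives at most $2t+2$, and equality there would force $x_i$ — which appears in all $t \geq 3$ equations of $\cT$ — to appear in exactly two of them, a contradiction. So $\cT$ involves exactly $2t+1$ variables, and since $(*)$ is implied by $\cT$ its four variables all lie among them. Counting appearances of the $2t$ non-$x_i$ variables among the $t$ equations of $\cT$ — there are $3t$ appearances, each of the four variables of $(*)$ occurs at least once, and each of the other $2t-4$ variables must occur at least twice (else it would survive in $\sum_j \eps_j(*_j)$, as all $\eps_j \neq 0$) — forces $t \leq 4$ and strongly constrains the occurrence pattern. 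Encoding each variable shared by exactly two equations as an edge of a graph on $\cT$ (simple, by Lemma \ref{lem:2-eqns-5-vars}), a short finite analysis of the cases $t \in \{3,4\}$ — using the hypothesis that at most one pair of equations in $\cS$ is difference-aligned, and that a $2$-good collection admits no cancellation producing a degenerate or AP-type equation — shows this graph must have one of a handful of shapes, and in each the requirement that the shared variables cancel forces the $\eps_j$ to be determined up to sign around every cycle, incompatibly with $\sum_j \eps_j = 0$ and all $\eps_j \neq 0$. This contradiction establishes the key claim.

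It then remains to count. Every difference equality not involving $x_i$ implied by $\cS$ is $\pm((*_j) - (*_{j'}))$ for some pair; computing this difference, $x_i$ always cancels, and — checking the three ways in which $(*_j)$ and $(*_{j'})$ can share their at-most-one further common variable — the result is a genuine difference equality exactly when $\{(*_j), (*_{j'})\}$ is sum-aligned or difference-aligned (in the remaining ``mixed'' case a coefficient-$2$ term appears, impossible since $\cS$ is valid and AP-free). Hence the count is at most the number of sum-aligned pairs plus the number of difference-aligned pairs. The number of sum-aligned pairs is $\sum_G \binom{|G|}{2}$ over the groups $G$; iterating $\binom{a}{2} + \binom{b}{2} \leq \binom{a+b-1}{2}$ (valid for $a,b \geq 1$) gives $\sum_G \binom{|G|}{2} \leq \binom{s - r + 1}{2}$, where $r$ is the number of groups. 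In case \ref{item:all-sum} there is a single group (and then every pair is sum-aligned, so there are no difference-aligned pairs), giving $\binom{s}{2}$; in cases \ref{item:no-diff} and \ref{item:one-diff} there are at least two groups — in \ref{item:no-diff} because not all equations are sum-aligned, in \ref{item:one-diff} because the unique difference-aligned pair cannot be sum-aligned and so lies in two distinct groups — so $\sum_G \binom{|G|}{2} \leq \binom{s-1}{2}$, and adding the $0$ (resp.\ $1$) difference-aligned pairs yields $\binom{s-1}{2}$ (resp.\ $\binom{s-1}{2}+1$), as desired.
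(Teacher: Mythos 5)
Your key claim --- that the minimal implying subcollection $\cT$ always has size $2$ --- is false, and this is where the argument breaks down. The paper devotes Lemmas \ref{lem:4-impl}, \ref{lem:4-impls-overlap}, \ref{lem:4-impls-7}, and \ref{lem:many-4-impls} to analyzing $4$-implications precisely because they genuinely occur: Example \ref{ex:4-impl} exhibits two explicit $4$-element $2$-good collections involving $x_1$, each with exactly one difference-aligned pair, that minimally imply a difference equality not involving $x_1$ (with $*={*_1}+{*_2}-{*_3}-{*_4}$). Your claimed contradiction for $t=4$ does not materialize: the coefficient vector $(\eps_1,\eps_2,\eps_3,\eps_4)=(+1,+1,-1,-1)$ satisfies both $\sum_j\eps_j=0$ and $\eps_j\neq 0$, so nothing is incompatible. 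The flaw in the cycle argument is that a shared variable can appear with the \emph{same} sign in both equations of an edge (as happens for sum-aligned pairs), in which case the cancellation constraint is $\eps_j=-\eps_{j'}$, not $\eps_j=\eps_{j'}$; the constraints therefore do not force alternation around cycles, and in the first example of Example \ref{ex:4-impl} the edge-graph on $\cT$ is a triangle plus a pendant edge with a perfectly consistent assignment of signs.

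Once $4$-implications are admitted, your final count --- sum-aligned pairs plus difference-aligned pairs --- undercounts the implied difference equalities in case \ref{item:one-diff}, since $4$-implications contribute additional ones. (Your bound in cases \ref{item:no-diff} and \ref{item:all-sum} is saved only because Lemma \ref{lem:4-impl} shows a $4$-implication must contain a difference-aligned pair, so none exist there; but your proof does not invoke this.) The paper's remedy is the structural analysis culminating in Lemma \ref{lem:many-4-impls}: all $4$-implications in $\cS$ share a common $3$-equation core, and their fourth equations are mutually sum-aligned, hence lie in a single stack of size $s_1$; this caps the number of $4$-implications at $s_1$, and combined with the convexity bound $\binom{s_1+1}{2}+\sum_{g\geq 2}\binom{s_g}{2}\leq\binom{s-1}{2}$ (using that there are at least three stacks when a $4$-implication exists), still yields $\binom{s-1}{2}+1$. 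Your reduction to a pair count is correct for $2$-implications (and your Lemma \ref{lem:2-eqns-5-vars} argument ruling out the ``mixed'' case is essentially Lemma \ref{lem:2-impls}), but the proof as written is incomplete without handling the $t=4$ case.
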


Note that the cases \ref{item:one-diff} and \ref{item:all-sum} are disjoint (and of course \ref{item:no-diff} is also disjoint from both of them) --- by Lemma \ref{lem:2-eqns-5-vars}, $\cS$ cannot contain two equations that are simultaneously difference-aligned and sum-aligned. In fact, equality always holds in \ref{item:all-sum}, but we do not need this for our proof. 

We now use these lemmas to prove Lemma \ref{lem:odd-bounding-diffs}. 

\begin{proof}[Proof of Lemma \ref{lem:odd-bounding-diffs}]
    Let $\dim(\cC) = d$; since $\cC$ is $2$-good, we must have $d \geq \frac{k + 1}{2}$. First note that if $d \geq \frac{k + 3}{2}$, then we are done by Lemma \ref{lem:bounding-diffs}, as we have \[\abs{A' - A'} \geq \binom{k}{2} - \frac{(k - 3)(k - 1)}{4} = \frac{(k + 1)^2}{4} - 1 > \frac{(k + 1)^2}{4} - 4.\] So we now assume that $d = \frac{k + 1}{2}$.

    In order to lower-bound $\abs{A' - A'}$, we use the same notion of new and repeated differences as in our proof of Lemma \ref{lem:bounding-diffs} --- we will show that the number of pairs corresponding to repeated differences is at most \[(k - d)(k - d - 1) + 3 = \frac{(k - 3)(k - 1)}{4} + 3,\] and therefore the number of pairs corresponding to new differences is at least \[\binom{k}{2} - \frac{(k - 3)(k - 1)}{4} - 3 = \frac{(k + 1)^2}{4} - 4.\] (For comparison, in Lemma \ref{lem:bounding-diffs} we proved the number of pairs corresponding to repeated differences was at most $(k - d)(k - d + 1)$; so we wish to improve this bound by $2(k - d) - 3$.) As before, if a pair $(i, j)$ with $i > j$ corresponds to a repeated difference, then $\cC$ must imply some nondegenerate difference equality 
    \begin{align}
        x_i - x_j - x_{i'} + x_{j'} = 0 \label{eqn:rep-diff-2}
    \end{align}
    with $i', j' < i$, i.e., a nondegenerate difference equality on $\{1, \ldots, i\}$ in which $x_i$ and $x_j$ appear with opposite sign. Similarly to in the proof of Lemma \ref{lem:bounding-diffs}, we will first make a certain assumption (without loss of generality) on the ordering of the indices of $x_1$, \ldots, $x_k$ (although this assumption will be different from the one we used to prove Lemma \ref{lem:bounding-diffs}), and then bound the number of pairs $(i, j)$ for which $\cC$ implies a difference equality of the form \eqref{eqn:rep-diff-2}.

    Our assumption on the ordering of indices will make use of the following definition. Given some set $I \subseteq \{1, \ldots, k\}$ and index $i \in I$, we say that $I$ is \emph{$i$-saturated} if it satisfies both  of the following conditions: 
    \begin{itemize}
        \item We have $\dim_I(\cC) = d$.
        \item There are at least $2(\abs{I} - d) - 1$ indices $j \in I$ for which $\cC$ implies a difference equality on $I$ in which $x_i$ and $x_j$ appear with opposite sign.
    \end{itemize}
    We say that $I$ is \emph{saturated} if it is $i$-saturated for some $i \in I$. 

    Note that there must exist some saturated set of indices --- by again using reduced row echelon form to solve $\cC$, we can find $d$ distinct indices $i_1$, \ldots, $i_d$ such that every choice of $x_{i_1}$, \ldots, $x_{i_d}$ corresponds to a unique solution $(x_1, \ldots, x_k)$ to $\cC$, and then the set $\{i_1, \ldots, i_d\}$ is saturated (as the second condition is trivially satisfied). So we can let $I \subseteq \{1, \ldots, k\}$ be a \emph{maximal} saturated set (meaning that $I$ is saturated, but no proper superset of $I$ is). Without loss of generality, assume that $I = \{1, \ldots, i_0\}$ for some $d \leq i_0 \leq k$, and that $I$ is $i_0$-saturated --- this is the assumption we make on the ordering of the variables. 
    
    Then the structure of our argument is as follows: 
    \begin{enumerate}[(1)]
        \item \label{item:post-sat} We will individually upper-bound the number of repeated differences contributed by each $i > i_0$ using the fact that $\{1, \ldots, i\}$ is not $i$-saturated. (This bound will be $2$ less than the bound we would get using the same argument as in our proof of Lemma \ref{lem:bounding-diffs}.)
        \item \label{item:sat} We will bound the number of repeated differences contributed by $i = i_0$ in the same way as in our proof of Lemma \ref{lem:bounding-diffs}. 
        \item \label{item:pre-sat} Finally, we will bound the \emph{total} number of repeated differences contributed by all $i < i_0$ by combining the fact that $I = \{1, \ldots, i_0\}$ is $i_0$-saturated together with Lemmas \ref{lem:2s-certify-equality} and \ref{lem:impls-not-i}. The bound we get from this will `usually' be significantly less than the bound we would get using a similar argument to the one in our proof of Lemma \ref{lem:bounding-diffs}; then combining it with \ref{item:post-sat} and \ref{item:sat} gives the desired bound. 
        
        \item \label{item:next-index} The one case in which this does \emph{not} happen (i.e., the bound in step \ref{item:pre-sat} is not good enough) is when we are in case \ref{item:all-sum} when we apply Lemma \ref{lem:impls-not-i}. When this happens, roughly speaking, the difference equalities $\cC$ implies on $\{1, \ldots, i_0\}$ have the same structure as in the equality case of Lemma \ref{lem:bounding-diffs} described in Remark \ref{rmk:bounding-diffs-equality}, so it is not possible to get a better bound on the number of repeated differences contributed by $i \leq i_0$. However, using this structure, we will show that the index $i_0 + 1$ exists and contributes much fewer repeated differences than our original bound in \ref{item:post-sat}; this adjustment to \ref{item:post-sat} then gives the desired bound. 
    \end{enumerate}

    We now proceed with the argument. For step \ref{item:post-sat}, for each $i > i_0$, the fact that $\{1, \ldots, i\}$ is not $i$-saturated means that there are strictly fewer than $2(i - d) - 1$ indices for which $\cC$ implies a difference equality of the form \eqref{eqn:rep-diff-2}, and therefore $i$ contributes at most $2(i - d - 1)$ repeated differences. 

    Now, to perform steps \ref{item:sat} and \ref{item:pre-sat}, let $\ol{\cS}$ be the set of difference equalities on $I = \{1, \ldots, i_0\}$ involving $x_{i_0}$ that are implied by $\cC$, and let $\cS$ be a maximal linearly independent subset of $\ol{\cS}$. As in the proof of Lemma \ref{lem:bounding-diffs}, since $\dim_I(\cC) = d$, we must have $\abs{\cS} \leq i_0 - d$. On the other hand, since $I$ is $i_0$-saturated, $\cS$ must $i_0$-certify at least $2(i_0 - d) - 1$ indices, so by Lemma \ref{lem:2s-certify} we must have $\abs{\cS} \geq i_0 - d$. So we must have $\abs{\cS} = i_0 - d$, and $\cS$ must $i_0$-certify at least $2\abs{\cS} - 1$ indices. Then we can apply Lemma \ref{lem:2s-certify-equality} to $\cS$; let $\cS'$ be the collection of difference equalities that it provides, so that at most one pair of equations in $\cS'$ is difference-aligned. Since $\cS$ and $\cS'$ are equivalent (and each is a collection of linearly independent equations), we have $\abs{\cS'} = \abs{\cS} = i_0 - d$.  

    For step \ref{item:sat}, the number of repeated differences contributed by the index $i = i_0$ is at most the number of indices $i_0$-certified by $\cS$, which is either $2(i_0 - d)$ or $2(i_0 - d) - 1$. 
    
    Meanwhile, for step \ref{item:pre-sat}, the total number of pairs $(i, j)$ with $i < i_0$ corresponding to repeated differences is at most twice the number of difference equalities on $\{1, \ldots, i_0 - 1\}$ implied by $\cC$ --- the equation \eqref{eqn:rep-diff-2} corresponding to each such pair is a difference equality on $\{1, \ldots, i_0 - 1\}$ implied by $\cC$, and each such difference equality corresponds to at most $2$ such pairs $(i, j)$ (as $i$ must be the largest index present in the difference equality, and $j$ must be one of the two indices for which $x_i$ and $x_j$ appear with opposite sign). 
    
    But as stated earlier, since $\dim_I(\cC) = d$, any linearly independent collection of equations on $I$ implied by $\cC$ has size at most $i_0 - d$. Since $\cS'$ already has size $i_0 - d$, this means every equation on $I$ implied by $\cC$ is in fact implied by $\cS'$. So the number of difference equalities on $\{1, \ldots, i_0 - 1\}$ implied by $\cC$ is precisely the number of difference equalities implied by $\cS'$ not involving $x_{i_0}$, which we can upper-bound using Lemma \ref{lem:impls-not-i} (since at most one pair of equations in $\cS'$ is difference-aligned).

    We now consider the following three cases (based on which case of Lemma \ref{lem:impls-not-i} applies to $\cS'$).

    \begin{enumerate}[(a)]
        \item First suppose that no equations in $\cS'$ are difference-aligned and not all are sum-aligned. Then $\cS$ must fall under the first case of Lemma \ref{lem:2s-certify-equality} (since $\cS'$ has no difference-aligned pairs if $\cS$ falls under the first case, and one difference-aligned pair if $\cS$ falls under the second), so it $i_0$-certifies exactly $2(i_0 - d)$ indices. Meanwhile, by Lemma \ref{lem:impls-not-i}\ref{item:no-diff}, $\cS'$ implies at most $\binom{i_0 - d - 1}{2}$ difference equalities not involving $x_{i_0}$. 
        
        Now combining our bounds from the steps \ref{item:post-sat}, \ref{item:sat}, and \ref{item:pre-sat}, we have that the total number of pairs $(i, j)$ corresponding to repeated differences (over all indices $1 \leq i \leq k$) is at most \[2\binom{i_0 - d - 1}{2} + 2(i_0 - d) + \sum_{i = i_0 + 1}^k 2(i - d - 1) = (k - d)(k - d - 1) + 2.\] (One intuitive way to see why this calculation works is to compare this to the proof of Lemma \ref{lem:bounding-diffs}. There, we used the bound that each index $i > d$ contributes at most $2(i - d)$ repeated differences, giving a bound of $2(1 + 2 + \cdots + (k - d)) = (k - d)(k - d + 1)$ on the total number of repeated differences. Here we save $2$ for each index $i > i_0$; and for the indices $i < i_0$ we have a \emph{total} contribution of $2\binom{i_0 - d - 1}{2}$ instead of $2(1 + 2 + \cdots + (i_0 - d - 1)) = 2\binom{i_0 - d}{2}$, which means we save $2(i_0 - d - 1)$. So in total, we save $2(k - i_0) + 2(i_0 - d - 1) = 2(k - d) - 2$ compared to the bound in Lemma \ref{lem:bounding-diffs}.)
        
        \item Now suppose that exactly one pair of equations in $\cS'$ is difference-aligned. Then $\cS$ must fall under the second case of Lemma \ref{lem:2s-certify-equality}, so it $i_0$-certifies exactly $2(i_0 - d) - 1$ indices. Meanwhile, by Lemma \ref{lem:impls-not-i}\ref{item:one-diff}, $\cS'$ implies at most $\binom{i_0 - d - 1}{2} + 1$ difference equalities not involving $x_{i_0}$. Again combining our three bounds, the total number of pairs corresponding to repeated differences is at most \[2\binom{i_0 - d - 1}{2} + 2 + (2(i_0 - d) - 1) + \sum_{i = i_0 + 1}^k 2(i - d - 1) = (k - d)(k - d - 1) + 3.\] 
        \item Finally, suppose that all equations in $\cS'$ are sum-aligned. This means none are difference-aligned, so $\cS$ again falls under the first case of Lemma \ref{lem:2s-certify-equality}, and therefore $i_0$-certifies exactly $2(i_0 - d)$ indices. Meanwhile, by Lemma \ref{lem:impls-not-i}\ref{item:all-sum}, $\cS'$ implies at most $\binom{i_0 - d}{2}$ difference equalities not involving $x_{i_0}$.  

        We now perform step \ref{item:next-index}. Since $\cS'$ is valid and AP-free, and all its equations are sum-aligned, there must exist some $j_0 < i_0$ for which all its equations in $\cS'$ are of the form 
        \begin{align}
            x_{i_0} + x_{j_0} = x_i + x_j\label{eqn:sum-aligned}
        \end{align} 
        for distinct $i, j < i_0$, and the $i_0 - d$ pairs $\{i, j\}$ over all these equations must be disjoint from each other and from $\{i_0, j_0\}$. 
        
        In particular, $\cS'$ involves exactly $2(i_0 - d + 1)$ indices, all of which are at most $i_0$. This means we must have $2(i_0 - d + 1) \leq i_0$, so $i_0 \leq 2d - 2 = k - 1$. So $i_0 + 1$ is indeed a valid index.  

        We now show that the index $i = i_0 + 1$ contributes at most $2$ repeated differences. (This is a significant improvement on the bound of $2(i_0 - d)$ from step \ref{item:post-sat} used in the previous two cases, and will make up for the fact that our bound from step \ref{item:pre-sat} is significantly larger in this case.) 
        
        Let $I' = \{1, \ldots, i_0 + 1\}$. If the index $i_0 + 1$ contributes no repeated differences, then there is nothing to prove; now assume that it contributes at least one repeated difference, so there exist distinct $i_1, i_2, i_3, i_4 \leq i_0$ such that $\cC$ implies \begin{align}
            x_{i_0 + 1} - x_{i_1} - x_{i_2} + x_{i_3} = 0.\label{eqn:next-index-repdiff}
        \end{align}
        
        \begin{claim}
            Neither $x_{i_1}$ nor $x_{i_2}$ appears in $\cS'$.
        \end{claim}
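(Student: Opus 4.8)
I would prove the Claim by contradiction, in the following sharpened form: if $x_{i_1}$ (or, symmetrically, $x_{i_2}$) appears in $\cS'$, then the set $I' = \{1,\dots,i_0+1\}$ is $i_1$-saturated, which is impossible since $I = \{1,\dots,i_0\} \subsetneq I'$ and $I$ was chosen to be a \emph{maximal} saturated set. Since $x_{i_1}$ and $x_{i_2}$ play symmetric roles in \eqref{eqn:next-index-repdiff} (each appears with sign opposite to $x_{i_0+1}$), it is enough to run the argument for $x_{i_1}$. The first condition in the definition of $i_1$-saturated, namely $\dim_{I'}(\cC) = d$, is automatic: $\Sol_{I'}(\cC)$ is a projection of the $d$-dimensional space $\Sol(\cC)$ so $\dim_{I'}(\cC) \le d$, while $I \subseteq I'$ gives $\dim_{I'}(\cC) \ge \dim_I(\cC) = d$. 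So everything reduces to the second condition: producing at least $2(\abs{I'} - d) - 1 = 2(i_0 - d) + 1$ indices $j \in I'$ for which $\cC$ implies a difference equality on $I'$ in which $x_{i_1}$ and $x_j$ appear with opposite sign.

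The heart of the matter is to exhibit $2(i_0-d)$ such indices already inside $I$; then \eqref{eqn:next-index-repdiff}, which is a difference equality on $I'$ implied by $\cC$ in which $x_{i_1}$ and $x_{i_0+1}$ appear with opposite sign, supplies the $\bigl(2(i_0-d)+1\bigr)$-th (note $i_0+1 \notin I$). Here I would use the structure of $\cS'$ in the case at hand: $\cS'$ consists of $i_0 - d$ difference equalities of the form \eqref{eqn:sum-aligned}, which I write as $F_p : x_{i_0} + x_{j_0} = x_{a_p} + x_{b_p}$ for $p = 1,\dots,i_0-d$, with the pairs $\{a_p,b_p\}$ pairwise disjoint and disjoint from $\{i_0,j_0\}$, and with every $F_p$ (hence every $F_p - F_q$) implied by $\cC$. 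Since $x_{i_1}$ appears in $\cS'$, the index $i_1$ is $i_0$, or $j_0$, or lies in exactly one pair $\{a_p,b_p\}$ (say $i_1 = a_p$). If $i_1 \in \{i_0, j_0\}$, then for each $q$ the equation $F_q$ has $x_{i_1}$ opposite to both $x_{a_q}$ and $x_{b_q}$, giving $2(i_0-d)$ distinct indices by disjointness of the pairs. If $i_1 = a_p$, then $F_p$ has $x_{i_1}$ opposite to $x_{i_0}$ and $x_{j_0}$, and for each $q \ne p$ the difference $F_p - F_q$ is the nondegenerate difference equality $x_{a_q} + x_{b_q} = x_{a_p} + x_{b_p}$, in which $x_{i_1} = x_{a_p}$ is opposite to $x_{a_q}$ and $x_{b_q}$; these contribute $2 + 2(i_0 - d - 1) = 2(i_0-d)$ distinct indices (again by disjointness, and none of them equals $i_0+1$). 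Either way we reach $2(i_0-d)$ indices on $I$, hence $2(i_0-d)+1$ on $I'$.

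Combining the count with $\dim_{I'}(\cC) = d$ shows $I'$ is $i_1$-saturated, contradicting the maximality of $I$; therefore $x_{i_1}$ does not appear in $\cS'$, and the same argument applied to $i_2$ completes the Claim. I do not expect a genuine obstacle here — the one conceptual point is recognizing that the right thing to violate is the maximality of $I$ via $i_1$-saturation of $I'$, and that every index occurring in $\cS'$ is already ``opposite-linked'' to exactly $2(i_0-d)$ others inside $\cS'$, so that one extra link from \eqref{eqn:next-index-repdiff} tips it over the saturation threshold. The only care needed is routine bookkeeping: checking that each auxiliary equation $F_p - F_q$ is a genuine (nondegenerate, non-AP) difference equality — which is immediate since its content $x_{a_q} + x_{b_q} - x_{a_p} - x_{b_p}$ involves four distinct indices — and that the index sets produced in each of the three cases are honestly disjoint, which again follows from the disjointness of the pairs $\{a_p,b_p\}$ and their disjointness from $\{i_0,j_0\}$.
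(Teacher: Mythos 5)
Your proof is correct and takes essentially the same approach as the paper: assume for contradiction that $x_{i_1}$ appears in $\cS'$, exhibit $2(i_0 - d) + 1$ indices opposite-linked to $x_{i_1}$ (with $x_{i_0+1}$ supplying the last one via \eqref{eqn:next-index-repdiff}), and conclude that $I'$ is $i_1$-saturated, contradicting the maximality of $I$. The only difference is that you explicitly split into the cases $i_1 \in \{i_0, j_0\}$ versus $i_1 \in \{a_p, b_p\}$, whereas the paper's phrasing (``let $x_{i_0} + x_{j_0} = x_{i_1} + x_{j_1}$ be the equation it appears in'') tacitly treats $x_{i_1}$ as lying on the right-hand side of a single equation of $\cS'$; your case split makes that corner case airtight without changing the structure of the argument.
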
 

        \begin{proof}
            We will show that if $x_{i_1}$ appears in $\cS'$, then $I'$ is $i_1$-saturated, contradicting the fact that $I$ is a \emph{maximal} saturated set. (Then by symmetry $x_{i_2}$ cannot appear in $\cS'$ either.)

            Assume for contradiction that $x_{i_1}$ appears in $\cS'$, and let \[x_{i_0} + x_{j_0} = x_{i_1} + x_{j_1}\] be the equation it appears in. Then $\cS'$ also implies the difference equalities \[x_{i_1} + x_{j_1} = x_i + x_j\] for each of the $i_0 - d - 1$ pairs $\{i, j\}$ in \eqref{eqn:sum-aligned} except $\{i_1, j_1\}$. Then since $x_{i_0 + 1}$ appears with opposite sign as $x_{i_1}$ in \eqref{eqn:next-index-repdiff}, we have at least $2(i_0 - d) + 1 = 2(i_0 - d + 1) - 1$ variables that occur with opposite sign as $x_{i_1}$ in some difference equality on $I'$ implied by $\cC$ --- namely, all variables appearing in $\cS'$ other than $x_{i_1}$ itself and $x_{j_1}$, together with $x_{i_0 + 1}$. So $I'$ is $i_1$-saturated, which contradicts the maximality of $I$. 
        \end{proof}
        
        Then we claim that $\cC$ cannot imply any difference equality on $I'$ involving $x_{i_0 + 1}$ other than \eqref{eqn:next-index-repdiff}. To see this, assume that $\cC$ implies such a difference equality $(*)$. First, $(*)$ must be implied by $\cS'$ together with \eqref{eqn:next-index-repdiff}, since $\cS'$ and \eqref{eqn:next-index-repdiff} together form a collection of $i_0 - d + 1$ linearly independent equations on $I'$ (and $\dim_{I'}(\cC) = d$, so \emph{any} collection of linearly independent equations on $I'$ implied by $\cC$ has size at most $i_0 - d + 1$). But the linear combination used to obtain $(*)$ must use \eqref{eqn:next-index-repdiff} in order for the variable $x_{i_0 + 1}$ to appear; then since $x_{i_1}$ and $x_{i_2}$ appear in \eqref{eqn:next-index-repdiff} but not in $\cS'$, they must appear in $(*)$. This means \eqref{eqn:next-index-repdiff} and $(*)$ share the three variables $x_{i_0 + 1}$, $x_{i_1}$, and $x_{i_2}$, contradicting Lemma \ref{lem:2-eqns-5-vars}.

        So the index $i_0 + 1$ only contributes $2$ repeated differences (corresponding to the pairs $(i_0 + 1, i_1)$ and $(i_0 + 1, i_2)$), as desired. 

        Then combining steps \ref{item:post-sat}, \ref{item:sat}, \ref{item:pre-sat}, and \ref{item:next-index}, the total number of pairs corresponding to repeated differences is at most \[2\binom{i_0 - d}{2} + 2(i_0 - d) + 2 + \sum_{i = i_0 + 2}^k 2(i - d - 1) = (k - d)(k - d - 1) + 2.\] (This calculation can again be understood by a comparison to Lemma \ref{lem:bounding-diffs} --- here we have the same bounds for the total contribution of all indices $i \leq i_0$, we save $2(i_0 - d)$ in the bound for the contribution of $i_0 + 1$, and we save $2$ for each index $i > i_0 + 1$, for a total saving of $2(i_0 - d) + 2(k - i_0 - 1) = 2(k - d) - 2$.)
    \end{enumerate}

    So in all cases, at most $(k - d)(k - d - 1) + 3$ pairs correspond to repeated differences, as desired. 
\end{proof}

\begin{remark}\label{rmk:odd-bounding-equality}
    Lemma \ref{lem:odd-bounding-diffs} is tight for $k \geq 7$ --- letting $k = 2d - 1$, the $k$-configuration \[\cC = \{x_1 + x_d = x_2 + x_{d + 1} = \cdots = x_{d - 1} + x_{2d - 2}, \, x_1 - x_2 - x_3 + x_{2d - 1} = 0\}\] achieves equality. The pairs corresponding to repeated differences are precisely $(i, j)$ and $(i, j - d + 1)$ for $d \leq j < i \leq 2d - 2$, as well as $(2d - 1, 2)$, $(2d - 1, 3)$, and $(2d - 1, d)$ --- this last pair corresponds to a repeated difference because the equations $x_1 + x_d - x_2 - x_{d + 1} = 0$ and $x_1 - x_2 - x_3 + x_{2d - 1} = 0$ together imply $x_3 + x_d - x_{d + 1} - x_{2d + 1} = 0$. So there are exactly \[2\binom{d - 1}{2} + 3 = \frac{(k - 1)(k - 3)}{4} + 3\] repeated differences. (Note that this $k$-configuration does not satisfy the same assumption on ordering that we make in the proof of Lemma \ref{lem:odd-bounding-diffs} --- here the maximal saturated set is $\{1, \ldots, 2d - 1\}$, which is $2$-saturated and $3$-saturated (and $d$-saturated) --- so we would have to reorder the indices in order to run that proof.)
\end{remark}

\subsection{Some useful observations} \label{subsec:observe}

The proofs of Lemmas \ref{lem:2s-certify-equality} and \ref{lem:impls-not-i} will both involve analyzing subsets of the $2$-good collection $\cS$ that minimally imply certain other difference equalities; the following two lemmas will be useful for this analysis.

\begin{lemma}\label{lem:2-good-min-impl}
    Let $\cT = \{(*_1), \ldots, (*_t)\}$ be a $2$-good collection of $t$ linearly independent difference equalities, and suppose that $\cT$ minimally implies some difference equality $(*_\cT)$. Then the following two statements hold: 
    \begin{enumerate}[(i)]
        \item \label{item:2-good-mi-vars} Either $\cT$ involves exactly $2t + 2$ variables and each of these variables appears exactly twice among $(*_1)$, \ldots, $(*_t)$, $(*_\cT)$; or $\cT$ contains exactly $2t + 1$ variables, one appears exactly four times, and the others all appear exactly twice. 
        \item \label{item:2-good-signs} Let $*_1$, \ldots, $*_t$, $*_\cT$ be the contents of $(*_1)$, \ldots, $(*_t)$, $(*_\cT)$. Then there exist $\eps_1, \ldots, \eps_t \in \{\pm 1\}$ such that we have \[{*_\cT} = \eps_1{*_1} + \cdots + \eps_t{*_t}.\] 
    \end{enumerate}
\end{lemma}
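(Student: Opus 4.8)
The plan is to reduce everything to the study of the \emph{unique} linear dependency among the $t+1$ difference equalities $(*_1), \ldots, (*_t), (*_\cT)$ and then to play the $2$-lightness of $\cT$ (via Remark \ref{rmk:c-light-alt}) against a parity argument. For the setup, let $*_1, \ldots, *_t, *_\cT$ be the contents of these equations, and write $*_{t+1} := *_\cT$; recall that since $\cT$ is valid and AP-free, each of these $t+1$ difference equalities involves exactly $4$ distinct variables, all with coefficients $\pm 1$, so each content has coefficient-sum $0$. Since $\cT$ minimally implies $(*_\cT)$, we have $*_{t+1} = \eps_1 *_1 + \cdots + \eps_t *_t$ with every $\eps_i \neq 0$; and since $\cT$ is linearly independent while $(*_\cT)$ lies in its span, the $t+1$ contents span a $t$-dimensional space, so up to scaling this is the only linear relation among them. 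I would clear denominators and write it in primitive integral form $\delta_1 *_1 + \cdots + \delta_{t+1} *_{t+1} = 0$, with all $\delta_r \neq 0$ and $\gcd(\delta_1, \ldots, \delta_{t+1}) = 1$; then statement \ref{item:2-good-signs} becomes equivalent to showing that all $\abs{\delta_r}$ are equal (primitivity then forces them to equal $1$, so $\eps_i = -\delta_i/\delta_{t+1} = \pm 1$).

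For \ref{item:2-good-mi-vars}, I would first bound the number $m$ of variables involved. Lemma \ref{lem:gen-min-impl}\ref{item:gmi-few-total} gives $m \leq 2t+2$, with equality precisely when every variable appears twice among the $t+1$ equations, while Remark \ref{rmk:c-light-alt} applied to the set $\cT$ itself (which consists of $t$ linearly independent equations implied by the $2$-light configuration $\cT$) gives $m \geq 2t+1$. If $m = 2t+2$ we are in the first alternative. If $m = 2t+1$, then since each variable appears at least twice (by the argument in the proof of Lemma \ref{lem:gen-min-impl}\ref{item:gmi-few-total}) and there are $4(t+1)$ incidences, the surplus $4(t+1) - 2(2t+1) = 2$ is accounted for either by one variable appearing four times or by two variables appearing three times each; the crux is to exclude the latter. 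For that I would reduce the dependency modulo $2$: with $O = \{r : \delta_r \text{ odd}\}$ we get $\sum_{r \in O} *_r \equiv 0 \pmod 2$, and $O$ is a proper nonempty subset of $\{1, \ldots, t+1\}$ (nonempty by primitivity; proper because otherwise every variable would appear an even number of times, contradicting the assumed triple-variable). The equations $\{*_r : r \in O\}$ are linearly independent — they form a subset of $\cT$ when $t+1 \notin O$, and when $t+1 \in O$ any relation among them would be supported on the proper subset $O$, contradicting the uniqueness of $\delta$ (which has full support). On the other hand the congruence forces each variable to appear $0$ or $2$ times among these $\abs{O}$ equations (it appears an even number of times, and at most three times in total), so they involve exactly $\tfrac12 \cdot 4\abs{O} = 2\abs{O}$ variables — contradicting Remark \ref{rmk:c-light-alt}, which demands at least $2\abs{O}+1$. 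Hence in the $m = 2t+1$ case exactly one variable appears four times and the rest twice, which is the second alternative.

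For \ref{item:2-good-signs}, I would introduce the graph $G$ on vertex set $\{1, \ldots, t+1\}$ with one edge $\{r, r'\}$ for each \emph{light} variable, i.e.\ one appearing in exactly the two equations $(*_r)$ and $(*_{r'})$. Comparing the coefficients of that variable in the relation $\delta_1 *_1 + \cdots + \delta_{t+1} *_{t+1} = 0$ forces $\abs{\delta_r} = \abs{\delta_{r'}}$, so $\abs{\delta_\bullet}$ is constant on each connected component of $G$, and it suffices to prove $G$ is connected. Suppose $C$ were a component with $c$ equations and $C \subsetneq \{1, \ldots, t+1\}$. If no equation of $C$ involved a \emph{heavy} variable (one appearing at least three times), then all variables of $C$ would be light and internal to $C$, so $\{*_r : r \in C\}$ would involve exactly $2c$ variables; this contradicts Remark \ref{rmk:c-light-alt} when those $c$ equations are linearly independent (as they are when $t+1 \notin C$, and also when $t+1 \in C$ and the rank is $c$), and contradicts uniqueness/full support of $\delta$ when the rank is $c-1$ (the resulting relation would be supported on the proper subset $C$). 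So every component contains a heavy-variable equation; by \ref{item:2-good-mi-vars} this already settles the $m = 2t+2$ case, since there are no heavy variables. In the $m = 2t+1$ case, the unique heavy variable $x_*$ lies in exactly four equations, so every component meets those four, and for such a component $C$ a short incidence count shows $C$ involves $\tfrac12(4c - a) + 1 < 2c+1$ variables, where $a \geq 1$ is the number of equations of $C$ containing $x_*$; Remark \ref{rmk:c-light-alt} then forces $t+1 \in C$, i.e.\ $(*_\cT) \in C$ — and since $(*_\cT)$ lies in only one component, $G$ is connected. Thus all $\abs{\delta_r}$ agree, completing the proof.

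The main obstacles are the two ``impossibility'' arguments: excluding two triple-variables in \ref{item:2-good-mi-vars}, and proving connectivity of $G$ in \ref{item:2-good-signs}. In both, the leverage comes from combining the $2$-lightness bound of Remark \ref{rmk:c-light-alt} with the fact that $\delta$ is the unique dependency and has no zero entries; the mod-$2$ reduction is the trick that makes the first go through cleanly, while the second is largely incidence bookkeeping once the lone heavy variable has been isolated.
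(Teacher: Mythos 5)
Your proposal is correct, but it takes a genuinely different route from the paper. The paper proves the two parts in the opposite order: it first establishes only a weak form of (i), allowing the extra possibility that two variables each appear exactly three times; it then proves (ii) directly, using the same light-variable graph as you but reaching the contradiction more quickly --- if a component $\cT'$ omits $(*_\cT)$, the partial sum $\sum_{(*_r)\in\cT'}\eps_r *_r$ kills every twice-appearing variable, so it is a nonzero linear form in at most two variables with coefficient-sum zero, hence proportional to some $x_{i_1}-x_{i_2}$, contradicting validity; only afterwards is (i) upgraded to full strength, since (ii) forces every variable to appear an even number of times among the $t+1$ equations. You instead pin down (i) outright via a mod-$2$ reduction of the unique primitive integral dependency, which is a clean and self-contained trick (though it requires the separate check that the odd-coefficient equations $\{*_r : r\in O\}$ are linearly independent), and then run a component-by-component incidence count for (ii), splitting on $m=2t+2$ versus $m=2t+1$ around the lone heavy variable. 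Both approaches are sound; the paper's ordering lets (ii) do double duty so that (i) comes for free, while yours front-loads (i) at the cost of the heavy-variable bookkeeping in (ii). One small point worth making explicit: in the $m=2t+1$ count $\tfrac{1}{2}(4c-a)+1$, integrality of the variable count forces $a$ to be even (so $a\geq 2$ and the bound is $\leq 2c$); your strict inequality $<2c+1$ with $a\geq 1$ does suffice, but only after one observes the count is an integer, which deserves a sentence.
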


\begin{proof}
    We will first prove a weaker version of \ref{item:2-good-mi-vars} (with one additional possibility); then we will use this weaker version to prove \ref{item:2-good-signs}; finally, we will use \ref{item:2-good-signs} to eliminate the additional possibility and prove \ref{item:2-good-mi-vars}. 

    Let $m$ be the number of variables appearing in $\cT$, and let $e_1$, \ldots, $e_m$ be the numbers of equations among $(*_1)$, \ldots, $(*_t)$, $(*_\cT)$ that each variable appears in. 
    
    On one hand, $\cT$ consists of $t$ linearly independent equations on $m$ variables; so since $\cT$ is $2$-light, by Remark \ref{rmk:c-light-alt} we must have $m \geq 2t + 1$. 

    On the other hand, we can use the same idea as in the proof of Lemma \ref{lem:gen-min-impl} --- first, each equation involves exactly $4$ variables (and every variable appearing in $(*_\cT)$ also appears in $\cT$), so \[e_1 + \cdots + e_m = 4(t + 1).\] Meanwhile, since $\cT$ minimally implies $(*_\cT)$, there must exist nonzero $\eps_1, \ldots, \eps_t \in \RR$ such that 
    \begin{align}
        {*_\cT} = \eps_1{*_1} + \cdots + \eps_t{*_t}, \label{eqn:2-good-impl}
    \end{align}
    which means each of $e_1$, \ldots, $e_m$ must be at least $2$ --- any variable appearing in only one of $(*_1)$, \ldots, $(*_t)$, $(*_\cT)$ would not cancel out of \eqref{eqn:2-good-impl}. This immediately gives $m \leq 2t + 2$, so $m$ must be $2t + 1$ or $2t + 2$. Furthermore, if $m = 2t + 2$, then $e_1$, \ldots, $e_m$ must all be $2$; if $m = 2t + 1$, then either one of $e_1$, \ldots, $e_m$ is $4$ and the rest are $2$, or two are $3$ and the rest are $2$. In order to prove \ref{item:2-good-mi-vars}, it suffices to eliminate the last possibility; we will do so after proving \ref{item:2-good-signs}.

    We now prove \ref{item:2-good-signs}, i.e., that in \eqref{eqn:2-good-impl} the coefficients $\eps_1$, \ldots, $\eps_t$ must all be $\pm 1$. Note that we have already shown that all but at most $2$ variables appearing among $(*_1)$, \ldots, $(*_t)$, $(*_\cT)$ appear exactly twice; we will use this in our proof of \ref{item:2-good-signs}. 
    
    Consider the graph $\cG$ with vertex set $(*_1)$, \ldots, $(*_t)$, $(*_\cT)$, where two equations are adjacent if some variable appears in both of these equations and no others. If two equations are adjacent in $\cG$, then their coefficients in \eqref{eqn:2-good-impl} must have equal magnitude --- otherwise the shared variable would not cancel out of \eqref{eqn:2-good-impl}. So in order to prove \ref{item:2-good-signs}, it suffices to show that $\cG$ is connected. 

    Assume for contradiction that $\cG$ is not connected. Then it must have some connected component not containing $(*_\cT)$; let $\cT' \subseteq \cT$ be the set of equations this connected component consists of. For notational convenience, assume that $\cT' = \{(*_1), \ldots, (*_{t'})\}$ for some $t' \leq t$; then let  
    \begin{align}
        {*_{\cT'}} = \eps_1{*_1} + \cdots + \eps_{t'}{*_{t'}}.\label{eqn:tprime}
    \end{align}
    (In words, $*_{\cT'}$ is the partial sum of \eqref{eqn:2-good-impl} corresponding to only the equations in $\cT'$.) Then we also have 
    \begin{align}
        {*_{\cT'}} = {*_\cT} - \eps_{t' + 1}{*_{t' + 1}} - \cdots - \eps_t{*_t}.\label{eqn:tprime-alt}
    \end{align}
    Let $(*_{\cT'})$ be the equation ${*_{\cT'}} = 0$ (which is implied by $\cT$). 

    We claim that no variable that appears in exactly two of $(*_1)$, \ldots, $(*_t)$, $(*_\cT)$ can appear in $(*_{\cT'})$. For any such variable $x_i$, since $\cT'$ is a connected component of $\cG$, it must contain both or neither of the two equations that $x_i$ appears in. If $\cT'$ contains both, then $x_i$ does not appear in any term on the right-hand side of \eqref{eqn:tprime-alt}; if $\cT'$ contains neither, then $x_i$ does not appear in any term on the right-hand side of \eqref{eqn:tprime}. So in either case, $x_i$ does not appear in $(*_\cT')$. 

    This means at most two variables can appear in $(*_{\cT'})$ (namely, the ones that appear in more than two equations). But $*_{\cT'}$ cannot be $0$ (as the equations in $\cT'$ are linearly independent), and its coefficients must sum to $0$; this means it must be of the form $\eps x_{i_1} - \eps x_{i_2} = 0$ for some $\eps \neq 0$ and $i_1 \neq i_2$, contradicting the assumption that $\cT$ is valid. 

    Since we have obtained a contradiction, $\cG$ must in fact be connected, which proves \ref{item:2-good-signs}. 
    
    Finally, \ref{item:2-good-signs} implies that every variable is contained in an even number of $(*_1)$, \ldots, $(*_t)$, $(*_\cT)$. This eliminates the possibility that two variables appear in exactly $3$ equations, finishing the proof of \ref{item:2-good-mi-vars}.
\end{proof}

\begin{lemma}\label{lem:min-impl-unique}
    Let $\cT$ be a $2$-good collection of linearly independent difference equalities. Then there is at most one difference equality that $\cT$ minimally implies. 
\end{lemma}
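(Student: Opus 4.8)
The plan is to argue by contradiction: suppose the $2$-good linearly independent collection $\cT = \{(*_1), \ldots, (*_t)\}$ minimally implies two \emph{distinct} difference equalities $(*_\cT)$ and $(*'_\cT)$. Fix contents $*_1, \ldots, *_t$ for the equations of $\cT$. Since $\cT$ is $2$-good it is valid and AP-free, so every difference equality it implies --- in particular $(*_\cT)$ and $(*'_\cT)$ --- involves exactly $4$ distinct variables, and hence the contents $*_\cT$ and $*'_\cT$ have all coefficients in $\{-1, 0, 1\}$. Applying Lemma \ref{lem:2-good-min-impl}\ref{item:2-good-signs} to each of the two minimal implications, we may write
\[{*_\cT} = \sum_{i=1}^t \eps_i {*_i} \quad\text{and}\quad {*'_\cT} = \sum_{i=1}^t \eps'_i {*_i}\]
with all $\eps_i, \eps'_i \in \{\pm 1\}$ (using the same fixed contents $*_1, \ldots, *_t$ in both).

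Next I would compare the two sign vectors. Since the $*_i$ are linearly independent, ${*_\cT} = {*'_\cT}$ would force $\eps_i = \eps'_i$ for all $i$, while ${*_\cT} = -{*'_\cT}$ would force $\eps_i = -\eps'_i$ for all $i$; in either case $(*_\cT) = (*'_\cT)$, contrary to assumption. So the set $S = \{i : \eps'_i = -\eps_i\}$ must be a nonempty \emph{proper} subset of $\{1, \ldots, t\}$. Then
\[{*_\cT} + {*'_\cT} = \sum_{i=1}^t (\eps_i + \eps'_i)\,{*_i} = 2\sum_{i \notin S} \eps_i {*_i},\]
and the right-hand side divided by $2$ is a $\mathbb{Z}$-linear combination of the integer-coefficient vectors $*_i$; hence every coefficient of ${*_\cT} + {*'_\cT}$ is even. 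Because the coefficients of $*_\cT$ and $*'_\cT$ lie in $\{-1, 0, 1\}$, this means that for each variable $x_j$, its coefficient in $*_\cT$ and its coefficient in $*'_\cT$ are either both zero or both nonzero --- that is, $x_j$ appears in $(*_\cT)$ if and only if it appears in $(*'_\cT)$. Thus $(*_\cT)$ and $(*'_\cT)$ have exactly the same variable set, which has size $4$.

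This is the contradiction: $(*_\cT)$ and $(*'_\cT)$ are distinct difference equalities, and $\{(*_\cT), (*'_\cT)\}$ is valid and AP-free (being a collection of difference equalities implied by the $2$-good collection $\cT$), so Lemma \ref{lem:2-eqns-5-vars} guarantees they share \emph{at most} $2$ variables, not $4$. Hence the case $(*_\cT) \neq (*'_\cT)$ cannot occur, which proves the lemma. I do not expect a genuine obstacle here: the only point requiring care is the assertion that the contents of the implied difference equalities have coefficients in $\{-1, 0, 1\}$, which rests on the earlier observation that a difference equality implied by a valid, AP-free collection involves exactly four distinct variables; everything else is routine manipulation of the $\pm 1$ sign vectors furnished by Lemma \ref{lem:2-good-min-impl}\ref{item:2-good-signs}.
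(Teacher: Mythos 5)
Your proof is correct and ends at the same contradiction as the paper's (both implied difference equalities would share all $4$ of their variables, violating Lemma \ref{lem:2-eqns-5-vars}), but you get there by a slightly different route. The paper reads off the variable set of the minimally implied equality directly from the parity count in Lemma \ref{lem:2-good-min-impl}\ref{item:2-good-mi-vars}: since every variable appears an even number of times among $\cT \cup \{(*_\cT)\}$, the variables of $(*_\cT)$ are exactly those appearing an odd number of times in $\cT$ alone, so the variable set is determined by $\cT$ before any second implication enters the picture. You instead invoke the $\pm 1$ sign structure of Lemma \ref{lem:2-good-min-impl}\ref{item:2-good-signs}, form ${*_\cT} + {*'_\cT}$, and use an evenness argument on coefficients in $\{-1,0,1\}$ to conclude the two variable sets coincide. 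Both are legitimate; the paper's version is a touch shorter because it characterizes the variable set of \emph{any} minimally implied equality in one stroke, whereas yours is a pairwise comparison. One minor point worth noting explicitly in your writeup (you handle it correctly in substance): the contents $*_\cT$ and $*'_\cT$ are each defined only up to sign, so "$(*_\cT) = (*'_\cT)$" corresponds to $*_\cT = \pm *'_\cT$, which is exactly why ruling out $\eps' = \pm\eps$ componentwise suffices to make $S$ a nonempty proper subset.
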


\begin{proof}
    If $\cT$ minimally implies a difference equality $(*_\cT)$, then by Lemma \ref{lem:2-good-min-impl}, every variable appears an even number of times among $\cT$ and $(*_\cT)$, so the variables appearing in $(*_\cT)$ are precisely the ones that appear in an odd number of equations in $\cT$. Then if $\cT$ minimally implied two distinct difference equalities, these difference equalities would have to involve the same $4$ variables, contradicting Lemma \ref{lem:2-eqns-5-vars}. 
\end{proof}

\subsection{Proof of Lemma \ref{lem:2s-certify-equality}} \label{subsec:2s-certify-equality}

In this subsection, we prove Lemma \ref{lem:2s-certify-equality} (which analyzes the $2$-good collections $\cS$ that achieve equality or near-equality in Lemma \ref{lem:2s-certify}). Recall that we proved Lemma \ref{lem:2s-certify} by drawing `boxes' around all subsets of $\cS$ that minimally imply some difference equality involving $x_i$, and showing that each connected `blob' formed by these boxes must involve reasonably few variables. The idea of our proof of Lemma \ref{lem:2s-certify-equality} is essentially that if $\cS$ is $2$-good, then Lemma \ref{lem:2-good-min-impl} heavily restricts the structure of these boxes and blobs --- it will immediately imply that all boxes have size $1$ or $3$, and we will later show that the boxes of size $3$ must be disjoint. This additional structure makes it much simpler to analyze when equality or near-equality holds in the bounds used to prove Lemma \ref{lem:2s-certify}. 

First, by Lemma \ref{lem:2-good-min-impl}\ref{item:2-good-mi-vars}, if $\{(*_1), \ldots, (*_t)\}$ is a $2$-good collection of linearly independent difference equalities involving $x_i$ that minimally implies a difference equality $(*)$ also involving $x_i$, then $t$ must be $1$ or $3$, since $x_i$ appears in all $t + 1$ equations. (In the terminology used in the proof of Lemma \ref{lem:2s-certify}, this states that all boxes have size $1$ or $3$.) The case where $t = 1$ is straightforward to analyze --- $(*)$ must be the same equation as $(*_1)$. We now analyze the case where $t = 3$. We call such a set $\{(*_1), (*_2), (*_3)\}$ a \emph{$3$-implication at $i$} (or simply a \emph{$3$-implication}, if we do not need to specify $i$), and we say it \emph{produces} the difference equality $(*)$. Note that by Lemma \ref{lem:min-impl-unique}, each $3$-implication produces only one difference equality. 

\begin{lemma}\label{lem:3-impls}
    Let $\cT$ be a $3$-implication and $(*)$ the difference equality it produces. Let $\cG_3(\cT)$ be the graph with vertex set $\cT \cup \{(*)\}$ where two equations are adjacent if they are difference-aligned. Then $\cG_3(\cT)$ must be a $4$-vertex path. 
\end{lemma}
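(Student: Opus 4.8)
The plan is to use Lemma \ref{lem:2-good-min-impl} to force a completely rigid structure on $\cT \cup \{(*)\}$, and then finish with a short sign count. Write $\cT = \{(*_1), (*_2), (*_3)\}$ and let $(*)$ be the difference equality $\cT$ produces; since $\cT$ is a $3$-implication at $i$, all four of $(*_1), (*_2), (*_3), (*)$ involve $x_i$, and $\cT \cup \{(*)\}$ is $2$-good. First I would invoke Lemma \ref{lem:2-good-min-impl}\ref{item:2-good-mi-vars}: because $x_i$ occurs in all four equations, the ``$2t+2$ variables, each twice'' case is impossible, so exactly seven variables occur, $x_i$ occurs in all four, and each of the other six occurs in exactly two of the four. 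Each of those six variables therefore singles out an unordered pair of equations; since there are exactly $\binom{4}{2} = 6$ such pairs, Lemma \ref{lem:2-eqns-5-vars} --- applied to each two-element sub-collection of the $2$-good collection $\cT\cup\{(*)\}$, which hence shares at most two variables --- shows this correspondence is a bijection: every pair of the four equations shares $x_i$ and exactly one further variable. In particular, for a pair sharing $x_i$ and $x_e$, being difference-aligned is equivalent to being difference-aligned at $(i,e)$ (i.e.\ $x_i$ and $x_e$ having opposite signs in both equations), and similarly for sum-aligned; so on these four equations the relation ``difference-aligned'' is exactly ``the shared non-$x_i$ variable disagrees in sign with $x_i$ in both equations of the pair.''

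Next I would normalize signs. By Lemma \ref{lem:2-good-min-impl}\ref{item:2-good-signs}, after rechoosing contents we may assume ${*} = {*_1}+{*_2}+{*_3}$, and after possibly negating all four contents we may assume the coefficient of $x_i$ in $*$ is $+1$; then exactly two of $(*_1),(*_2),(*_3)$ have $x_i$-coefficient $+1$ and one has $-1$, say $(*_3)$. The key elementary fact is that in any difference equality involving $x_i$ (four distinct variables, two coefficients of each sign) exactly one of the other three variables agrees in sign with $x_i$ and the other two disagree. So on the complete graph $K_4$ with vertex set $\{(*_1),(*_2),(*_3),(*)\}$, labeling each edge by its shared non-$x_i$ variable and recording at each endpoint whether that variable agrees or disagrees with $x_i$ there, an edge is difference-aligned iff it is ``disagree'' at both endpoints, sum-aligned iff ``agree'' at both, and ``neither'' otherwise; moreover every vertex shows exactly one ``agree'' and two ``disagree'' among its three incident edges.

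I would then run a short case check from ${*} = {*_1}+{*_2}+{*_3}$: the shared variable of $\{(*_1),(*_2)\}$ must cancel between $(*_1)$ and $(*_2)$, and the shared variable of $\{(*_3),(*)\}$ has the same coefficient in $(*_3)$ and in $(*)$; in both situations the $x_i$-signs force ``agree'' at one endpoint and ``disagree'' at the other, so these two pairs are ``neither'', while each of the remaining four pairs is forced to be sum- or difference-aligned. Since $\{(*_1),(*_2)\}$ and $\{(*_3),(*)\}$ form a perfect matching of $K_4$, the remaining four edges form a $4$-cycle. Finally, if $k$ of these four cycle-edges are difference-aligned (hence $4-k$ are sum-aligned), counting ``agree'' tokens gives $2(4-k) + 2\cdot 1 + k\cdot 0 = 10-2k$ on one hand and $4\cdot 1 = 4$ on the other, so $k=3$. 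Thus $\cG_3(\cT)$ has exactly three edges, all lying on the $4$-cycle, and three edges of a $4$-cycle form a path through all four vertices --- so $\cG_3(\cT)$ is a $4$-vertex path.

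The main obstacle is this rigidity extraction: establishing that the shared-variable incidence pattern of the four equations is exactly $K_4$, and that the two ``diagonal'' pairs are always ``neither''. Once that is in place the sign count is routine; the care needed is in combining Lemma \ref{lem:2-good-min-impl}, Lemma \ref{lem:2-eqns-5-vars}, and the normalization ${*} = {*_1}+{*_2}+{*_3}$, and in checking that each two-element sub-collection of $\cT\cup\{(*)\}$ is genuinely $2$-good so that Lemma \ref{lem:2-eqns-5-vars} applies.
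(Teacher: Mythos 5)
Your proof is correct, and it takes a genuinely different finishing route from the paper's. Both arguments rely on the same two ingredients --- Lemma \ref{lem:2-good-min-impl} and Lemma \ref{lem:2-eqns-5-vars} --- to establish the rigid structure: seven variables in total, $x_i$ appearing in all four equations, and a bijection between the six non-$x_i$ variables and the six unordered pairs of equations, so that every pair shares $x_i$ and exactly one further variable. From there, the paper normalizes so that $x_i$ has coefficient $+1$ in all four contents, deduces ${*} = {*_1} + {*_2} - {*_3}$ after a WLOG, writes $*_1$ and $*_2$ out explicitly, and checks each pair of equations directly for difference-alignment. You instead normalize to ${*} = {*_1} + {*_2} + {*_3}$ (rechoosing contents via Lemma \ref{lem:2-good-min-impl}\ref{item:2-good-signs}, letting the $x_i$-sign vary across the $*_j$) and run a double-count of ``agree'' tokens over the six edges of $K_4$: each vertex contributes exactly one agree (since a four-variable difference equality has exactly one other variable agreeing in sign with $x_i$ and two disagreeing); the matching edges $\{(*_1),(*_2)\}$ and $\{(*_3),(*)\}$ are forced to be neither sum- nor difference-aligned and contribute one agree each; and the four cycle edges each contribute $0$ or $2$. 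Equating the two counts, $4 = 10 - 2k$, yields exactly three difference-aligned cycle edges, which form a Hamiltonian path in the $4$-cycle complementary to the matching. Your version replaces the paper's explicit WLOG casework with a shorter, more symmetric counting finish; the paper's version has the side benefit of producing the explicit example displayed in Figure \ref{fig:3-impl-graph}. Both are valid and of comparable length.
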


\begin{proof}
    Let $\cT = \{(*_1), (*_2), (*_3)\}$, and suppose that $\cT$ is a $3$-implication at $1$ involving the $7$ variables $x_1$, \ldots, $x_7$ (by Lemma \ref{lem:2-good-min-impl}\ref{item:2-good-mi-vars}, any $3$-implication involves exactly $7$ variables). Let $*_1$, $*_2$, $*_3$, $*$ be the contents of $(*_1)$, $(*_2)$, $(*_3)$, $(*)$; we may assume that $x_1$ has a coefficient of $+1$ in each. Then by Lemma \ref{lem:2-good-min-impl}\ref{item:2-good-signs}, we can write ${*} = \eps_1{*_1} + \eps_2{*_2} + \eps_3{*_3}$ for some $\eps_1, \eps_2, \eps_3 \in \{\pm 1\}$. Exactly two of $\eps_1$, $\eps_2$, $\eps_3$ must be $+1$, so we can assume without loss of generality that  
    \begin{align}
        {*} = {*_1} + {*_2} - {*_3}.\label{eqn:3-impl}
    \end{align}

    First note that each of the $6$ variables $x_2$, \ldots, $x_7$ must be shared by exactly one pair of $(*_1)$, $(*_2)$, $(*_3)$, $(*)$; meanwhile, each of the $\binom{4}{2} = 6$ pairs of equations shares at most one such variable by Lemma \ref{lem:2-eqns-5-vars} (since the pair already shares $x_1$). This means each pair of equations must share \emph{exactly} one variable. 

    Then the variable shared by $(*_1)$ and $(*_2)$ must have opposite coefficients in $*_1$ and $*_2$, so we can assume without loss of generality that
    \begin{alignat*}{10}
        {*_1} &= x_1 && - x_2 - x_3 + x_4 && \\
        {*_2} &= x_1 && + x_2 && -x_5 - x_6.
    \end{alignat*}
    Then one of $x_5$ and $x_6$ must appear in $(*_3)$, and the other must appear in $(*)$. Both must appear with opposite sign as $x_1$ in their respective equations in order to cancel out of \eqref{eqn:3-impl}, so $(*_2)$ must be difference-aligned with both $(*_3)$ and $(*)$. For the same reason, $(*_1)$ is difference-aligned with whichever of $(*_3)$ and $(*)$ involves $x_3$ (and is not difference-aligned with either of the other two equations). Finally, $(*_3)$ and $(*)$ cannot be difference-aligned for the same reason that $(*_1)$ and $(*_2)$ cannot be --- the shared variable $x_7$ must have opposite coefficients in $*_3$ and $*_4$. So $\cG_3(\cT)$ is a $4$-vertex path. 
\end{proof}

Figure \ref{fig:3-impl-graph} shows an example of the graph $\cG_3(\cT)$. In fact, by slightly extending the argument in our proof of Lemma \ref{lem:3-impls}, one can show that \emph{any} $3$-implication must look like the one in Figure \ref{fig:3-impl-graph}, up to swapping which $3$-equation subset of the graph we consider to be $\cT$ (and, of course, renaming the variables). 

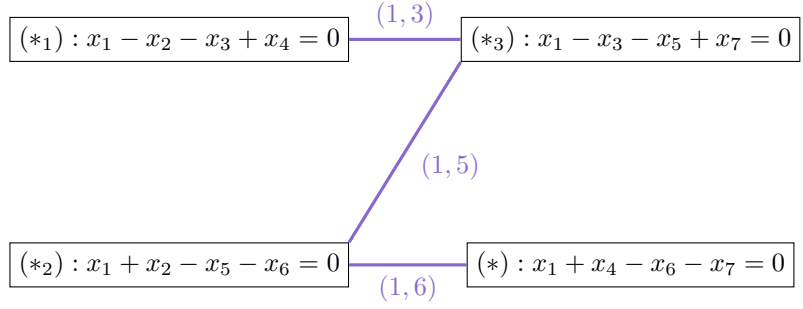
\begin{figure}[ht]
    \centering 
    \begin{tikzpicture}
        \node (1) [draw] at (0, 0) {$(*_1) : x_1 - x_2 - x_3 + x_4 = 0$};
        \node (2) [draw] at (0, -3) {$(*_2) : x_1 + x_2 - x_5 - x_6 = 0$};
        \node (3) [draw] at (6, 0) {$(*_3) : x_1 - x_3 - x_5 + x_7 = 0$};
        \node (4) [draw] at (6, -3) {$(*) : x_1 + x_4 - x_6 - x_7 = 0$};
        \begin{scope}[MediumPurple3, very thick]
            \draw (1) to node [midway, above] {$(1, 3)$} (3);
            \draw (2) to node [midway, below] {$(1, 6)$} (4);
            \draw (2.north east) to node [pos = 0.55, below right] {$(1, 5)$} (3.south west);
        \end{scope}
    \end{tikzpicture}
    \caption{This figure shows an example of a $3$-implication $\cT$ at $1$ and the corresponding graph $\cG_3(\cT)$; here each edge of $\cG_3(\cT)$ is labelled with the pair $(1, j)$ at which its endpoints are difference-aligned.}
    \label{fig:3-impl-graph}
\end{figure}

Lemma \ref{lem:3-impls} describes how an individual $3$-implication can look; the following lemma describes how two $3$-implications can interact with each other. 

\begin{lemma}\label{lem:3-impls-disjoint}
    Fix $i \in \{1, \ldots, k\}$. Let $\cS$ be a $2$-good collection of linearly independent difference equalities involving $x_i$, and suppose that $\cT_1$ and $\cT_2$ are distinct $3$-implications contained in $\cS$. Then $\cT_1$ and $\cT_2$ are disjoint (i.e., they have no equations in common). 
\end{lemma}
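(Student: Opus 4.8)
\emph{Proof proposal.} The plan is to argue by contradiction via a variable count. Suppose $\cT_1 \ne \cT_2$ but $\cT_1 \cap \cT_2 \ne \emptyset$. Since $\cT_1$ and $\cT_2$ are distinct $3$-element sets sharing at least one equation, they share exactly one or exactly two equations (sharing all three would force $\cT_1 = \cT_2$), so $\cT := \cT_1 \cup \cT_2 \subseteq \cS$ has either $5$ or $4$ equations and, being a subcollection of $\cS$, is linearly independent. The three facts I would feed in are: every difference equality in $\cS$ involves exactly $4$ variables and any two of them share at most $2$ variables (since $\cS$ is valid and AP-free, the latter via Lemma \ref{lem:2-eqns-5-vars}); every $3$-implication involves exactly $7$ variables (Lemma \ref{lem:2-good-min-impl}\ref{item:2-good-mi-vars}, with the ``$2t+2$'' alternative excluded because $x_i$ appears in all four of the relevant equations); and, since $\cS$ is $2$-good hence $2$-light, any $t$-element subcollection of $\cS$ involves at least $2t+1$ variables, by Remark \ref{rmk:c-light-alt}.

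Then I would split into the two cases. If $\cT_1$ and $\cT_2$ share a single equation $A$, then $\abs{\cT} = 5$; the variables appearing in $\cT_1$ form a $7$-element set, as do those appearing in $\cT_2$, and both of these sets contain the $4$ variables of $A$, so $\cT$ involves at most $7 + 7 - 4 = 10$ variables, contradicting the lower bound of $11$ from $2$-lightness. If instead $\cT_1 = \{A, B, C\}$ and $\cT_2 = \{A, B, D\}$ with $C \ne D$, then $\abs{\cT} = 4$; writing $v$ for the number of variables appearing in $A$ or $B$, we have $v \ge 4 + 4 - 2 = 6$, and since $\cT_1$ involves exactly $7$ variables, $C$ contributes exactly $7 - v$ variables outside those of $A$ and $B$, and likewise for $D$ in $\cT_2$, so $\cT$ involves at most $v + (7 - v) + (7 - v) = 14 - v \le 8$ variables, contradicting the lower bound of $9$ from $2$-lightness. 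In both cases we reach a contradiction, so $\cT_1$ and $\cT_2$ must be disjoint.

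I do not expect a genuine obstacle here: the structural heavy lifting is already done by Lemmas \ref{lem:2-good-min-impl} and \ref{lem:2-eqns-5-vars}, and what remains is careful bookkeeping. The one point to watch is to use the correct count of $7$ (rather than $8$) variables per $3$-implication --- equivalently, to remember that the difference equality a $3$-implication produces lives on $4$ of those same $7$ variables and so introduces nothing new --- and to make sure that the collection to which the $2$-light bound is applied is genuinely linearly independent, which it is precisely because it sits inside $\cS$.
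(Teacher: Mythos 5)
Your proposal is correct and follows essentially the same route as the paper: both split into the one-shared-equation and two-shared-equations cases, use Lemma \ref{lem:2-good-min-impl}\ref{item:2-good-mi-vars} to pin each $3$-implication at $7$ variables, invoke Lemma \ref{lem:2-eqns-5-vars} for the $\ge 6$ count in the two-equation case, and contradict the $2t+1$ lower bound from Remark \ref{rmk:c-light-alt}. Your $14 - v \le 8$ bookkeeping in the second case is just a rewording of the paper's inclusion–exclusion $7 + 7 - 6 = 8$.
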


\begin{proof}
    Assume that $\cT_1$ and $\cT_2$ are not disjoint, so they have $1$ or $2$ equations in common. 

    First suppose they have exactly $1$ equation in common. Each of $\cT_1$ and $\cT_2$ involves $7$ variables, and the shared equation involves $4$; this means $\cT_1 \cup \cT_2$ involves at most $7 + 7 - 4 = 10$ variables. Then $\cT_1 \cup \cT_2$ contains $5$ linearly independent equations on at most $10$ variables; this contradicts the assumption that $\cS$ is $2$-light (by Remark \ref{rmk:c-light-alt}).

    Now suppose they have exactly $2$ equations in common. Then by Lemma \ref{lem:2-eqns-5-vars}, the $2$ shared equations together involve at least $6$ variables; this means $\cT_1 \cup \cT_2$ involves at most $7 + 7 - 6 = 8$ variables. But then $\cT_1 \cup \cT_2$ contains $4$ linearly independent equations on at most $8$ variables; this again contradicts the assumption that $\cS$ is $2$-light. 
\end{proof}

We now have all the pieces needed to prove Lemma \ref{lem:2s-certify-equality}. We first prove the following slightly more general statement, from which it will be easy to deduce Lemma \ref{lem:2s-certify-equality}. 

\begin{lemma}\label{lem:sprime-certify}
    Fix $i \in \{1, \ldots, k\}$, and let $\cS$ be a $2$-good collection consisting of $s$ linearly independent difference equalities that all involve $x_i$. Then we can find some $\cS'$, also consisting of $s$ linearly independent difference equalities that all involve $x_i$, such that $\cS'$ is equivalent to $\cS$ and such that for each index $j \neq i$, $\cS$ $i$-certifies $j$ if and only if $x_i$ and $x_j$ appear with opposite sign in an equation in $\cS'$.  
\end{lemma}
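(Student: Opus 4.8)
The plan is to run the ``box/blob'' machinery from the proof of Lemma~\ref{lem:2s-certify}, but use $2$-goodness to pin down its structure exactly. First I would observe that every \emph{box} (a subset of $\cS$ minimally implying a difference equality involving $x_i$) has size $1$ or $3$: by Lemma~\ref{lem:2-good-min-impl}\ref{item:2-good-signs} every variable appears an even number of times among the equations of a box together with the equation it produces, and $x_i$ appears in all of them, while Lemma~\ref{lem:2-good-min-impl}\ref{item:2-good-mi-vars} (at most one variable appears more than twice, hence at most four times) forces at most four equations; so the box has $1$ or $3$ equations. The size-$3$ boxes are precisely the $3$-implications contained in $\cS$, and by Lemma~\ref{lem:3-impls-disjoint} these are pairwise disjoint; enumerate them as $\cT_1, \ldots, \cT_r$ and set $\cS_0 = \cS \setminus (\cT_1 \cup \cdots \cup \cT_r)$, so that $\cS = \cS_0 \sqcup \cT_1 \sqcup \cdots \sqcup \cT_r$.

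Next I would identify the set $J$ of indices $i$-certified by $\cS$. For a difference equality $(*)$ that involves $x_i$ (and then involves $4$ distinct variables, as $\cS$ is $2$-good), write $\operatorname{opp}_i((*))$ for the two-element set of indices $j$ with $x_i, x_j$ of opposite sign in $(*)$. Every difference equality implied by $\cS$ and involving $x_i$ is minimally implied by some box; a size-$1$ box minimally implies only its own equation, and a size-$3$ box is some $\cT_p$, which minimally implies only its production $(*_{\cT_p})$ by Lemma~\ref{lem:min-impl-unique}. Hence
\[ J \;=\; \bigcup_{(*) \in \cS} \operatorname{opp}_i((*)) \;\cup\; \bigcup_{p=1}^r \operatorname{opp}_i\big((*_{\cT_p})\big) \;=\; \bigcup_{(*) \in \cS_0} \operatorname{opp}_i((*)) \;\cup\; \bigcup_{p=1}^r \ \bigcup_{(*) \in \cT_p \cup \{(*_{\cT_p})\}} \operatorname{opp}_i((*)), \]
using the disjoint decomposition of $\cS$ in the second equality.

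The construction of $\cS'$ is then local to each $3$-implication: keep $\cS_0$, and replace each $\cT_p$ by a carefully chosen $3$-element subset $\cT_p'$ of $\cT_p \cup \{(*_{\cT_p})\}$. By Lemma~\ref{lem:3-impls}, the four equations of $\cT_p \cup \{(*_{\cT_p})\}$ form a $4$-vertex path $\cG_3(\cT_p)$ under difference-alignment. Here an edge means (by Lemma~\ref{lem:2-eqns-5-vars}, since the two equations already share $x_i$) that the two equations share exactly one further variable, namely the index at which they are difference-aligned, so their $\operatorname{opp}_i$-sets meet in exactly that one index; a non-edge means their $\operatorname{opp}_i$-sets are disjoint. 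A short computation then shows that deleting a \emph{middle} vertex of a $4$-vertex path preserves the union of $\operatorname{opp}_i$ over the remaining three, while deleting an endpoint loses one index. So I take $\cT_p' = \cT_p$ when $(*_{\cT_p})$ is a middle vertex (then $\cT_p$ is exactly ``delete a middle vertex''), and otherwise I take $\cT_p'$ to be the three vertices obtained by deleting a middle vertex of the path (which then consists of $(*_{\cT_p})$ and two equations of $\cT_p$). In all cases $\bigcup_{(*) \in \cT_p'} \operatorname{opp}_i((*)) = \bigcup_{(*) \in \cT_p \cup \{(*_{\cT_p})\}} \operatorname{opp}_i((*))$, and since any three of the four equations span the same $3$-dimensional subspace, $\operatorname{span}(\cT_p') = \operatorname{span}(\cT_p)$.

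Finally I would assemble $\cS' = \cS_0 \cup \cT_1' \cup \cdots \cup \cT_r'$ and verify the conclusion. Equivalence with $\cS$ holds because $\operatorname{span}(\cS') = \operatorname{span}(\cS_0) + \sum_p \operatorname{span}(\cT_p') = \operatorname{span}(\cS_0) + \sum_p \operatorname{span}(\cT_p) = \operatorname{span}(\cS)$; counting gives $|\cS'| = |\cS_0| + 3r = s$, so $\cS'$ is a linearly independent collection of $s$ difference equalities, each involving $x_i$ (equations of $\cS_0$ do, and each $\cT_p'$ consists of equations from $\cT_p$ or the production $(*_{\cT_p})$, all of which involve $x_i$). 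Combining the two displays, $\bigcup_{(*) \in \cS'} \operatorname{opp}_i((*)) = \bigcup_{(*) \in \cS_0}\operatorname{opp}_i((*)) \cup \bigcup_{p} \bigcup_{(*) \in \cT_p'} \operatorname{opp}_i((*)) = J$, which is exactly the statement that $\cS$ $i$-certifies $j$ if and only if $x_i$ and $x_j$ appear with opposite sign in some equation of $\cS'$. The step I expect to be the main obstacle is the bookkeeping behind the formula for $J$ --- in particular making airtight that the only difference equalities implied by $\cS$ and involving $x_i$ are the equations of $\cS$ themselves and the $r$ productions $(*_{\cT_p})$ --- since this relies on the size-$1$/size-$3$ dichotomy for boxes, the uniqueness in Lemma~\ref{lem:min-impl-unique}, and the disjointness in Lemma~\ref{lem:3-impls-disjoint} (so that ``all $3$-implications in $\cS$'' coincides with the disjoint list $\cT_1, \ldots, \cT_r$); the $4$-path computation itself is routine once the edge/non-edge dictionary is stated correctly.
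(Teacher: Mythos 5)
Your proof is correct and takes essentially the same approach as the paper's: reduce to the size-$1$/size-$3$ dichotomy for boxes via Lemma~\ref{lem:2-good-min-impl}, use Lemma~\ref{lem:3-impls-disjoint} to get a disjoint decomposition $\cS = \cS_0 \sqcup \cT_1 \sqcup \cdots \sqcup \cT_r$, and for each $3$-implication replace it by the three equations of $\cG_3(\cT_p)$ remaining after deleting a degree-$2$ vertex (the paper states this directly; your ``delete a middle vertex'' case split with the explicit $4$-path computation is just a slightly more spelled-out version of the paper's argument that a degree-$2$ vertex has both its opposite-sign indices covered by its two neighbors). Your worry about the bookkeeping for $J$ is resolved exactly as you outline, and the linear-algebra check ($\operatorname{span}(\cS') = \operatorname{span}(\cS)$ of dimension $s$ with $|\cS'| = s$ forces independence) is fine.
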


\begin{proof}
    Note that every difference equality involving $x_i$ implied by $\cS$ either is already present in $\cS$ or is produced by a $3$-implication. Furthermore, the $3$-implications in $\cS$ are all disjoint (by Lemma \ref{lem:3-impls-disjoint}), and each produces only one difference equality (by Lemma \ref{lem:min-impl-unique}). 

    We now define $\cS'$ in the following way: 
    \begin{itemize}
        \item For each equation $(*)$ in $\cS$ \emph{not} contained in any $3$-implication, we simply place $(*)$ into $\cS'$.
        \item For each $3$-implication $\cT \subseteq \cS$, let $(*_\cT)$ be the equation it produces. By Lemma \ref{lem:3-impls}, the graph $\cG_3(\cT)$ on $\cT \cup \{(*_\cT)\}$ is a $4$-vertex path by Lemma \ref{lem:3-impls}; we consider one vertex of degree $2$, and place the three \emph{other} equations in $\cT \cup \{(*_\cT)\}$ into $\cS'$. 
    \end{itemize}
    An example of this process is shown in Figure \ref{fig:sprime-ex}. 

    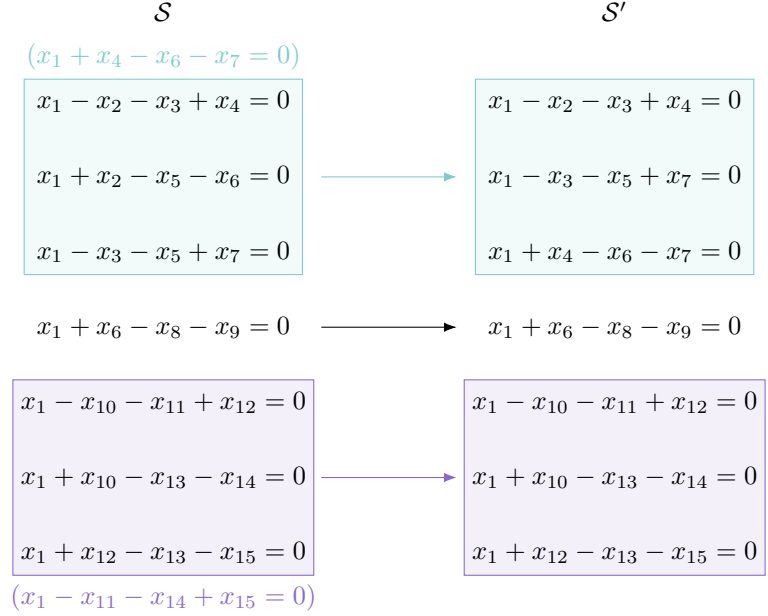
\begin{figure}[ht]
        \centering 
        \begin{tikzpicture}
            \filldraw [draw = DarkSlateGray3, fill = DarkSlateGray3!10] (-1.85, -2.3) rectangle (1.85, 0.3);
            \filldraw [draw = MediumPurple3, fill = MediumPurple3!10] (-2, -6.3) rectangle (2, -3.7);
            \node at (0, 0) {$x_1 - x_2 - x_3 + x_4 = 0$};
            \node at (0, -1) {$x_1 + x_2 - x_5 - x_6 = 0$};
            \node at (0, -2) {$x_1 - x_3 - x_5 + x_7 = 0$};
            
            \node at (0, -3) {$x_1 + x_6 - x_8 - x_9 = 0$};
            \node at (0, -4) {$x_1 - x_{10} - x_{11} + x_{12} = 0$};
            \node at (0, -5) {$x_1 + x_{10} - x_{13} - x_{14} = 0$};
            \node at (0, -6) {$x_1 + x_{12} - x_{13} - x_{15} = 0$};
            
            \node [DarkSlateGray3] at (0, 0.6) {$(x_1 + x_4 - x_6 - x_7 = 0)$};
            \node [MediumPurple3] at (0, -6.6) {$(x_1 - x_{11} - x_{14} + x_{15} = 0)$};
            \node at (0, 1.2) {$\cS$};
            \begin{scope}[xshift = 6cm]
                \filldraw [draw = DarkSlateGray3, fill = DarkSlateGray3!10] (-1.85, -2.3) rectangle (1.85, 0.3);
                \filldraw [draw = MediumPurple3, fill = MediumPurple3!10] (-2, -6.3) rectangle (2, -3.7);
                \node at (0, 1.2) {$\cS'$};
                \node at (0, 0) {$x_1 - x_2 - x_3 + x_4 = 0$};
                \node at (0, -1) {$x_1 - x_3 - x_5 + x_7 = 0$};
                \node at (0, -2) {$x_1 + x_4 - x_6 - x_7 = 0$};
                \node at (0, -3) {$x_1 + x_6 - x_8 - x_9 = 0$};
                \node at (0, -4) {$x_1 - x_{10} - x_{11} + x_{12} = 0$};
                \node at (0, -5) {$x_1 + x_{10} - x_{13} - x_{14} = 0$};
                \node at (0, -6) {$x_1 + x_{12} - x_{13} - x_{15} = 0$};
            \end{scope}
            \draw [DarkSlateGray3, -Latex] (2.1, -1) -- (3.9, -1);
            \draw [-Latex] (2.1, -3) -- (3.9, -3);
            \draw [MediumPurple3, -Latex] (2.1, -5) -- (3.9, -5);
        \end{tikzpicture}
        \caption{This figure shows an example of a possible collection $\cS$ (on the left-hand side) and one way to choose the corresponding $\cS'$ (on the right-hand side), with $i = 1$. The $3$-implications in $\cS$ are shown in boxes and labelled with the difference equalities they produce (shown in parentheses --- the equations in parentheses are not part of $\cS$). In this example, for the first $3$-implication (boxed in blue) we leave out $x_1 + x_2 - x_5 - x_6 = 0$, and for the second $3$-implication (boxed in purple) we leave out $x_1 - x_{11} - x_{14} + x_{15} = 0$.}
        \label{fig:sprime-ex}
    \end{figure}

    Then it is clear that $\cS'$ consists of $s$ linearly independent difference equalities involving $x_i$, and that $\cS'$ is equivalent to $\cS$ (note that if $\cT$ is a $3$-implication producing $(*_\cT)$, then all $3$-equation subsets of $\cT \cup \{(*_\cT)\}$ are equivalent). 
    
    Finally, we show that $\cS$ $i$-certifies an index $j$ if and only if $x_i$ and $x_j$ appear with opposite sign in some equation in $\cS'$. By definition, $\cS$ $i$-certifies $j$ if and only if $x_i$ and $x_j$ appear with opposite sign in some difference equality implied by $\cS$. Of course every equation in $\cS'$ is implied by $\cS$, so if $x_i$ and $x_j$ appear with opposite sign in an equation in $\cS'$, then $\cS$ $i$-certifies $j$. Conversely, if $(*)$ is a difference equality implied by $\cS$ that is \emph{not} in $\cS'$, then it must be a vertex of degree $2$ in $\cG_3(\cT)$ for some $3$-implication $\cT \subseteq \cS$. This means it is difference-aligned with two other equations in $\cG_3(\cT)$, so both variables $x_j$ that appear with opposite sign as $x_i$ in $(*)$ must also appear with opposite sign as $x_i$ in one of the other three equations in $\cG_3(\cT)$. (Note that the two equations difference-aligned with $(*)$ in $\cG_3(\cT)$ cannot both be difference-aligned with $(*)$ at the \emph{same} pair $(i, j)$, as then they would be difference-aligned with each other and $\cG_3(\cT)$ would contain a triangle.) All three of these equations are present in $\cS'$, so $\cS'$ has the desired property. 
\end{proof}

\begin{proof}[Proof of Lemma \ref{lem:2s-certify-equality}]
    Let $\cS'$ be the result of applying Lemma \ref{lem:sprime-certify} to $\cS$. Then it suffices to verify \ref{item:certify-2s} and \ref{item:certify-one-less}, given that $\cS$ $i$-certifies $j$ if and only if $x_i$ and $x_j$ appear with opposite sign in some equation in $\cS'$. 

    For \ref{item:certify-2s}, we are given that there are exactly $2s$ indices $j$ such that $x_i$ and $x_j$ appear with opposite sign in some equation in $\cS'$. Since each of the $s$ equations in $\cS'$ has exactly two variables that appear with opposite sign as $x_i$, this means for each such $j$, the variable $x_j$ appears with opposite sign in only one equation in $\cS'$, and therefore no two equations in $\cS'$ are difference-aligned. (Recall that by Lemma \ref{lem:2-eqns-5-vars}, since any two equations in $\cS'$ share $x_i$, if they are difference-aligned then they must be difference-aligned at $(i, j)$ for some $j$, meaning that some variable $x_j$ must appear with opposite sign as $x_i$ in both.)

    Similarly, for \ref{item:certify-one-less}, we are given that there are exactly $2s - 1$ indices $j$ such that $x_i$ and $x_j$ appear with opposite sign in some equation in $\cS'$. This means one such variable $x_j$ must appear with opposite sign as $x_i$ in exactly two equations in $\cS'$, and the rest must appear with opposite sign as $x_i$ in only one; so exactly one pair of equations in $\cS'$ is difference-aligned.
\end{proof}

\subsection{Proof of Lemma \ref{lem:impls-not-i}} \label{subsec:impls-not-i}

In this subsection, we prove Lemma \ref{lem:impls-not-i}. This proof will involve analyzing collections of linearly independent difference equalities involving $x_i$ that minimally imply some difference equality \emph{not} involving $x_i$. Similarly to in the previous subsection, the main idea is that Lemma \ref{lem:2-good-min-impl} heavily restricts how such a collection can look, which will allow us to bound the number of such collections. 

First, by Lemma \ref{lem:2-good-min-impl}\ref{item:2-good-mi-vars}, if $\{(*_1), \ldots, (*_t)\}$ is a $2$-good collection of linearly independent difference equalities involving $x_i$ that minimally implies some difference equality $(*)$ \emph{not} involving $x_i$, then $t$ must be $2$ or $4$ (as $x_i$ appears in exactly $t$ of these equations). We call such a collection $\{(*_1), \ldots, (*_t)\}$ a \emph{$2$-implication at $i$} (or simply a \emph{$2$-implication}, if we do not need to specify $i$) if $t = 2$, and a \emph{$4$-implication at $i$} (or simply a \emph{$4$-implication}) if $t = 4$; and we say it \emph{produces} the difference equality $(*)$. Again by Lemma \ref{lem:min-impl-unique}, any $2$-implication or $4$-implication produces only one difference equality. 

First, $2$-implications are easy to understand. 

\begin{lemma}\label{lem:2-impls}
    The $2$ equations of any $2$-implication are either difference-aligned or sum-aligned. 
\end{lemma}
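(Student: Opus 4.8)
The plan is to unwind the definition of a $2$-implication using the structural results in Lemma \ref{lem:2-good-min-impl}. Let $\cT = \{(*_1), (*_2)\}$ be a $2$-implication at $i$, producing a difference equality $(*)$ that does not involve $x_i$. Since $\cT$ is $2$-good (in particular valid and AP-free), every difference equality it implies --- and hence each of $(*_1)$, $(*_2)$, $(*)$ --- involves exactly $4$ distinct variables, with all coefficients equal to $\pm 1$. First I would apply Lemma \ref{lem:2-good-min-impl}\ref{item:2-good-mi-vars} with $t = 2$: the alternative ``$2t+1$ variables, one appearing four times'' is vacuous here, since a variable can appear at most once in each of the three equations $(*_1)$, $(*_2)$, $(*)$ and hence at most three times in total; therefore $\cT$ involves exactly $6$ variables, each appearing in exactly two of $(*_1)$, $(*_2)$, $(*)$. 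Lemma \ref{lem:2-good-min-impl}\ref{item:2-good-signs} then gives ${*} = \eps_1{*_1} + \eps_2{*_2}$ with $\eps_1, \eps_2 \in \{\pm 1\}$, where $*_1$, $*_2$, $*$ denote the contents of $(*_1)$, $(*_2)$, $(*)$.

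Next I would normalize signs: write $*_1$ and $*_2$ so that $x_i$ has coefficient $+1$ in each. Since $x_i$ appears in both $(*_1)$ and $(*_2)$ but not in $(*)$, its coefficient must cancel in $\eps_1{*_1} + \eps_2{*_2}$, which forces $\eps_2 = -\eps_1$; replacing $(*)$ by its negative if necessary, we may assume ${*} = {*_1} - {*_2}$. Now $(*_1)$ and $(*_2)$ each involve $4$ variables and share $x_i$, so by Lemma \ref{lem:2-eqns-5-vars} they share at most $2$ variables; but if they shared only $x_i$, then $\cT$ would involve $4 + 4 - 1 = 7$ variables, contradicting the count of $6$ above. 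Hence $(*_1)$ and $(*_2)$ share exactly two variables, $x_i$ and some $x_j$.

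Finally, since $x_j$ lies in both $(*_1)$ and $(*_2)$, it appears in exactly those two of the three equations $(*_1)$, $(*_2)$, $(*)$, so it does not appear in $(*) = (*_1) - (*_2)$; this means the coefficient of $x_j$ in $*_1$ equals its coefficient in $*_2$ (otherwise $x_j$ would survive in $*$ with coefficient $\pm 2$, impossible for a difference equality on $4$ distinct variables). Thus either $x_j$ has coefficient $+1$ in both $*_1$ and $*_2$ --- in which case $x_i$ and $x_j$ appear with the same sign in both, so $(*_1)$ and $(*_2)$ are sum-aligned at $(i,j)$ --- or $x_j$ has coefficient $-1$ in both, in which case $x_i$ and $x_j$ appear with opposite sign in both, so $(*_1)$ and $(*_2)$ are difference-aligned at $(i,j)$. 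Either way the two equations of the $2$-implication are difference-aligned or sum-aligned, as claimed. The argument is essentially a single variable/coefficient count; the only point requiring care --- more bookkeeping than genuine obstacle --- is the sign normalization and making sure the ``appears four times'' case of Lemma \ref{lem:2-good-min-impl}\ref{item:2-good-mi-vars} is correctly ruled out for $t = 2$, so I expect no real difficulty.
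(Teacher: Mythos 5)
Your proof is correct and takes essentially the same route as the paper: after normalizing signs so that $*_1 - *_2 = *$, a variable count shows $(*_1)$ and $(*_2)$ must share a variable $x_j$ other than $x_i$, and since $x_j$ must cancel out of $*$, its coefficients in $*_1$ and $*_2$ agree, giving difference- or sum-alignment. The only difference is that you invoke Lemma \ref{lem:2-good-min-impl}\ref{item:2-good-mi-vars}--\ref{item:2-good-signs} explicitly where the paper handles the variable count and sign normalization directly by hand; both are fine, and yours is if anything a bit more systematic.
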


\begin{proof}
    Let $\{(*_1), (*_2)\}$ be a $2$-implication at $i$, and let $(*)$ be the difference equality it produces. Let $*_1$, $*_2$, $*$ be the contents of $(*_1)$, $(*_2)$, $(*)$; we can assume without loss of generality that $x_i$ has a coefficient of $+1$ in $*_1$ and $*_2$, and that \[{*} = {*_1} - {*_2}.\] 
    
    Then $*_1$ and $*_2$ must share some variable other than $x_i$ (otherwise $(*)$ would involve $7$ variables), and this variable must have the same coefficient in both; this means $(*_1)$ and $(*_2)$ must be either difference-aligned (if this coefficient is $-1$) or sum-aligned (if this coefficient is $+1$). 
\end{proof}

We now turn to $4$-implications. To prove Lemma \ref{lem:impls-not-i}, we only need to consider $4$-implications in which at most one pair of equations is difference-aligned (since we assume that the collection $\cS$ in Lemma \ref{lem:impls-not-i} contains at most one pair of difference-aligned equations). 

\begin{lemma}\label{lem:4-impl}
    Let $\cT$ be a $4$-implication such that at most one pair of equations in $\cT$ is difference-aligned. Then exactly one pair of equations in $\cT$ is difference-aligned, and exactly one pair is sum-aligned. 
\end{lemma}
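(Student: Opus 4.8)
The plan is to read off the rigid structure that Lemma~\ref{lem:2-good-min-impl} forces on a $4$-implication, encode it as a small multigraph, and then show that the hypothesis ``at most one difference-aligned pair'' pins that multigraph down. Write $\cT = \{(*_1), (*_2), (*_3), (*_4)\}$ for the $4$-implication at $i$, and let $(*_\cT)$ be the difference equality it produces (which does not involve $x_i$). Since $x_i$ appears in all four of $(*_1), \ldots, (*_4)$, it appears in at least four of the five equations $(*_1), \ldots, (*_4), (*_\cT)$, so by Lemma~\ref{lem:2-good-min-impl}\ref{item:2-good-mi-vars} we must be in the case where $\cT$ involves exactly $9$ variables, with $x_i$ appearing exactly four times (hence not in $(*_\cT)$, as expected) and each of the other eight variables appearing exactly twice among the five equations. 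By Lemma~\ref{lem:2-good-min-impl}\ref{item:2-good-signs}, after normalizing the contents $*_1, \ldots, *_4$ so that $x_i$ has coefficient $+1$ in each, we can write $*_\cT = \eps_1{*_1} + \cdots + \eps_4{*_4}$ with $\eps_j \in \{\pm 1\}$; the coefficient of $x_i$ on the right is $\eps_1 + \cdots + \eps_4$, which must vanish, so two of the $\eps_j$ equal $+1$ and two equal $-1$, and we relabel so that $\eps_1 = \eps_2 = +1$ and $\eps_3 = \eps_4 = -1$. Finally note that each of $(*_1), \ldots, (*_4), (*_\cT)$ is a difference equality implied by $\cT$, hence (being valid and AP-free like $\cT$) involves exactly four distinct variables, and that any two of them form a sub-collection implied by $\cT$ and so share at most two variables by Lemma~\ref{lem:2-eqns-5-vars}.

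Next I would record the structure as a multigraph $H$ on vertex set $\{(*_1), (*_2), (*_3), (*_4), (*_\cT)\}$ whose edges are the eight non-$x_i$ variables, each joining the two equations it appears in. Then $(*_1), \ldots, (*_4)$ have degree $3$ and $(*_\cT)$ has degree $4$; Lemma~\ref{lem:2-eqns-5-vars} forces at most one edge between any two of $(*_1), \ldots, (*_4)$ (they already share $x_i$) and at most a double edge from each $(*_j)$ to $(*_\cT)$; and a degree count shows there are exactly four edges among $(*_1), \ldots, (*_4)$. Set $P = \{(*_1), (*_2)\}$ and $Q = \{(*_3), (*_4)\}$. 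The key computation: for a variable $x_v$ on an edge of $H$ inside $\{(*_1), \ldots, (*_4)\}$ joining $(*_j)$ and $(*_{j'})$, the coefficient of $x_v$ in $*_\cT$ is $\eps_j c_j(v) + \eps_{j'} c_{j'}(v)$ and must vanish (as $x_v \notin (*_\cT)$), where $c_j(v) \in \{\pm 1\}$ denotes the coefficient of $x_v$ in $*_j$. Hence an edge inside $P$ or inside $Q$ (where $\eps_j = \eps_{j'}$) gives $c_j(v) = -c_{j'}(v)$, so the pair $(*_j), (*_{j'})$ is neither difference- nor sum-aligned; a crossing edge (where $\eps_j = -\eps_{j'}$) gives $c_j(v) = c_{j'}(v)$, so the pair is difference-aligned if both coefficients are $-1$ and sum-aligned if both are $+1$. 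Thus the aligned pairs of $\cT$ correspond exactly to the crossing edges of $H$. Moreover, since each $*_j$ has its three non-$x_i$ coefficients summing to $-1$, exactly one of them equals $+1$; so each equation lies on at most one sum-aligned crossing edge, and the sum-aligned crossing edges form a matching between $P$ and $Q$, of size at most $2$.

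To finish, there are at least two crossing edges (four edges among $(*_1), \ldots, (*_4)$, at most one inside each of $P$ and $Q$). Four crossing edges would give four aligned pairs, at most one difference-aligned by hypothesis, hence at least three sum-aligned --- impossible for a matching. Three crossing edges would force exactly two sum-aligned (a perfect matching of $P \cup Q$) plus exactly one difference-aligned edge, which then shares a vertex with each sum-aligned edge; chasing coefficients through the unique within-$P$ or within-$Q$ edge and the equation at that shared vertex forces some $(*_j)$ to have two non-$x_i$ coefficients equal to $+1$, a contradiction. So there are exactly two crossing edges, i.e., exactly two aligned pairs. If the two crossing edges share a vertex, then at most one is sum-aligned, so with at most one difference-aligned we get exactly one of each; if they form a matching, then ``both difference-aligned'' is excluded by hypothesis, and ``both sum-aligned'' is excluded by the same coefficient chase (the forced within-$P$ and within-$Q$ edges again create an equation with two $+1$ coefficients). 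Either way, exactly one pair of equations in $\cT$ is difference-aligned and exactly one is sum-aligned.

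I expect the main obstacle to be the sign bookkeeping in the last step --- keeping straight which coefficient normalizations have been used up, and propagating the constraint ``each $*_j$ has exactly one $+1$ non-$x_i$ coefficient'' through the forced within-part edges to extract the contradictions in the three-crossing-edge and both-sum-aligned cases. Everything else is a finite check on the possible shapes of $H$.
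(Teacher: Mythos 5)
Your proof is correct, and it takes a genuinely different route from the paper. The paper's proof of Lemma~\ref{lem:4-impl} works by direct normalization: after fixing $*_\cT = *_1 + *_2 - *_3 - *_4$, it argues that $*_1$ and $*_2$ must share exactly one variable other than $x_i$ (if they shared none, two of the four negative-coefficient variables would be forced into $*_3$ or $*_4$ with coefficient $-1$, yielding two difference-aligned pairs); it then writes out explicit forms for $*_1$ and $*_2$, reads off that exactly two of $x_3, x_5, x_6$ lie in $*_\cT$ (giving exactly one difference-aligned pair), and finishes by pinning down the unique shared variable of $*_3, *_4$ and the location of $x_4$ to exhibit the unique sum-aligned pair. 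You instead abstract everything into a multigraph whose edges are the eight non-$x_i$ variables, use degree counting plus Lemma~\ref{lem:2-eqns-5-vars} to reduce to four edges among $(*_1), \ldots, (*_4)$ with at least two crossing $P$--$Q$ edges, and observe that alignment corresponds exactly to crossing edges with the type (difference vs.\ sum) read off from the shared coefficient's sign; the constraint that each $*_j$ has exactly one $+1$ non-$x_i$ coefficient then kills the four-crossing and three-crossing cases and the two-sum case, leaving exactly one of each. The graph encoding makes the counting cleaner and the case structure more transparent, at the cost of a handful of sign chases that you correctly flag as remaining bookkeeping (and which do go through: in each offending scenario the within-$P$ or within-$Q$ edge, combined with the relevant sum-aligned crossing edge, forces two $+1$ coefficients onto one equation). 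The paper's version is more hands-on but arrives at the same place with roughly the same amount of work; your version generalizes more gracefully if one wanted to run a similar analysis on $t$-implications for larger $t$.
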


\begin{proof}
    Let $\cT = \{(*_1), (*_2), (*_3), (*_4)\}$, and let $(*)$ be the difference equality produced by $\cT$. Suppose that $\cT$ is a $4$-implication at $1$ involving the $9$ variables $x_1$, \ldots, $x_9$ (by Lemma \ref{lem:2-good-min-impl}\ref{item:2-good-mi-vars}, any $4$-implication involves exactly $9$ variables). Let $*_1$, $*_2$, $*_3$, $*_4$, $*$ be the contents of $(*_1)$, $(*_2)$, $(*_3)$, $(*_4)$, $(*)$; we can assume without loss of generality that the coefficient of $x_1$ in each of $*_1$, $*_2$, $*_3$, $*_4$ is $+1$. Then by Lemma \ref{lem:2-good-min-impl}\ref{item:2-good-signs} we have ${*} = \eps_1{*_1} + \cdots + \eps_4{*_4}$ for some $\eps_1, \ldots, \eps_4 \in \{\pm 1\}$; exactly two of these coefficients must be $+1$, so we can assume without loss of generality that \[{*} = {*_1} + {*_2} - {*_3} - {*_4}.\] 

    First we claim that $*_1$ and $*_2$ must share exactly one variable other than $x_1$. To see this, by Lemma \ref{lem:2-eqns-5-vars} they cannot share more than one such variable; now assume for contradiction that they share none. Then we can assume without loss of generality that 
    \begin{alignat*}{10}
        {*_1} &= x_1 - x_2 - x_3 + x_4 && \\
        {*_2} &= x_1 && - x_5 - x_6 + x_7.
    \end{alignat*} 
    Then at least two of $x_2$, $x_3$, $x_5$, and $x_6$ must appear in $*_3$ or $*_4$ with coefficient $-1$, as any of these variables that does not do so will appear in $*$ with coefficient $-1$ (and $*$ can only have two variables with coefficient $-1$). But this means at least two pairs of $(*_1)$, $(*_2)$, $(*_3)$, $(*_4)$ are difference-aligned, which is a contradiction. 

    So $*_1$ and $*_2$ share exactly one variable other than $x_1$, and this variable must have coefficient $-1$ in one and $+1$ in the other; so we can assume without loss of generality that 
    \begin{alignat*}{10}
        {*_1} &= x_1 - x_2 - x_3 + x_4 && \\
        {*_2} &= x_1 + x_2 && - x_5 - x_6.
    \end{alignat*}

    As before, at most two of $x_3$, $x_5$, and $x_6$ can appear in $*$ (as each would have coefficient $-1$), while any of these variables that does \emph{not} appear in $*$ must appear in $*_3$ or $*_4$ with coefficient $-1$, producing a difference-aligned pair of equations. Since we have at most one difference-aligned pair, this means \emph{exactly} two of $x_3$, $x_5$, and $x_6$ must appear in $*$, and there is exactly one difference-aligned pair of equations (the pair sharing the one variable among $x_3$, $x_5$, and $x_6$ that does not appear in $*$). 

    Finally, $*_3$ and $*_4$ must also share exactly one variable other than $x_1$ (for the same reason that $*_1$ and $*_2$ do); let this variable be $x_7$, and assume that $x_7$ has coefficient $-1$ in $*_3$ and $+1$ in $*_4$. Then $x_8$ and $x_9$ must both appear in $*$ (if one did not, then it would also be shared by $*_3$ and $*_4$). Since only $4$ variables appear in $*$, this means $x_4$ does not; so it must appear with coefficient $+1$ in $*_3$ (it cannot appear with coefficient $+1$ in $*_4$, as $x_7$ does). So there is exactly one sum-aligned pair, namely $(*_1)$ and $(*_3)$. 
\end{proof}

\begin{example} \label{ex:4-impl}
    We illustrate Lemma \ref{lem:4-impl} for two examples of $4$-implications at $1$; as in the proof of Lemma \ref{lem:4-impl}, we let our $4$-implication be $\{(*_1), (*_2), (*_3), (*_4)\}$ and the difference equality it produces be $(*)$, and we let the contents of $(*_1)$, $(*_2)$, $(*_3)$, $(*_4)$, $(*)$ be $*_1$, $*_2$, $*_3$, $*_4$, $*$. 
    
    One example of a $4$-implication with only one difference-aligned pair of equations is given by 
    \begin{alignat*}{10}
        *_1 &= x_1 && - x_2 && - x_3 &&+ x_4 && && && && && \\
        *_2 &= x_1 && + x_2 && && && - x_5 && - x_6 && && && \\
        *_3 &= x_1 && && && + x_4 && - x_5 && && - x_7 && && \\
        *_4 &= x_1 && && && && && && + x_7 && - x_8 && - x_9 \\ 
        \cline{1-19} 
        * &= && && - x_3 && && && - x_6 && && + x_8 && + x_9
    \end{alignat*} 
    (where we have ${*} = {*_1} + {*_2} - {*_3} - {*_4}$). Here the one difference-aligned pair is $\{(*_2), (*_3)\}$, and the one sum-aligned pair is $\{(*_1), (*_3)\}$. 

    Another example of a $4$-implication with only one difference-aligned pair of equations is 
    \begin{alignat*}{10}
        *_1 &= x_1 && - x_2 && - x_3 &&+ x_4 && && && && && \\
        *_2 &= x_1 && + x_2 && && && - x_5 && - x_6 && && && \\
        *_3 &= x_1 && && && + x_4 && && && - x_7 && - x_8 && \\
        *_4 &= x_1 && && && && - x_5 && && + x_7 && && - x_9 \\ 
        \cline{1-19} 
        * &= && && - x_3 && && && - x_6 && && + x_8 && + x_9
    \end{alignat*} 
    (where we again have ${*} = {*_1} + {*_2} - {*_3} - {*_4}$). Here the one difference-aligned pair is $\{(*_2), (*_4)\}$, and the one sum-aligned pair is $\{(*_1), (*_3)\}$. 

    In fact, by slightly extending the arguments in Lemmas \ref{lem:4-impl} and \ref{lem:4-impls-7}, it can be shown that any $4$-implication with only one difference-aligned pair of equations must be of one of these two forms. 
\end{example}

Lemma \ref{lem:4-impl} describes how an individual $4$-implication can look; the next three lemmas analyze how $4$-implications can interact with each other. 

\begin{lemma}\label{lem:4-impls-overlap}
    Fix $i \in \{1, \ldots, k\}$, and let $\cS$ be a $2$-good collection of linearly independent difference equalities involving $x_i$ such that only one pair of equations in $\cS$ is difference-aligned. Suppose that $\cT_1$ and $\cT_2$ are distinct $4$-implications contained in $\cS$. Then $\cT_1$ and $\cT_2$ share exactly $3$ equations, which involve exactly $7$ variables. 
\end{lemma}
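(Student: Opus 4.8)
The plan is to determine exactly $\abs{\cT_1 \cap \cT_2}$: since $\cT_1 \neq \cT_2$ are both $4$-element subsets of $\cS$, we have $\abs{\cT_1 \cap \cT_2} \in \{0, 1, 2, 3\}$, and I want to show it equals $3$, after which the "exactly $7$ variables" claim will follow quickly. Throughout I will use freely that any subset of $\cS$ is again a $2$-good collection of linearly independent difference equalities (so Lemmas \ref{lem:2-good-min-impl}, \ref{lem:2-eqns-5-vars}, \ref{lem:4-impl} and Remark \ref{rmk:c-light-alt} all apply to subsets of $\cS$). The crucial opening move uses the hypothesis that $\cS$ has only one difference-aligned pair of equations: since $\cS$ has at most one such pair, so does each of $\cT_1$ and $\cT_2$, so Lemma \ref{lem:4-impl} applies to each, giving that each of $\cT_1$ and $\cT_2$ contains \emph{exactly one} difference-aligned pair of equations. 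As the unique difference-aligned pair of $\cS$ is the only one available, it must be contained in both $\cT_1$ and $\cT_2$; hence $\abs{\cT_1 \cap \cT_2} \geq 2$. (This is the step that genuinely needs the hypothesis: plain variable-counting, as below, rules out $\abs{\cT_1 \cap \cT_2} = 1$ but not $\abs{\cT_1 \cap \cT_2} = 0$, since two $4$-implications sharing only $x_i$ would involve exactly $17$ variables, which is precisely the $2$-light boundary for $8$ linearly independent equations.)

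Next I would rule out $\abs{\cT_1 \cap \cT_2} = 2$ by a variable count resting on three facts: (a) by Lemma \ref{lem:2-good-min-impl}\ref{item:2-good-mi-vars}, any $4$-implication involves exactly $9$ variables --- the first alternative there cannot occur, since $x_i$ appears in all four equations of the $4$-implication but not in the difference equality it produces, so $x_i$ appears four times among those five equations; (b) since $\cS$ is $2$-light, any $t$ linearly independent equations of $\cS$ involve at least $2t + 1$ variables (Remark \ref{rmk:c-light-alt}); and (c) any two distinct difference equalities of $\cS$ share at most $2$ variables (Lemma \ref{lem:2-eqns-5-vars}) and each involves exactly $4$ variables (as $\cS$ is valid and AP-free). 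If $\abs{\cT_1 \cap \cT_2} = 2$, then by (c) the two shared equations involve at least $4 + 4 - 2 = 6$ variables, all of which lie in the set of variables common to $\cT_1$ and $\cT_2$; so by (a), $\cT_1 \cup \cT_2$ involves at most $9 + 9 - 6 = 12$ variables. But $\cT_1 \cup \cT_2 \subseteq \cS$ consists of $4 + 4 - 2 = 6$ linearly independent difference equalities, so by (b) it involves at least $13$ variables --- a contradiction. Hence $\abs{\cT_1 \cap \cT_2} = 3$.

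It remains to show that the three shared equations involve exactly $7$ variables. The lower bound is immediate from (b): they form a collection of $3$ linearly independent difference equalities of $\cS$, hence involve at least $7$ variables. For the upper bound, $\cT_1 \cup \cT_2 \subseteq \cS$ consists of $4 + 4 - 3 = 5$ linearly independent difference equalities, so by (b) it involves at least $11$ variables; writing $v$ for the number of variables in the three shared equations, those $v$ variables all lie in the intersection of the variable sets of $\cT_1$ and $\cT_2$, so by (a) the number of variables in $\cT_1 \cup \cT_2$ is $9 + 9 - \abs{\mathrm{vars}(\cT_1) \cap \mathrm{vars}(\cT_2)} \leq 18 - v$. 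Combining, $11 \leq 18 - v$, so $v \leq 7$, and therefore $v = 7$, as desired.

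The main obstacle, such as it is, lies only in the opening move: recognizing that Lemma \ref{lem:4-impl} together with the single-difference-aligned-pair hypothesis forces $\cT_1$ and $\cT_2$ to overlap in at least two equations, which is the one case that variable-counting alone cannot eliminate. After that, the argument is just careful bookkeeping with the three ingredients (a), (b), (c), and I do not anticipate any hidden difficulty.
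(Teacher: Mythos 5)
Your proof is correct and follows essentially the same route as the paper: both deduce from Lemma \ref{lem:4-impl} and the single-difference-aligned-pair hypothesis that $\cT_1$ and $\cT_2$ share the unique difference-aligned pair (hence overlap in 2 or 3 equations), then rule out the case of 2 shared equations by variable-counting against the $2$-light condition, and finally pin $v = 7$ by the same counting ingredients. Your explicit remark about why the single-difference-aligned-pair hypothesis is genuinely needed (a pure variable count cannot rule out disjoint $4$-implications on $17$ variables) is a nice observation that the paper's write-up leaves implicit.
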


\begin{proof}
    First, by Lemma \ref{lem:4-impl}, $\cT_1$ and $\cT_2$ must both contain the one difference-aligned pair of equations in $\cS$; this means they have either $2$ or $3$ equations in common. 

    First assume for contradiction that they have $2$ equations in common. Then by Lemma \ref{lem:2-eqns-5-vars}, these $2$ equations together involve at least $6$ variables; since $\cT_1$ and $\cT_2$ each involves $9$ variables, this means $\cT_1 \cup \cT_2$ involves at most $9 + 9 - 6 = 12$ variables. But then $\cT_1 \cup \cT_2$ consists of $6$ linearly independent equations on at most $12$ variables, contradicting the assumption that $\cS$ is $2$-light (by Remark \ref{rmk:c-light-alt}). 

    So $\cT_1$ and $\cT_2$ have $3$ equations in common, and these $3$ equations must involve at least $7$ variables (again by the assumption that $\cS$ is $2$-light and Remark \ref{rmk:c-light-alt}). If they involved at least $8$ variables, then $\cT_1 \cup \cT_2$ would consist of $5$ linearly independent equations on at most $9 + 9 - 8 = 10$ variables, again contradicting the assumption that $\cS$ is $2$-light; so they must involve \emph{exactly} $7$. 
\end{proof}

\begin{lemma}\label{lem:4-impls-7}
    Fix $i \in \{1, \ldots, k\}$, and let $\cT$ be a $4$-implication at $i$ such that only one pair of equations in $\cT$ is difference-aligned. Suppose that there is some $3$-equation subset $\cT' \subseteq \cT$ that involves exactly $7$ variables. Then there is only one such subset, and this subset $\cT'$ forms a $3$-implication at $i$ producing some difference equality (involving $x_i$) that is sum-aligned with the fourth equation. 
\end{lemma}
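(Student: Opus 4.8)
The plan is to exploit the rigidity that Lemma~\ref{lem:2-good-min-impl} forces on a $4$-implication and then do some careful sign bookkeeping.

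\emph{Setup and variable count.} Write $\cT = \{(*_1),(*_2),(*_3),(*_4)\}$ and let $(*)$ be the difference equality it produces; normalize contents so that $x_i$ has coefficient $+1$ in each $*_j$. Since $x_i$ occurs in all four $(*_j)$ but not in $(*)$ --- hence in exactly $4$ of the $5$ equations $(*_1),\dots,(*_4),(*)$ --- Lemma~\ref{lem:2-good-min-impl}\ref{item:2-good-mi-vars} places us in its second case: $\cT$ involves exactly $9$ variables, $x_i$ appears $4$ times among these equations, and every other variable appears exactly twice. Also $*=\eps_1*_1+\dots+\eps_4*_4$ with each $\eps_j\in\{\pm1\}$ by \ref{item:2-good-signs}, and comparing coefficients of $x_i$ shows two of the $\eps_j$ are $+1$ and two are $-1$. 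I would then split the $9$ variables into $x_i$; the four \emph{$y$-variables}, those occurring in $(*)$ (each also occurs in exactly one $(*_j)$, its \emph{home}); and the four \emph{$z$-variables}, the rest (each occurs in exactly two of the $(*_j)$). With $a_j$ the number of $y$-variables homed at $(*_j)$, one gets $\sum_j a_j = 4$, $0\le a_j\le 3$, and $(*_j)$ contains $x_i$, $a_j$ $y$-variables and $3-a_j$ $z$-variables. The key count is that $\cT\setminus\{(*_j)\}$ involves exactly $9-a_j$ variables (namely $x_i$, all four $z$-variables, and the $4-a_j$ $y$-variables not homed at $(*_j)$), so a $3$-element subset of $\cT$ involves exactly $7$ variables iff it omits some $(*_j)$ with $a_j=2$.

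\emph{Uniqueness.} It then suffices to show at most one $a_j$ equals $2$. If $a_p=a_q=2$ with $p\ne q$, the other two indices have $a_m=a_{m'}=0$, so $(*_m),(*_{m'})$ each contain three $z$-variables; writing $b_{jj'}$ for the number of $z$-variables shared by $(*_j),(*_{j'})$, one has $\sum_{j'\ne j}b_{jj'}=3-a_j$, and adding the identities for $m,m'$ and using $b_{pm}+b_{pm'}\le1$, $b_{qm}+b_{qm'}\le1$ gives $b_{mm'}\ge2$. Then $(*_m)$ and $(*_{m'})$ share at least three variables ($x_i$ and two $z$'s), contradicting Lemma~\ref{lem:2-eqns-5-vars}. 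So under the hypothesis exactly one $a_j$ equals $2$, and the $7$-variable $3$-subset is unique.

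\emph{The $3$-implication and the alignment.} Relabel so the unique $7$-variable subset is $\cT'=\{(*_1),(*_2),(*_3)\}$ (thus $a_4=2$), and, replacing the content $*$ by $-*$ if necessary and reordering $\{1,2,3\}$, arrange $*=*_1+*_2-*_3-*_4$; then $*_1+*_2-*_3=*+*_4$. Reading off the coefficients of $*+*_4$ from the classification --- the two $y$-variables homed at $(*_4)$ cancel (their coefficient in $*$ is minus their coefficient in $*_4$), leaving $x_i$, the unique $z$-variable $z_0$ of $(*_4)$, and the two $y$-variables of $(*)$ homed in $\{(*_1),(*_2),(*_3)\}$, all with coefficients $\pm1$ summing to $0$ --- shows $*+*_4$ is a genuine difference equality involving $x_i$. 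Since $(*_1),(*_2),(*_3)$ are linearly independent with all combining coefficients ($1,1,-1$) nonzero, $\cT'$ minimally implies it, so $\cT'$ is a $3$-implication at $i$ producing $(*_{\cT'}):=(*+*_4)$. Now $(*_{\cT'})$ and $(*_4)$ share only $x_i$ and $z_0$, so they are aligned at $(i,z_0)$ --- sum-aligned if $z_0$ has coefficient $+1$ in $*_4$, difference-aligned if $-1$ --- and it remains to fix this sign. I would use that a $z$-sharing pair $(*_p),(*_q)$ is aligned (difference- or sum-) iff $\eps_p=-\eps_q$ (immediate from $\eps_pc+\eps_qc'=0$ for the shared $z$ together with Lemma~\ref{lem:2-eqns-5-vars}); combined with Lemma~\ref{lem:4-impl} (exactly one difference- and one sum-aligned pair in $\cT$), this pins down which pairs share $z$-variables. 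Applying Lemma~\ref{lem:3-impls} to $\cT'$ --- its graph $\cG_3(\cT')$ is a $4$-vertex path, and $\cT'$ can contain at most one difference-aligned pair, hence contains exactly one --- shows that the fourth equation $(*_4)$ lies in neither aligned pair of $\cT$. Then $(*_4)$'s single $z$-variable $z_0$ must be the one shared with $(*_3)$ via the equal-$\eps$ pair $\{(*_3),(*_4)\}$, forcing each of $(*_1),(*_2),(*_4)$ to share a $z$-variable with $(*_3)$; so $(*_3)$ carries all three $z$-variables, and balancing its coefficients (one shared $z$ with coefficient $+1$ and one with $-1$, from the sum- and difference-aligned pairs) gives $z_0$ coefficient $-1$ in $*_3$, hence $+1$ in $*_4$. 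Therefore $(*_{\cT'})$ is sum-aligned with $(*_4)$.

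\emph{Main obstacle.} The counting parts are routine; the genuinely delicate step is the final sign analysis ruling out that $(*_{\cT'})$ is \emph{difference}-aligned with the fourth equation. I expect that to be the main obstacle, and it is exactly the place where the hypothesis that $\cT$ has only one difference-aligned pair is used.
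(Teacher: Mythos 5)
Your proof is correct, but it is a genuinely different route from the paper's, and it is worth comparing.

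For uniqueness, you set up the $y$-/$z$-variable bookkeeping (the $a_j$'s and the sharing matrix $b_{jj'}$) and derive $b_{mm'}\ge 2$, contradicting Lemma~\ref{lem:2-eqns-5-vars}. The paper instead observes that any $7$-variable $3$-subset of $\cT$ is a $3$-implication, so two such subsets would be non-disjoint $3$-implications inside the $2$-good collection $\cS$, contradicting Lemma~\ref{lem:3-impls-disjoint}. Both work; yours is more elementary and self-contained, the paper's reuses prior machinery and is shorter. (Your derivation that the $7$-variable $3$-subsets are exactly the $\cT\setminus\{(*_j)\}$ with $a_j = 2$ is a clean and correct observation.)

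For the sum-alignment, the paper's argument is considerably lighter: it observes that $\{(*_{\cT'}),(*_4)\}$ is a $2$-implication, hence aligned by Lemma~\ref{lem:2-impls}; that $(*_{\cT'})$ must be a degree-$2$ vertex of the $4$-vertex path $\cG_3(\cT')$ (since $\cT'$ can carry at most one edge); and that therefore both ``opposite-sign'' partners of $x_i$ in $(*_{\cT'})$ already appear with opposite sign in some equation of $\cT'$, so difference-alignment of $(*_4)$ with $(*_{\cT'})$ would create a second difference-aligned pair in $\cT$. You instead do an explicit sign computation: normalize $*=*_1+*_2-*_3-*_4$, observe that $z$-sharing pairs are aligned iff $\eps_p=-\eps_q$, conclude both same-$\eps$ pairs share $z$'s, deduce from $a_4=2$ that $(*_4)$'s sole $z$-variable is pinned to the pair $\{3,4\}$ and hence $(*_4)$ lies in no aligned pair, then balance the coefficients of $*_3$ to get $z_0$ coefficient $-1$ there and $+1$ in $*_4$. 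This checks out, and it has the virtue of being completely explicit, but it is more work. One small expositional wrinkle: you phrase ``Applying Lemma~\ref{lem:3-impls}\dots shows that $(*_4)$ lies in neither aligned pair,'' but that lemma (with the one-difference-aligned-pair hypothesis) only rules out $(*_4)$ from the \emph{difference}-aligned pair; ruling it out of the sum-aligned pair is what the subsequent $z$-variable argument accomplishes. In fact the Lemma~\ref{lem:3-impls} invocation is dispensable in your route, since the $z$-sharing analysis already forces the difference-aligned pair to lie inside $\cT'$ (the only opposite-$\eps$ $z$-sharing pairs avoiding $(*_4)$ are $\{1,3\}$ and $\{2,3\}$). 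Streamlining there would make your argument cleaner, but it is not a gap.
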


For example, in the first $4$-implication in Example \ref{ex:4-impl}, $\{(*_1), (*_2), (*_3)\}$ is such a subset; the second $4$-implication in Example \ref{ex:4-impl} has no such subset. 

\begin{proof}
    Let $\cT = \{(*_1), (*_2), (*_3), (*_4)\}$, let $(*_\cT)$ be the difference equality it produces, and let $*_1$, $*_2$, $*_3$, $*_4$, $*_\cT$ be the contents of $(*_1)$, $(*_2)$, $(*_3)$, $(*_4)$, $(*_\cT)$. By Lemma \ref{lem:2-good-min-impl}\ref{item:2-good-signs}, we can write \[{*_\cT} = \eps_1{*_1} + \cdots + \eps_4{*_4}\] for some $\eps_1, \ldots, \eps_4 \in \{\pm 1\}$. 

    Assume that $\cT' = \{(*_1), (*_2), (*_3)\}$ involves $7$ variables. Let \[{*_{\cT'}} = \eps_1{*_1} + \eps_2{*_2} + \eps_3{*_3} = {*_\cT} - \eps_4{*_4},\] and let $(*_{\cT'})$ be the equation ${*_{\cT'}} = 0$. 
    
    We claim that $(*_{\cT'})$ is a difference equality involving $x_i$. To see this, first note that $*_1$, $*_2$, and $*_3$ together contain $3 \cdot 3 = 9$ `slots' for variables other than $x_i$, so since they contain $6$ variables other than $x_i$ and each appears once or twice, $3$ of these variables must appear once and $3$ must appear twice. The $2$ variables that do not appear in $*_1$, $*_2$, $*_3$ clearly cannot appear in $*_{\cT'}$; meanwhile, the $3$ variables that appear twice in $*_1$, $*_2$, $*_3$ cannot appear in $*_\cT$ or $*_4$, and therefore cannot appear in $*_{\cT'}$ either. On the other hand, the $3$ variables that appear once in $*_1$, $*_2$, $*_3$, as well as $x_i$, must each appear in exactly one of $*_\cT$ and $*_4$; so these four variables all appear in $*_{\cT'}$ with coefficient $\pm 1$. Finally, the coefficients of $*_{\cT'}$ must sum to $0$; this means $(*_{\cT'})$ is a difference equality. 

    So $\{(*_1), (*_2), (*_3)\}$ is a $3$-implication at $i$ producing $(*_{\cT'})$. By Lemma \ref{lem:3-impls-disjoint}, this means no other subset can contain $7$ variables (as then it would also form a $3$-implication, so we would have two $3$-implications that are not disjoint). 

    Finally, we need to show that $(*_{\cT'})$ is sum-aligned with $(*_4)$. First note that ${*_\cT} = {*_{\cT'}} + \eps_4{*_4}$, so $\{(*_{\cT'}), (*_4)\}$ is a $2$-implication; by Lemma \ref{lem:2-impls}, this means $(*_{\cT'})$ and $(*_4)$ are either difference-aligned or sum-aligned. But by Lemma \ref{lem:3-impls} applied to $\cT'$, the graph $\cG_3(\cT')$ is a $4$-vertex path on $(*_1)$, $(*_2)$, $(*_3)$, $(*_\cT')$. Since at most one pair of $(*_1)$, $(*_2)$, $(*_3)$ is difference-aligned by assumption (and therefore there is at most one edge between them), this means \emph{exactly} one pair is, and $(*_{\cT'})$ must be one of the vertices of degree $2$ in this path --- so both variables that appear with opposite sign as $x_i$ in $(*_{\cT'})$ must also appear with opposite sign as $x_i$ in one of $(*_1)$, $(*_2)$, $(*_3)$. Then if $(*_4)$ were difference-aligned with $(*_{\cT'})$, it would also be difference-aligned with one of $(*_1)$, $(*_2)$, $(*_3)$, producing a second difference-aligned pair among $(*_1)$, $(*_2)$, $(*_3)$, $(*_4)$. This is impossible, so $(*_{\cT'})$ and $(*_4)$ must instead be sum-aligned.  
\end{proof}

\begin{lemma}\label{lem:many-4-impls}
    Fix $i \in \{1, \ldots, k\}$, and let $\cS$ be a $2$-good collection of linearly independent difference equalities involving $x_i$ such that at most one pair is difference-aligned. Suppose that $\cT_1$, \ldots, $\cT_r$ are distinct $4$-implications contained in $\cS$. Then there exist three equations present in all of them, and their fourth equations are all sum-aligned. 
\end{lemma}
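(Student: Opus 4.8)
The plan is to combine Lemmas~\ref{lem:4-impls-overlap} and~\ref{lem:4-impls-7} to force all of $\cT_1, \ldots, \cT_r$ to contain a common triple of equations, and then to read off the sum-alignment of the remaining equations from the structure of that triple. If $r \le 1$ the statement is trivial (any three equations of $\cT_1$ will do, and there is at most one ``fourth equation'', so there is nothing to align), so I would assume $r \ge 2$. Since $\cT_1$ is a $4$-implication contained in the $2$-good collection $\cS$, which has at most one difference-aligned pair, Lemma~\ref{lem:4-impl} shows that $\cT_1$ has \emph{exactly} one difference-aligned pair; hence $\cS$ has exactly one difference-aligned pair, so Lemmas~\ref{lem:4-impls-overlap} and~\ref{lem:4-impls-7} are both available.

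Next I would pin down the common triple. For each $j \ge 2$, Lemma~\ref{lem:4-impls-overlap} says that $\cT_1 \cap \cT_j$ consists of exactly $3$ equations involving exactly $7$ variables. But Lemma~\ref{lem:4-impls-7}, applied to the $4$-implication $\cT_1$, says that $\cT_1$ has \emph{at most one} $3$-equation subset involving exactly $7$ variables; therefore $\cT_1 \cap \cT_j$ is the same set for every $j \ge 2$, which I will call $\cT'$. Since $\cT' = \cT_1 \cap \cT_j \subseteq \cT_j$ for all $j \ge 2$, and also $\cT' \subseteq \cT_1$, the triple $\cT'$ is contained in every $\cT_j$. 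Because $\abs{\cT_j} = 4$, each $\cT_j$ is $\cT'$ together with exactly one further equation $(e_j)$, and the $(e_j)$ are pairwise distinct (otherwise two of the $\cT_j$ would coincide).

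Finally I would analyze the equations $(e_j)$ using Lemma~\ref{lem:4-impls-7} a second time. Since $\cT'$ is a $3$-equation subset of each $\cT_j$ involving exactly $7$ variables, that lemma says $\cT'$ forms a $3$-implication at $i$ producing a difference equality $(*_{\cT'})$ — which involves $x_i$, and is unique by Lemma~\ref{lem:min-impl-unique} — and that $(*_{\cT'})$ is sum-aligned with the fourth equation of each $\cT_j$, i.e.\ with each $(e_j)$. Since $\cS$ is $2$-good, $(*_{\cT'})$ involves exactly $4$ distinct variables (by the remark preceding Lemma~\ref{lem:2-eqns-5-vars}); normalizing the coefficient of $x_i$ to $+1$ and writing $(*_{\cT'})$ as $x_i + x_a = x_b + x_c$, the variable $x_a$ is the \emph{only} variable appearing with the same sign as $x_i$ in $(*_{\cT'})$. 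Every $(e_j)$ shares $x_i$ with $(*_{\cT'})$, so by the remark following Lemma~\ref{lem:2-eqns-5-vars} it is sum-aligned with $(*_{\cT'})$ at $(i, c_j)$ for some index $c_j$; but $c_j$ must be an index at which $x_i$ and $x_{c_j}$ appear with the same sign in $(*_{\cT'})$, which forces $c_j = a$. Hence $x_i$ and $x_a$ occur with the same sign in every $(e_j)$, so any two of the equations $(e_j)$, $(e_{j'})$ are sum-aligned at $(i,a)$ — that is, the fourth equations are all (pairwise) sum-aligned, which is the claim.

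I expect the main obstacle to be the rigidity step in the second paragraph — showing that all the pairwise intersections $\cT_1 \cap \cT_j$ coincide — since this is precisely where the uniqueness clause of Lemma~\ref{lem:4-impls-7} is doing the real work; once the common triple $\cT'$ is identified, locating $x_a$ and checking sum-alignment is just bookkeeping with the definitions. Two points to be careful about are the degenerate range $r \le 1$, and making sure the phrase ``their fourth equations are all sum-aligned'' is read as pairwise sum-alignment, which is exactly what the argument delivers (indeed it delivers the stronger statement that they are all sum-aligned at the single pair $(i,a)$).
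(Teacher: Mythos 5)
Your proposal is correct and follows essentially the same route as the paper's proof: invoke Lemma \ref{lem:4-impls-overlap} to conclude that each pairwise intersection $\cT_1 \cap \cT_j$ is a $3$-equation subset of $\cT_1$ on exactly $7$ variables, use the uniqueness clause of Lemma \ref{lem:4-impls-7} to force all these intersections to coincide in a single $\cT'$, and then use the other conclusion of Lemma \ref{lem:4-impls-7} (that $\cT'$ is a $3$-implication whose produced equality is sum-aligned with each fourth equation) together with the Lemma \ref{lem:2-eqns-5-vars} observation to get pairwise sum-alignment. The only places you are more explicit than the paper are the preliminary check, via Lemma \ref{lem:4-impl}, that $\cS$ in fact has \emph{exactly} one difference-aligned pair (so that Lemmas \ref{lem:4-impls-overlap} and \ref{lem:4-impls-7} apply), and the final step identifying the single index $a$ at which all the fourth equations are sum-aligned; both are details the paper leaves implicit, and your fleshing them out is sound.
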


\begin{proof}
    If $r = 1$, the statement is vacuously true; now assume $r \geq 2$. Then by Lemma \ref{lem:4-impls-overlap}, each of $\cT_2$, \ldots, $\cT_r$ shares exactly $3$ equations with $\cT_1$, and these $3$ equations involve exactly $7$ variables. Furthermore, by Lemma \ref{lem:4-impls-7}, $\cT_1$ only has one $3$-equation subset $\cT'$ involving exactly $7$ variables, so it must share the same subset $\cT'$ with each of $\cT_2$, \ldots, $\cT_r$. 

    Then each of $\cT_1$, \ldots, $\cT_r$ contains the $3$-equation subset $\cT'$. Furthermore, by Lemma \ref{lem:4-impls-7}, $\cT'$ must form a $3$-implication at $i$ producing a difference equality $(*_\cT')$ involving $x_i$, and the fourth equations of $\cT_1$, \ldots, $\cT_r$ must all be sum-aligned with $(*_{\cT'})$. This means these fourth equations are also sum-aligned with each other (recall that since these equations and $(*_{\cT'})$ all share $x_i$, by Lemma \ref{lem:2-eqns-5-vars} if two are sum-aligned then they must be sum-aligned at $(i, j)$ for some $j$). 
\end{proof}

We are now ready to prove Lemma \ref{lem:impls-not-i}. 

\begin{proof}[Proof of Lemma \ref{lem:impls-not-i}]
    Write down all the equations in $\cS$, and split them into \emph{stacks} based on which variable occurs with the same sign as $x_i$ (in other words, two equations are sum-aligned if and only if they are in the same stack). Suppose that we have $r$ stacks with sizes $s_1$, \ldots, $s_r$, so that $s_1 + \cdots + s_r = s$. 

    Every difference equality implied by $\cS$ that does not involve $x_i$ must be produced by a $2$-implication or a $4$-implication at $i$. Furthermore, each $2$-implication and each $4$-implication can only produce one difference equality. So in order to bound the number of difference equalities implied by $\cS$ that do not involve $x_i$, it suffices to bound the total number of $2$-implications and $4$-implications at $i$ contained in $\cS$. 

    First, we prove \ref{item:all-sum}: the given condition means that there is only one stack, and therefore there are exactly $\binom{s}{2}$ $2$-implications formed by sum-aligned pairs of equations. Meanwhile, since two equations cannot be both sum-aligned and difference-aligned, there are no difference-aligned pairs of equations in $\cS$; by Lemma \ref{lem:4-impl}, this also means there are no $4$-implications in $\cS$. So the total number of $2$-implications and $4$-implications is precisely $\binom{s}{2}$, as desired.  

    Next, we prove \ref{item:no-diff}: now we are given that there are at least two stacks, and that no two equations in $\cS$ are difference-aligned. Then for the same reason as in the previous case, the only $2$-implications and $4$-implications contained in $\cS$ are the $2$-implications formed by sum-aligned pairs, of which there are \[\binom{s_1}{2} + \cdots + \binom{s_r}{2}.\] Since the function $x \mapsto \binom{x}{2}$ is convex and there are at least two stacks (of sizes at least $1$), this is at most \[\binom{s - 1}{2} + \binom{1}{2} = \binom{s - 1}{2}.\] So the total number of $2$-implications and $4$-implications is at most $\binom{s - 1}{2}$, as desired.  

    Finally, we prove \ref{item:one-diff}: we are given that exactly one pair of equations in $\cS$ is difference-aligned; since two equations cannot be both difference-aligned and sum-aligned, this means there are at least two stacks. 
    
    First consider the case where $\cS$ does not contain any $4$-implications. Then $\cS$ contains $\binom{s_1}{2} + \cdots + \binom{s_r}{2}$ $2$-implications in which the two equations are sum-aligned, and one where the two equations are difference-aligned. So the number of $2$-implications contained in $\cS$ is \[\binom{s_1}{2} + \cdots + \binom{s_r}{2} + 1 \leq \binom{s - 1}{2} + 1.\]

    Now consider the case where $\cS$ \emph{does} contain a $4$-implication. By Lemma \ref{lem:4-impl}, any $4$-implication must span exactly $3$ stacks (as it contains exactly one sum-aligned pair), so there are at least $3$ nonempty stacks. Furthermore, by Lemma \ref{lem:many-4-impls}, there exist three equations present in all $4$-implications, and there is some fixed stack --- without loss of generality, the stack of size $s_1$ --- that the fouth equation of each $4$-implication belongs to. Then the number of $4$-implications is at most $s_1$. So the total number of $2$-implications and $4$-implications is at most \[\binom{s_1}{2} + \cdots + \binom{s_r}{2} + 1 + s_1 = \binom{s_1 + 1}{2} + \binom{s_2}{2} + \cdots + \binom{s_r}{2} + 1.\] Since $(s_1 + 1) + s_2 + \cdots + s_r = s + 1$ and there are at least three stacks (of sizes at least $1$), this is at most \[\binom{s - 1}{2} + \binom{1}{2} + \binom{1}{2} + 1 = \binom{s - 1}{2} + 1.\] 
    
    So in either case, the total number of $2$-implications and $4$-implications is at most $\binom{s - 1}{2} + 1$. 
\end{proof} 

Thus we have shown the technical lemmas we used to prove Lemma \ref{lem:odd-bounding-diffs}, which completes the proof of Proposition \ref{prop:odd-upper-bound}.

\section{Proof of Theorem \ref{thm:lower-bound}: A family of lower bounds on \texorpdfstring{$g(n, k, \ell)$}{g(n, k, l)}} \label{sec:lower-bound}

In this section, we prove Theorem \ref{thm:lower-bound}, our lower bounds on $g(n, k, \ell)$ for certain values of $\ell$. The idea of the proof is as follows: we show that if $\abs{A - A}$ is too small, then we can find a certain $k$-element structure $A'$ in $A$ that satisfies $\abs{A' - A'} \leq \ell - 1$ for the value of $\ell$ in Theorem \ref{thm:lower-bound}, which contradicts the $(k, \ell)$-local property. Finding this $k$-element structure will involve finding a sum or difference that is repeated many times. For this, we will need the following lemma. Clearly, a set with small difference set must have a difference that is repeated many times; the following lemma states that such a set must also have a \emph{sum} that is repeated many times. 

\begin{lemma}\label{lem:repeated-sums}
    If $A \subseteq \RR$ has $n$ elements, then \[\max_{c \in \RR} \left(\#\{(a, b) \in A^2 \mid a + b = c\}\right) \cdot \#\{a - b \mid (a, b) \in A^2\} \geq n^2.\] 
\end{lemma}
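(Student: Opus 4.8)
The plan is to deduce this from a Cauchy--Schwarz estimate combined with the standard identity relating the ``difference energy'' and ``sum energy'' of $A$. Write $D = \{a - b \mid (a,b) \in A^2\}$ for the (signed) difference set, and for $d, s \in \RR$ set $t(d) = \#\{(a,b) \in A^2 \mid a - b = d\}$ and $r(s) = \#\{(a,b) \in A^2 \mid a + b = s\}$. Let $m = \max_{c \in \RR} r(c)$ be the first factor in the inequality; the goal is then to show $m \cdot \abs{D} \geq n^2$.

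The first step is a Cauchy--Schwarz bound. Since $\sum_d t(d) = \abs{A^2} = n^2$ and $t(d) = 0$ for $d \notin D$, Cauchy--Schwarz gives
\[n^4 = \Bigl(\sum_{d \in D} t(d)\Bigr)^{\!2} \leq \abs{D} \cdot \sum_{d \in D} t(d)^2.\]
The second step is the energy identity $\sum_d t(d)^2 = \sum_s r(s)^2$. Both sides count quadruples $(a_1, b_1, a_2, b_2) \in A^4$: the left side counts those with $a_1 - b_1 = a_2 - b_2$, and since this condition is equivalent to $a_1 + b_2 = a_2 + b_1$, grouping such quadruples by the common value $a_1 + b_2 = a_2 + b_1$ and counting representations of that value as a sum of two elements of $A$ shows the count equals $\sum_s r(s)^2$. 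The third step is to bound the sum energy using $m$: since each $r(s) \leq m$, we have $\sum_s r(s)^2 \leq m \sum_s r(s) = m n^2$. Combining the three steps yields $n^4 \leq \abs{D} \cdot m n^2$, hence $m \abs{D} \geq n^2$, which is the claim (after noting that $\abs{D} = \#\{a - b \mid (a,b) \in A^2\}$ is exactly the second factor).

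There is no real obstacle in this argument; it is a short energy computation, and the only point needing a moment's care is the identity $\sum_d t(d)^2 = \sum_s r(s)^2$, which is a routine double count of quadruples in $A^4$. (In fact the argument works verbatim in any abelian group, since it never divides by $2$.)
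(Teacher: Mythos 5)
Your proof is correct and is essentially the same energy argument as the paper's: both lower-bound the additive energy $\sum_d t(d)^2$ by $n^4/\abs{D}$ via Cauchy--Schwarz, and upper-bound it by $mn^2$ using the observation that $a_1 - b_1 = a_2 - b_2$ is equivalent to $a_1 + b_2 = a_2 + b_1$. The only cosmetic difference is that you route the upper bound through the explicit identity $\sum_d t(d)^2 = \sum_s r(s)^2$, whereas the paper directly counts quadruples by first fixing $(a_1, a_4)$ and then choosing $(a_2, a_3)$ in at most $m$ ways.
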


Note that $\{a - b \mid (a, b) \in A^2\}$ includes \emph{all} differences between two elements of $A$, not just the positive differences; in particular, we have $\#\{a - b \mid (a, b) \in A^2\} = 2\abs{A - A} + 1$.

\begin{proof}
    Let $m = \max_{c \in \RR} \left( \#\{(a, b) \in A^2 \mid a + b = c\}\right)$ and $d = \#\{a - b \mid (a, b) \in A^2\}$. Let \[S = \{(a_1, a_2, a_3, a_4) \in A^4 \mid a_1 - a_2 = a_3 - a_4\}.\] We will count $\abs{S}$ (the additive energy of $A$) in two ways, one in terms of $m$ and the other in terms of $d$. 

    First note that any $(a_1, a_2, a_3, a_4) \in S$ satisfies $a_1 + a_4 = a_2 + a_3$. So if we try to choose an element of $S$ by first choosing $a_1$ and $a_4$ (which can be done in $n^2$ ways), then there are at most $m$ ways to choose $a_2$ and $a_3$. This means $\abs{S} \leq n^2m$. 

    Now, suppose that the $d$ differences $a - b$ over all $n^2$ pairs $(a, b) \in A^2$ occur with multiplicities $x_1$, \ldots, $x_d$ respectively, so that $x_1 + \cdots + x_d = n^2$. Then by first fixing some difference and then choosing $(a_1, a_2)$ and $(a_3, a_4)$ to be two pairs achieving that difference, we have $\abs{S} = x_1^2 + \cdots + x_d^2$. By the Cauchy--Schwarz inequality, this means $\abs{S} \geq \frac{n^4}{d}$. 

    Combining these two bounds gives $\frac{n^4}{d} \leq \abs{S} \leq n^2m$, so $md \geq n^2$. 
\end{proof}

The next lemma states that we can find a certain structure in any set $A$ with sufficiently small $\abs{A - A}$. Intuitively, this structure consists of $2s$ congruent affine $(t - 1)$-cubes, whose centers form $s$ pairs that all have the same sum. 

\begin{lemma}\label{lem:find-cube}
    Let $s$ and $t$ be positive integers. Suppose that $n > 1$ and that $A$ is an $n$-element set such that \[\abs{A - A} \leq \left(\frac{n}{8s}\right)^{1 + \frac{1}{2^t - 1}}.\] Then there exist $d_1, \ldots, d_{t - 1} \in \RR_{> 0}$ and $a_1, \ldots, a_{2s} \in A$ such that $a_1 + a_{s + 1} = a_2 + a_{s + 2} = \cdots = a_s + a_{2s}$, and the $2^t\cdot s$ numbers \[a_i + \eps_1d_1 + \cdots + \eps_{t - 1}d_{t - 1}\] over all $1 \leq i \leq 2s$ and $\eps_1, \ldots, \eps_{t - 1} \in \{0, 1\}$ are all distinct and belong to $A$. 
\end{lemma}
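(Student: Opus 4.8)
The plan is to build the structure in a single iterated pass: extract $d_{t-1}, d_{t-2}, \ldots, d_1$ one at a time as differences that are simultaneously \emph{popular} (occurring often enough that we can keep going) and \emph{dissociated} (so the affine $(t-1)$-cube they span is proper), passing to a nested chain $A = B_0 \supseteq B_1 \supseteq \cdots \supseteq B_{t-1}$; then apply Lemma~\ref{lem:repeated-sums} to $B_{t-1}$ to produce the $s$ equal-sum pairs of centers, using a greedy selection to force all $2^t s$ resulting numbers to be distinct. Note first that the hypothesis is only nonvacuous for $n$ large in terms of $s$ and $t$: since $\abs{A - A} \geq n - 1$ always, the assumed bound forces $n - 1 \leq (n/8s)^{1 + 1/(2^t - 1)}$, which fails unless $n$ exceeds a threshold depending on $s$ and $t$. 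So I may freely assume $n$ is as large as needed; write $D = \abs{A - A}$.

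\textbf{The extraction loop.} Suppose $B_0 = A$, $d_{t-1}, \ldots, d_{t - j + 1}$, and $B_{j-1}$ with $\abs{B_{j-1}} = n_{j-1}$ have been chosen. I pick $d_{t-j} > 0$ with two properties: (i) writing $r(d) = \#\{a \in B_{j-1} : a + d \in B_{j-1}\}$, we have $r(d_{t-j}) \geq n_{j-1}^2/(8D)$; and (ii) the set $\{d_{t-j}, d_{t-j+1}, \ldots, d_{t-1}\}$ is dissociated (no $\{-1,0,1\}$-combination vanishes nontrivially). A pigeonhole count comparing $\sum_{d > 0} r(d) = \binom{n_{j-1}}{2}$ against the at most $D$ positive differences and the threshold $n_{j-1}^2/(8D)$ shows that more than $3^{t-1}$ values of $d$ satisfy (i), while at most $3^{t-1}$ values (namely $\pm\sum_{l > t-j} c_l d_l$ for $c_l \in \{-1,0,1\}$) are forbidden by (ii), so such a $d_{t-j}$ exists. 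Set $B_j = \{a \in B_{j-1} : a + d_{t-j} \in B_{j-1}\}$, so $n_j = r(d_{t-j}) \geq n_{j-1}^2/(8D)$; unwinding gives $n_{t-1} \geq n^{2^{t-1}}/(8D)^{2^{t-1}-1}$, and substituting $D \leq (n/8s)^{2^t/(2^t-1)}$ shows $n_{t-1}$ is at least a constant (in $s,t$) times $n^{2^{t-1}/(2^t-1)}$, hence enormous. The nesting also yields the \emph{lifting property}: by induction on $j$, every $x \in B_j$ satisfies $x + \eps_{t-j}d_{t-j} + \cdots + \eps_{t-1}d_{t-1} \in A$ for all $\eps_{t-j}, \ldots, \eps_{t-1} \in \{0,1\}$; in particular every $x \in B_{t-1}$ has $x + \eps_1 d_1 + \cdots + \eps_{t-1}d_{t-1} \in A$ for all $\eps \in \{0,1\}^{t-1}$.

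\textbf{The centers.} Let $C = \{\eps_1 d_1 + \cdots + \eps_{t-1}d_{t-1} : \eps \in \{0,1\}^{t-1}\}$; dissociation gives $\abs{C} = 2^{t-1}$ and $\#((C - C)\setminus\{0\}) = 3^{t-1} - 1$. Applying Lemma~\ref{lem:repeated-sums} to $B_{t-1}$, whose signed difference set has size $2\abs{B_{t-1} - B_{t-1}} + 1 \leq 2D + 1$, produces a value $c_0$ with at least $n_{t-1}^2/(2D+1)$ ordered representations $a + b = c_0$ with $a, b \in B_{t-1}$; discarding the at most one representation with $a = b$ and collapsing $(a,b)$ with $(b,a)$ gives at least $\tfrac12(n_{t-1}^2/(2D+1) - 1)$ pairwise disjoint unordered pairs $\{a,b\}$ with $a \neq b$ and $a + b = c_0$. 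After removing the at most $3^{t-1}$ of these pairs $\{a, c_0 - a\}$ with $2a - c_0 \in (C-C)\setminus\{0\}$, I greedily choose $s$ of the rest, $\{a_1, a_{s+1}\}, \ldots, \{a_s, a_{2s}\}$, so that every two of the $2s$ chosen elements differ by something outside $(C-C)\setminus\{0\}$; since each already-chosen element blocks at most $3^{t-1}$ further elements and the number of available pairs far exceeds $s \cdot 3^{t-1}$ (as $n$ is large), this succeeds. Now $a_1 + a_{s+1} = \cdots = a_s + a_{2s} = c_0$; each $a_i + \sum_j \eps_j d_j$ lies in $A$ by the lifting property; and they are all distinct, since an equality $a_i + \sum_j \eps_j d_j = a_{i'} + \sum_j \eps'_j d_j$ gives $a_i - a_{i'} \in C - C$, forcing $a_i = a_{i'}$ (hence $i = i'$) by the greedy separation, after which $\sum_j (\eps_j - \eps'_j) d_j = 0$ forces $\eps = \eps'$ by dissociation. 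This is exactly the claimed structure.

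\textbf{Main obstacle.} The real difficulty is guaranteeing that all $2^t s$ numbers are distinct: this is why the $d_j$ must be chosen dissociated rather than merely popular, why the centers need the extra separation condition, and why one must carefully track that enough room (more than $3^{t-1}$ popular-enough differences at each of the $t-1$ stages, and far more than $s \cdot 3^{t-1}$ equal-sum pairs at the end) survives all passes through the difference set. Once this bookkeeping is set up, everything else reduces to pigeonhole and the two cited lemmas.
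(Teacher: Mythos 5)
Your approach is genuinely different from the paper's. The paper proceeds by induction on $t$: at each stage it finds a popular difference $d_{t-1}$, forms a set $A' \subseteq A$ of matched endpoints so that $A'$ and $A' + d_{t-1}$ are disjoint, applies the inductive hypothesis to $A'$ to get the $(t-1)$-cube structure, and then lifts by $d_{t-1}$. Crucially, the distinctness of the resulting $2^t s$ elements is \emph{structural} — it is inherited from the disjointness of $A'$ and $A' + d_{t-1}$ at every level — so there is no dissociation condition and no separation argument to run at the end. Your single-pass scheme is an attractive reorganization, but it trades this structural distinctness for a counting argument (dissociated $d_j$'s plus a greedy selection of centers), and that trade is exactly where the proposal breaks.

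The gap is in the final greedy step, specifically the parenthetical \emph{``the number of available pairs far exceeds $s \cdot 3^{t-1}$ (as $n$ is large).''} This is false as stated: the number of available pairs is governed by $n_{t-1}^2/\abs{A-A}$, and although $n_{t-1}$ itself grows with $n$, the ratio does not. Following your own recursion $n_j \geq n_{j-1}^2/(8D)$ with $D = \abs{A - A}$ gives $n_{t-1} \geq n^{2^{t-1}}/(8D)^{2^{t-1}-1}$, so
\[
\frac{n_{t-1}^2}{D} \;\geq\; \frac{n^{2^t}}{8^{2^t - 2}\,D^{2^t - 1}} \;\geq\; \frac{n^{2^t}\,(8s)^{2^t}}{8^{2^t-2}\,n^{2^t}} \;=\; 64\,s^{2^t},
\]
where the middle step uses the hypothesis $D \leq (n/8s)^{2^t/(2^t-1)}$. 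This is a constant depending only on $s$ and $t$, not a quantity that tends to infinity with $n$. Consequently, after applying Lemma~\ref{lem:repeated-sums} to $B_{t-1}$, the number of disjoint equal-sum pairs you obtain is bounded above by roughly $16\,s^{2^t}$, not something unboundedly large. One must therefore actually \emph{check} that this constant exceeds the total blocking cost (the $\leq (3^{t-1}-1)/2$ bad-center pairs plus the $\lesssim 2s\cdot 3^{t-1}$ pairs blocked during the greedy), and with your chosen popularity threshold $n_{j-1}^2/(8D)$ this fails for small $s$: already for $s = 1$ and moderately large $t$ one has $16\,s^{2^t} = 16$, whereas the number of bad-center pairs alone grows like $3^{t-1}/2$. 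A different choice of threshold (closer to $\binom{n_{j-1}}{2}/D$, making the denominator base approach $4$ rather than $8$) would make the available-pairs constant grow like $2^{2^t}s^{2^t}$, which does dominate $3^{t-1}$; but that optimization is not in your write-up, and the justification you give for the step is incorrect regardless. As it stands, the proposal proves the lemma only under an unstated largeness assumption on $s$ (or after re-tuning constants), whereas the paper's inductive argument gives the exact constant $8s$ in the hypothesis uniformly in $s$ and $t$.
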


\begin{proof}
    We use induction on $t$. First note that since $n > 1$, we must have $\abs{A - A} \geq 1$.

    For the base case $t = 1$ (in which case we are given $\abs{A - A} \leq \frac{n^2}{64s^2}$ and we only need to find distinct $a_1, \ldots, a_{2s} \in A$ such that $a_1 + a_{s + 1} = \cdots = a_s + a_{2s}$), we have \[\#\{a - b \mid (a, b) \in A^2\} = 2\abs{A - A} + 1 \leq 3\abs{A - A} \leq \frac{3n^2}{64s^2} \leq \frac{n^2}{3s},\] so by Lemma \ref{lem:repeated-sums} there must exist some $c \in \RR$ for which $\#\{(a, b) \in A^2 \mid a + b = c\} \geq 3s$. Fix some $c \in \RR$ with this property. Then at most one of these pairs $(a, b)$ can have $a = b$, and of the remaining pairs, any two are either disjoint or the same up to order (i.e., $(a_2, b_2) = (b_1, a_1)$). So there must exist at least $\frac{3s - 1}{2} \geq s$ disjoint pairs of distinct elements $(a, b) \in A^2$ with $a + b = c$, and letting $(a_1, a_{s + 1})$, \ldots, $(a_s, a_{2s})$ be $s$ of these pairs gives the desired result. 

    For the inductive step, first note that the given condition on $\abs{A - A}$ can be rewritten as 
    \begin{align}
        \abs{A} \geq 8s \cdot \abs{A - A}^{1 - \frac{1}{2^t}}.\label{eqn:A-8s-hyp}
    \end{align} 
    This form of the condition will be more useful when we apply the inductive hypothesis.
    
    Assume that $t \geq 2$ and that we have proven the statement for $s$ and $t - 1$ (for all $n > 1$ and all $n$-element sets $A$ with the stated property); we will then  prove the statement for $s$ and $t$. First, since there are $\binom{n}{2} \geq \frac{n^2}{4}$ total positive differences between pairs of elements in $A$ (counting multiplicity), some difference must be repeated at least \[\frac{\binom{n}{2}}{\abs{A - A}} \geq \frac{n^2}{4\cdot \abs{A - A}} \geq \frac{64s^2 \cdot \abs{A - A}^{2 - \frac{1}{2^{t - 1}}}}{4\cdot \abs{A - A}} = 16s^2\cdot \abs{A - A}^{1 - \frac{1}{2^{t - 1}}} \geq 16s\cdot \abs{A - A}^{1 - \frac{1}{2^{t - 1}}}\] times; let $d_{t - 1} > 0$ be such a difference. For notational convenience, define \[m = 8s\cdot \abs{A - A}^{1 - \frac{1}{2^{t - 1}}},\] so that there are at least $2m$ pairs $(a, b) \in A^2$ with $a - b = d_{t - 1}$ (and $a \neq b$ in all such pairs). We claim that there are at least $m$ such \emph{disjoint} pairs. To see this, consider the graph on all such pairs $(a, b)$ in which two pairs are adjacent if they have an element in common. This graph must be a collection of vertex-disjoint paths (as all vertices have degree at most $2$, and there are no cycles), so we can obtain an independent set consisting of at least half the vertices (by taking alternating vertices on each path, starting with one endpoint), which provides at least $m$ disjoint pairs $(a, b) \in A^2$ with $a - b = d_{t - 1}$. 

    Now let $A'$ be the set of values of $a$ across these pairs, so that $A'$ and $A' + d_{t - 1}$ are disjoint and both contained in $A$, and \[\abs{A'} \geq m = 8s\cdot \abs{A - A}^{1 - \frac{1}{2^{t - 1}}} \geq 8s \cdot \abs{A' - A'}^{1 - \frac{1}{2^{t - 1}}}.\]
    
    Then we can apply the inductive hypothesis to $A'$, as $A'$ satisfies the condition \eqref{eqn:A-8s-hyp} for $t - 1$ (and we have $\abs{A'} \geq m \geq 8s > 1$). This gives that we can find $a_1, \ldots, a_{2s} \in A'$ and $d_1, \ldots, d_{t - 2} \in \RR_{>0}$ such that $a_1 + a_{s + 1} = \cdots = a_s + a_{2s}$ and the $2^{t - 1}\cdot s$ numbers \[a_i + \eps_1d_1 + \cdots + \eps_{t - 2}d_{t - 2}\] are all distinct and belong to $A'$. Then the $2^{t - 1}\cdot s$ numbers \[a_i + \eps_1d_1 + \cdots + \eps_{t - 2}d_{t - 2} + d_{t - 1}\] are all distinct and belong to $A' + d_{t - 1}$. Since $A'$ and $A' + d_{t - 1}$ are disjoint and both contained in $A$, all $2^t \cdot s$ numbers are distinct and belong to $A$, as desired. 
\end{proof}

The following lemma provides an upper bound on the number of distinct differences present in a structure of the form given by Lemma \ref{lem:find-cube}.

\begin{lemma}\label{lem:cube-few-diffs}
    Let $a_1$, \ldots, $a_{2s}$, $d_1$, \ldots, $d_{t - 1}$ be fixed real numbers such that $a_1 + a_{s + 1} = \cdots = a_s + a_{2s}$, and let $A' = \{a_i + \eps_1d_1 + \cdots + \eps_{t - 1}d_{t - 1} \mid 1 \leq i \leq 2s \text{ and }  \eps_1, \ldots, \eps_{t - 1} \in \{0, 1\}\}$. Then \[\abs{A' - A'} \leq 3^{t - 1}s^2 + \frac{3^{t - 1} - 1}{2}.\] 
\end{lemma}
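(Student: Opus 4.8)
The plan is to write $A'$ as a union of $2s$ translates of a single affine $(t-1)$-cube, and then bound $\abs{A'-A'}$ by a product of two small sets: the set of pairwise differences of the translation vectors, and the difference set of the single cube. The only place where the hypothesis $a_1+a_{s+1}=\cdots=a_s+a_{2s}$ enters is in bounding the first of these, and that is the one step with any content.

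First I would set $C=\{\eps_1d_1+\cdots+\eps_{t-1}d_{t-1}\mid \eps_1,\ldots,\eps_{t-1}\in\{0,1\}\}$, so that $A'=\{a_i+c\mid 1\le i\le 2s,\ c\in C\}$. Any difference $x-y$ with $x,y\in A'$ then has the form $(a_i-a_j)+(c-c')$ for some indices $i,j$ and some $c,c'\in C$, so the set of all such differences (including $0$ and negatives) is contained in $E+D$, where $E=\{a_i-a_j\mid 1\le i,j\le 2s\}$ and $D=C-C$. Since $\#\{x-y\mid x,y\in A'\}=2\abs{A'-A'}+1$ (as noted just after Lemma \ref{lem:repeated-sums}), it suffices to show $\abs{E+D}\le(2s^2+1)\cdot 3^{t-1}$, and for this I would prove $\abs{D}\le 3^{t-1}$ and $\abs{E}\le 2s^2+1$ separately and use $\abs{E+D}\le\abs{E}\abs{D}$.

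The bound $\abs{D}\le 3^{t-1}$ is immediate, since $C-C=\{\eta_1d_1+\cdots+\eta_{t-1}d_{t-1}\mid \eta_1,\ldots,\eta_{t-1}\in\{-1,0,1\}\}$. For $\abs{E}$, write $\sigma$ for the common value $a_1+a_{s+1}=\cdots=a_s+a_{2s}$, so that $a_{s+i}=\sigma-a_i$ for $1\le i\le s$. Splitting the pairs $(i,j)\in\{1,\ldots,2s\}^2$ according to whether each of $i,j$ is at most $s$ shows that every element of $E$ is of the form $a_i-a_j$, $a_i+a_j-\sigma$, or $\sigma-a_i-a_j$ with $i,j\in\{1,\ldots,s\}$; that is, $E\subseteq E_1\cup E_2\cup(-E_2)$, where $E_1=\{a_i-a_j\mid 1\le i,j\le s\}$ and $E_2=\{a_i+a_j-\sigma\mid 1\le i,j\le s\}$. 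Now $E_1$ consists of $0$ (from $i=j$) together with at most $s(s-1)$ further values, so $\abs{E_1}\le s^2-s+1$; and $a_i+a_j$ is symmetric in $i$ and $j$, so $\abs{E_2}\le\binom{s+1}{2}$. Hence $\abs{E}\le(s^2-s+1)+2\binom{s+1}{2}=2s^2+1$.

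Combining these gives $\#\{x-y\mid x,y\in A'\}\le\abs{E}\abs{D}\le(2s^2+1)3^{t-1}$, so $\abs{A'-A'}\le\frac{(2s^2+1)3^{t-1}-1}{2}=3^{t-1}s^2+\frac{3^{t-1}-1}{2}$, which is exactly the claimed bound (and it is automatically an integer, since $(2s^2+1)3^{t-1}$ is odd). I do not anticipate a genuine obstacle: the two points needing care are the factor-of-two bookkeeping between positive and signed differences at the end, and carrying out the four-way case split for $E$ cleanly — the latter being precisely where the saving of roughly a factor of $2$ over the trivial estimate $\abs{A'-A'}\lesssim 4s^2\cdot 3^{t-1}$ comes from.
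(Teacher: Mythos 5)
Your proof is correct and follows essentially the same route as the paper: decompose each signed difference as (a difference of $a_i$'s) plus (a signed $\{-1,0,1\}$-combination of the $d_j$'s), bound these two factors by $2s^2+1$ and $3^{t-1}$ respectively, and then halve to pass from signed to positive differences. The only difference is cosmetic: where the paper invokes Remark~\ref{rmk:bounding-diffs-equality} (with $k=2s$, $d=s+1$) to get the bound of $s^2$ on positive differences among the $a_i$, you rederive the equivalent bound $\abs{E}\le 2s^2+1$ from scratch via the clean case split $E\subseteq E_1\cup E_2\cup(-E_2)$, which makes the lemma self-contained at no real cost.
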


\begin{proof}
    Any difference $a - b$ for $(a, b) \in A^2$, including negative differences and zero, is of the form \[(a_{i_1} - a_{i_2}) + \lambda_1d_1 + \cdots + \lambda_{t - 1}d_{t - 1},\] where $1 \leq i_1, i_2 \leq 2s$ and $\lambda_1, \ldots, \lambda_{t - 1} \in \{-1, 0, 1\}$. The argument in Remark \ref{rmk:bounding-diffs-equality} (with $k = 2s$ and $d = s + 1$) shows that there are at most $s^2$ positive differences among $a_1$, \ldots, $a_{2s}$, and therefore at most $2s^2 + 1$ possible values of $a_{i_1} - a_{i_2}$ (including negative values and zero). Meanwhile, there are $3^{t - 1}$ choices of values for $\lambda_1$, \ldots, $\lambda_{t - 1}$; this means there are at most $3^{t - 1}(2s^2 + 1)$ such differences. Since this includes negative differences and zero, this means \[\abs{A' - A'} \leq \frac{3^{t - 1}(2s^2 + 1) - 1}{2} = 3^{t - 1}s^2 + \frac{3^{t - 1} - 1}{2}.\qedhere\] 
\end{proof}

Together, Lemmas \ref{lem:find-cube} and \ref{lem:cube-few-diffs} immediately imply Theorem \ref{thm:lower-bound}. 

\begin{proof}[Proof of Theorem \ref{thm:lower-bound}] 
    Let $s = \frac{k}{2^t}$ and $\ell = 3^{t - 1}s^2 + \frac{3^{t - 1} + 1}{2}$; then we claim that for all $n$, we have \[g(n, k, \ell) > \left(\frac{n}{8s}\right)^{1 + \frac{1}{2^t - 1}}.\] Assume not; then there exists an $n$-element set $A$ with $\abs{A - A} \leq \left(\frac{n}{8s}\right)^{1 + \frac{1}{2^t - 1}}$ satisfying the $(k, \ell)$-local property. But by Lemma \ref{lem:cube-few-diffs} we can find a $k$-element subset $A' \subseteq A$ of the form \[A' = \{a_i + \eps_1d_1 + \cdots + \eps_{t - 1}d_{t - 1} \mid 1 \leq i \leq 2s \text{ and } \eps_1, \ldots, \eps_{t - 1} \in \{0, 1\}\}\] for some $a_1$, \ldots, $a_{2s}$, $d_1$, \ldots, $d_{t - 1}$ satisfying $a_1 + a_{s + 1} = \cdots = a_{s} + a_{2s}$, and by Lemma \ref{lem:cube-few-diffs} we must have $\abs{A' - A'} \leq \ell - 1$, contradicting the assumption that $A$ satisfies the $(k, \ell)$-local property. 
\end{proof}

\section{Discussion of consequences} \label{sec:consequences}

In this section, we deduce and discuss the more intuitive consequences on the behavior of $g(n, k, \ell)$ stated in the beginning of Subsection \ref{subsec:results} (i.e., Corollaries \ref{cor:even-qt}, \ref{cor:odd-qt}, \ref{cor:nc-threshold}, \ref{cor:poly-bounds}, and \ref{cor:sk-loglogk}). 

\subsection{The quadratic threshold}

We begin by discussing our results on the quadratic threshold (i.e., Corollaries \ref{cor:even-qt} and \ref{cor:odd-qt}). 

When $k$ is even, Theorems \ref{thm:upper-bound} and \ref{thm:lower-bound} together immediately imply Corollary \ref{cor:even-qt}:
\begin{itemize}
    \item Applying Theorem \ref{thm:upper-bound} with $c = 2$ gives that $g(n, k, \frac{k^2}{4}) = o(n^2)$. 
    \item Applying Theorem \ref{thm:lower-bound} with $t = 1$ gives that $g(n, k, \frac{k^2}{4} + 1) = \Omega(n^2)$. 
\end{itemize}
In particular, these two bounds together mean that the quadratic threshold is exactly $\frac{k^2}{4} + 1$. 

\begin{remark}\label{rmk:why-qt}
    We now comment on why the proofs of Theorems \ref{thm:upper-bound} and \ref{thm:lower-bound} give matching values of $\ell$ when $k$ is even (thus allowing us to determine the exact quadratic threshold). To make this discussion more convenient, for a $k$-configuration $\cC$ we use $d(\cC)$ to denote the number of distinct differences in a generic solution to $\cC$ (it can be shown that $d(\cC)$ is well-defined, and that \emph{any} solution to $\cC$ has at most $d(\cC)$ distinct differences). 
    
    Let $k = 2s$. Then the idea of our proof of Theorem \ref{thm:lower-bound} for $t = 1$ is essentially that if $\abs{A - A}$ is small compared to $n^2$, then the specific $k$-configuration \[\cC_{\text{sum}} = \{x_1 + x_{s + 1} = x_2 + x_{s + 2} = \cdots = x_s + x_{2s}\}\] must occur in $A$. This gives a lower bound of $g(n, k, \ell) = \Omega(n^2)$ where $\ell = d(\cC_{\text{sum}}) + 1$ (because if $\abs{A - A}$ is small, then the occurrence of $\cC_{\text{sum}}$ in $A$ has at most $d(\cC_{\text{sum}}) < \ell$ distinct differences, contradicting the $(k, \ell)$-local property). 

    On the other hand, when we prove Theorem \ref{thm:upper-bound} for $c = 2$, we show that there exist $n$-element sets $A$ with $o(n^2)$ distinct differences such that every $k$-configuration that occurs in $A$ is $2$-good; this gives a bound of $g(n, k, \ell) = o(n^2)$ where $\ell$ is the minimum value of $d(\cC)$ over all $2$-good $k$-configurations $\cC$. 
    
    But as stated in Remark \ref{rmk:bounding-diffs-equality} (with $k = 2s$ and $d = s + 1$), this minimum is achieved by $\cC_{\text{sum}}$ (since it is an equality case of Lemma \ref{lem:bounding-diffs}, which gives a lower bound on $d(\cC)$ for $2$-good $\cC$). So the value of $\ell$ in our bound of $g(n, k, \ell) = o(n^2)$ actually ends up being $d(\cC_{\text{sum}})$; this is why the values of $\ell$ in the two bounds differ by exactly $1$. 
    
    In other words, we construct sets $A$ with subquadratically many distinct differences such that one of the `worst' $k$-configurations that occurs in our construction --- i.e., one with minimal $d(\cC)$ --- actually has to occur in \emph{every} set $A$ with subquadratically many distinct differences; this is what allows us to determine the quadratic threshold. 
\end{remark}

When $k$ is odd, we will use Proposition \ref{prop:odd-upper-bound} (in place of Theorem \ref{thm:upper-bound}) to obtain the bound of $o(n^2)$. Meanwhile, to obtain the bound of $\Omega(n^2)$, we extend the bound from Theorem \ref{thm:lower-bound} to odd $k$ using the following simple observation. 

\begin{fact}\label{fact:monotonicity-k}
    For all $n \geq k \geq 2$ and $\ell$, we have $g(n, k, \ell) \geq g(n, k - 1, \ell - k + 1)$. 
\end{fact}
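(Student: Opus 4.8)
The plan is to reduce the inequality between the functions $g$ to an inclusion between the corresponding classes of sets. Specifically, I would show that every $n$-element set $A \subseteq \RR$ satisfying the $(k, \ell)$-local property also satisfies the $(k-1, \ell - k + 1)$-local property. Since $g(n, k-1, \ell - k + 1)$ is the minimum of $\abs{A - A}$ over a class of sets that then contains \emph{every} set witnessing the value of $g(n, k, \ell)$, this implication immediately gives $g(n, k, \ell) \geq g(n, k-1, \ell - k + 1)$.

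So first I would fix an $n$-element set $A$ satisfying the $(k, \ell)$-local property together with an arbitrary $(k-1)$-element subset $B \subseteq A$, with the goal of proving $\abs{B - B} \geq \ell - k + 1$. Since $n \geq k$, we have $\abs{A \setminus B} = n - k + 1 \geq 1$, so we may choose some $a \in A \setminus B$ and form the $k$-element subset $A' = B \cup \{a\} \subseteq A$.

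Next I would compare $A' - A'$ with $B - B$. Every positive difference of two elements of $A'$ either is a difference of two elements of $B$ (and hence lies in $B - B$) or involves $a$, in which case it equals $\abs{a - b}$ for some $b \in B$; there are at most $\abs{B} = k - 1$ differences of the latter type. Hence $\abs{A' - A'} \leq \abs{B - B} + (k - 1)$. Applying the $(k, \ell)$-local property to $A'$ gives $\abs{A' - A'} \geq \ell$, and combining the two bounds yields $\abs{B - B} \geq \ell - k + 1$, which is exactly the $(k-1, \ell - k + 1)$-local property for $A$. (When $\ell > \binom{k}{2}$ the statement is trivial, as then $g(n, k, \ell) = +\infty$, but the argument above handles this case uniformly with the rest.)

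I do not anticipate any genuine obstacle here: the argument is a one-step extension-and-counting argument. The only point requiring a moment of care is the hypothesis $n \geq k$, which is precisely what ensures that $B$ can be enlarged to a $k$-element subset while staying inside $A$.
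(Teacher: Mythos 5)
Your proof is correct and takes essentially the same route as the paper: show that the $(k,\ell)$-local property implies the $(k-1,\ell-k+1)$-local property by enlarging an arbitrary $(k-1)$-element subset to a $k$-element one and noting the extra element contributes at most $k-1$ new differences. No differences worth noting.
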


\begin{proof}
    If a set $A \subseteq \RR$ satisfies the $(k, \ell)$-local property, then we claim that it must also satisfy the $(k - 1, \ell - k + 1)$-local property. To see this, for any $(k - 1)$-element subset $A' \subseteq A$, choose any $a \in A \setminus A'$. Then $A' \cup \{a\}$ has $k$ elements, so by assumption it contains at least $\ell$ distinct differences. Then removing $a$ removes at most $k - 1$ of these differences, so $A'$ contains at least $\ell - k + 1$ distinct differences, as desired. 

    Then since $g(n, k, \ell)$ is defined as the \emph{minimum} value of $\abs{A - A}$ over all sets $A$ satisfying the $(k, \ell)$-local property, it follows that $g(n, k, \ell) \geq g(n, k - 1, \ell - k + 1)$.
\end{proof}

Now for odd $k$, we can deduce Corollary \ref{cor:odd-qt} as well:
\begin{itemize}
    \item Proposition \ref{prop:odd-upper-bound} immediately gives that \[g\left(n, k, \frac{(k + 1)^2}{4} - 4\right) = o(n^2).\] 
    \item Applying Theorem \ref{thm:lower-bound} with $t = 1$ to $k - 1$ (which is even) and using Fact \ref{fact:monotonicity-k} gives that \[g\left(n, k, \frac{(k + 1)^2}{4}\right) \geq g\left(n, k - 1, \frac{(k + 1)^2}{4} - k + 1\right) = g\left(n, k - 1, \frac{(k - 1)^2}{4} + 1\right) = \Omega(n^2).\] 
\end{itemize}
In particular, these bounds mean that the quadratic threshold for odd $k$ is between $\frac{(k + 1)^2}{4} - 3$ and $\frac{(k + 1)^2}{4}$. 

\begin{remark}
    Similarly to Remark \ref{rmk:why-qt}, we now comment on why the values of $\ell$ in these two bounds differ by $4$ (or equivalently, why our bounds on the quadratic threshold for odd $k$ differ by $3$). Let $k = 2s + 1$. Then the proof of our bound $g(n, k, \ell) = \Omega(n^2)$ can be viewed as showing that if $\abs{A - A}$ is small compared to $n^2$, then the $k$-configuration \[\cC_{\text{sum}} = \{x_1 + x_{s + 1} = x_2 + x_{s + 2} = \cdots = x_s + x_{2s}\}\] (in which there is no equation involving $x_{2s + 1}$) must occur in $A$, and therefore the value of $\ell$ in the bound is $d(\cC_{\text{sum}}) + 1$. (This is not how we phrased the argument, but it is equivalent in some sense, and gives the same value of $\ell$.) 
    
    On the other hand, we again have a bound of $g(n, k, \ell) = o(n^2)$ where $\ell$ is the minimum value of $d(\cC)$ over all $2$-good $k$-configurations $\cC$. As stated in Remark \ref{rmk:odd-bounding-equality}, for $k \geq 7$ this minimum is achieved by \[\cC_{\text{sum}}^+ = \{x_1 + x_{s + 1} = x_2 + x_{s + 2} = \cdots = x_s + x_{2s}, \, x_1 - x_2 - x_3 + x_{2s + 1} = 0\}\] (since $\cC_{\text{sum}}^+$ is an equality case of Lemma \ref{lem:odd-bounding-diffs}, our bound on $d(\cC)$ for $2$-good $\cC$), so the value of $\ell$ we get in this bound is $d(\cC_{\text{sum}}^+)$. The additional equation in $\cC_{\text{sum}}^+$ compared to $\cC_{\text{sum}}$ means we have $d(\cC_{\text{sum}}^+) = d(\cC_{\text{sum}}) - 3$, as we gain the equalities $x_{2s + 1} - x_2 = x_3 - x_1$, $x_{2s + 1} - x_3 = x_2 - x_1$, and $x_{2s + 1} - x_{s + 1} = x_3 - x_{s + 2}$ (or $x_{2s + 1} - x_{s + 1} = x_2 - x_{s + 3}$); this causes the two values of $\ell$ to differ by $4$. 

    In fact, for $k = 3$ and $5$, one can show that the minimum value of $d(\cC)$ over $2$-good $k$-configurations $\cC$ is achieved by $\cC_{\text{sum}}$ (for $k = 3$, a valid and AP-free $3$-configuration cannot contain any difference equalities; for $k = 5$, by Lemma \ref{lem:2-eqns-5-vars} a valid and AP-free $5$-configuration contains at most one difference equality), and therefore the quadratic threshold is exactly $d(\cC_{\text{sum}}) + 1 = \frac{(k + 1)^2}{4}$. However, determining the exact quadratic threshold for odd $k \geq 7$ remains an interesting open problem. 
\end{remark}

\begin{remark}
    It is also natural to ask how $g(n, k, \ell)$ behaves for $\ell$ immediately below the quadratic threshold. For even $k$, Theorem \ref{thm:upper-bound} in fact gives that $g(n, k, \frac{k^2}{4}) = o(n^c)$ for all $c > 2 - \frac{2}{k}$ --- for all such $c$ we have $\frac{(c - 1)(k - 1)}{c} > \frac{k}{2} - 1$, so the value of $\ell$ for which Theorem \ref{thm:upper-bound} gives a bound of $o(n^c)$ is still $\frac{k^2}{4}$. On the other hand, the best known lower bound on $g(n, k, \frac{k^2}{4})$ is the bound \eqref{eqn:fps} due to Fish, Pohoata, and Sheffer \cite{FLS19} with $r = 3$, which gives that $g(n, k, \frac{k^2}{4}) = \Omega(n^{3/2 - 9/k})$ for all $k \geq 18$ divisible by $6$.

    These two bounds have very different behavior --- the exponent $2 - \frac{2}{k}$ in the upper bound grows arbitrarily close to $2$ as $k \to \infty$, while the exponent in the lower bound is always at most $\frac{3}{2}$, and is therefore bounded away from $2$. It would be interesting to understand which behavior is `correct' --- does there exist a constant $\eps > 0$ not depending on $k$ such that for all $k$, for all $\ell$ we either have $g(n, k, \ell) = \Omega(n^2)$ or $g(n, k, \ell) = O(n^{2 - \eps})$? (It would also be interesting to answer this question for $k$ restricted to any infinite set, e.g., for all even $k$.) 
\end{remark}

\subsection{Intermediate bounds}

For other exponents $1 < c < 2$, the values of $\ell$ in Theorems \ref{thm:upper-bound} and \ref{thm:lower-bound} for which we can bound $g(n, k, \ell)$ above and below by $n^c$ are farther apart. However, both values of $\ell$ are quadratic in $k$ (considering $c$ and $t$ as constants not depending on $k$ in the two theorems), with leading coefficient tending to $0$ as $c$ tends to $1$. We will use this fact to deduce Corollaries \ref{cor:nc-threshold} and \ref{cor:poly-bounds}. To do so, we first use Fact \ref{fact:monotonicity-k} to extend Theorem \ref{thm:lower-bound} to \emph{all} $k$, rather than just multiples of $2^t$. 

\begin{corollary}\label{cor:lower-for-nonmultiples}
    For all positive integers $t$ and $k$, we have \[g\left(n, k, \frac{3^{t - 1}k^2}{4^t} + 2^tk + \frac{3^{t - 1} + 1}{2}\right) = \Omega(n^{1 + \frac{1}{2^t - 1}}).\] 
\end{corollary}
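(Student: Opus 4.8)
The plan is to derive this from Theorem~\ref{thm:lower-bound} (which handles multiples of $2^t$) together with the monotonicity in $k$ given by Fact~\ref{fact:monotonicity-k}, paying for the passage from a general $k$ to a nearby multiple of $2^t$ with the extra linear term $2^t k$ in $\ell$.

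First I would dispose of the trivial range $k < 2^t$. Here $2^t \geq k+1$, so the value of $\ell$ in the statement exceeds $2^t k \geq (k+1)k > \binom{k}{2}$; hence no $n$-element set satisfies the $(k,\ell)$-local property, $g(n,k,\ell) = +\infty$, and the bound holds vacuously. So assume $k \geq 2^t$ and set $k'' = 2^t\lfloor k/2^t\rfloor$, so that $2^t \leq k'' \leq k$ and $0 \leq k - k'' \leq 2^t - 1$.

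Next I would iterate Fact~\ref{fact:monotonicity-k}, which is applicable for all large $n$ (once $n \geq k$), noting that every intermediate value $j$ of its middle parameter satisfies $k'' \leq j \leq k$ and hence $j \geq 2$. Stepping down from $k$ to $k''$ gives, for every $\ell$,
\[
g(n, k, \ell) \;\geq\; g\!\left(n,\, k'',\, \ell - \sum_{m=k''}^{k-1} m\right).
\]
Now I would take $\ell = \tfrac{3^{t-1}k^2}{4^t} + 2^t k + \tfrac{3^{t-1}+1}{2}$ and bound the subtracted sum crudely: it has $k - k'' \leq 2^t - 1$ terms, each smaller than $k$, so $\sum_{m=k''}^{k-1} m < (2^t - 1)k < 2^t k$. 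Combining this with $k \geq k''$ yields
\[
\ell - \sum_{m=k''}^{k-1} m \;>\; \frac{3^{t-1}k^2}{4^t} + \frac{3^{t-1}+1}{2} \;\geq\; \frac{3^{t-1}(k'')^2}{4^t} + \frac{3^{t-1}+1}{2}.
\]
Since $g(n,k'',\cdot)$ is weakly increasing in its last argument and $k''$ is a positive multiple of $2^t$, Theorem~\ref{thm:lower-bound} applied with $k''$ in place of $k$ gives $g\big(n, k'', \tfrac{3^{t-1}(k'')^2}{4^t} + \tfrac{3^{t-1}+1}{2}\big) = \Omega\big(n^{1 + \frac{1}{2^t - 1}}\big)$; chaining the two displayed inequalities then finishes the proof.

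There is no real obstacle here — the argument is essentially bookkeeping. The only points needing a moment's care are confirming that the $2^t k$ slack in the target value of $\ell$ genuinely dominates the sum $\sum_{m=k''}^{k-1} m$ incurred when stepping down from $k$ to $k''$ (it does, since that sum is at most $(2^t-1)k$), and that the residual comparison $\tfrac{3^{t-1}k^2}{4^t} \geq \tfrac{3^{t-1}(k'')^2}{4^t}$ goes in the right direction (it does, as $k \geq k''$).
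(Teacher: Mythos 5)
Your proof is correct and takes essentially the same approach as the paper: step down from $k$ to the largest multiple of $2^t$ at most $k$ by iterating Fact~\ref{fact:monotonicity-k}, absorb the at-most-$(2^t - 1)k$ deduction into the $2^t k$ slack in $\ell$, and then apply Theorem~\ref{thm:lower-bound}. You are marginally more careful than the paper in explicitly disposing of the vacuous case $k < 2^t$ (where $\ell > \binom{k}{2}$) and in verifying that Fact~\ref{fact:monotonicity-k} is applicable at each intermediate step.
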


\begin{proof}
    Let $r$ be the remainder when $k$ is divided by $2^t$, so that $0 \leq r < 2^t$. Then applying Theorem \ref{thm:lower-bound} to $k - r$ (which is divisible by $2^t$) gives 
    \begin{align}
        g\left(n, k - r, \frac{3^{t - 1}(k - r)^2}{4^t} + \frac{3^{t - 1} + 1}{2}\right) = \Omega(n^{1 + \frac{1}{2^t - 1}}).\label{eqn:k-r-lower}
    \end{align} 
    Meanwhile, by repeatedly applying Fact \ref{fact:monotonicity-k}, we have 
    \begin{align}
        g(n, k, \ell) \geq g(n, k - r, \ell - (k - 1) - (k - 2) - \cdots - (k - r)) \geq g(n, k - r, \ell - kr)\label{eqn:lifting-r}
    \end{align} 
    for all $\ell$. Since $r < 2^t$, we have \[\frac{3^{t - 1}k^2}{4^t} + 2^tk + \frac{3^{t - 1} + 1}{2} - kr \geq \frac{3^{t - 1}k^2}{4^t} + \frac{3^{t - 1} + 1}{2} \geq \frac{3^{t - 1}(k - r)^2}{4^t} + \frac{3^{t - 1} + 1}{2},\] so combining \eqref{eqn:k-r-lower} and \eqref{eqn:lifting-r} gives the desired bound. (It is possible to improve the lower-order terms in our expression for $\ell$ by being slightly more careful with these bounds; but only the leading term matters for the following analysis, so we use the above expression for $\ell$ because it is simpler.)
\end{proof}

Using this, we can immediately deduce Corollary \ref{cor:nc-threshold}, which essentially states that for any given constant $c$, the threshold for $\Omega(n^c)$ is quadratic in $k$ (the precise statement is slightly stronger than this). 

\begin{proof}[Proof of Corollary \ref{cor:nc-threshold}]
    First we find a constant $a_1 > 0$ such that $g(n, k, a_1k^2) = o(n^c)$ for large $k$ (providing a lower bound on the threshold for $\Omega(n^c)$ that is quadratic in $k$).
    
    Choose $a_1 = \frac{1}{2}(\frac{c - 1}{c})^2$ (any choice of $a_1$ with $a_1 < (\frac{c - 1}{c})^2$ would work for the same reason; we choose one for concreteness). Then for all sufficiently large $k$ we have \[a_1k^2 \leq \ceil{\frac{(c - 1)(k - 1)}{c}}^2,\] so by Theorem \ref{thm:upper-bound} we have \[g(n, k, a_1k^2) \leq g\left(n, k, \ceil{\frac{(c - 1)(k - 1)}{c}}^2\right) = o(n^c).\] 

    Now we find a constant $0 < a_2 < \frac{1}{2}$ such that $g(n, k, a_2k^2) = \Omega(n^c)$ for large $k$ (providing an upper bound for the threshold for $\Omega(n^c)$ that is quadratic in $k$); note that this automatically implies $a_2 > a_1$. 
    
    Let $t$ be the largest positive integer such that $c \leq 1 + \frac{1}{2^t - 1}$, and choose $a_2 = \frac{1}{2}(\frac{3}{4})^{t + 1}$ (any $a_2 > \frac{3^{t - 1}}{4^t}$ would work for the same reason; we choose this one for concreteness and because it makes the computations in Remark \ref{rmk:nc-threshold-discrep} slightly nicer). Then for all sufficiently large $k$ we have \[a_2k^2 \geq \frac{3^{t - 1}k^2}{4} + 2^tk + \frac{3^{t - 1} + 1}{2},\] so by Corollary \ref{cor:lower-for-nonmultiples} we have \[g(n, k, a_2k^2) \geq g\left(n, k, \frac{3^{t - 1}k^2}{4} + 2^tk + \frac{3^{t - 1} + 1}{2}\right) = \Omega(n^c).\] 
    
    Finally, note that as $c \to 1$, we have $t \to \infty$ and therefore $a_2 = \frac{1}{2}(\frac{3}{4})^{t + 1} \to 0$; since $a_1 < a_2$, this means we have $a_1 \to 0$ as well. 
\end{proof}

\begin{remark}\label{rmk:nc-threshold-discrep}
    We now analyze the discrepancy between the values of $a_1$ and $a_2$ given in the above proof. To do so, we first obtain bounds on $a_2$ purely in terms of $c$ --- note that $c \leq 1 + \frac{1}{2^s - 1}$ if and only if $2^s \leq \frac{c}{c - 1}$, and therefore the value of $t$ in the above proof must satisfy \[2^t \leq \frac{c}{c - 1} < 2^{t + 1}.\] Since we chose $a_2 = \frac{1}{2}(\frac{3}{4})^{t + 1}$, this means we have \[\frac{3}{4}\left(\frac{c - 1}{c}\right)^{\log_2 \frac{4}{3}} \leq 2a_2 < \left(\frac{c - 1}{c}\right)^{\log_2 \frac{4}{3}}.\] Meanwhile, we set $a_1 = \frac{1}{2}(\frac{c - 1}{c})^2$, so we have \[\frac{3}{4}(2a_1)^{\log_4 \frac{4}{3}} \leq 2a_2 < (2a_1)^{\log_4 \frac{4}{3}}.\] 
\end{remark}

We can also prove Corollary \ref{cor:poly-bounds}, which gives upper and lower bounds of the form $n^c$ when $\ell$ is quadratic in $k$, in the same way. 

\begin{proof}[Proof of Corollary \ref{cor:poly-bounds}]
    To obtain the lower bound of $g(n, k, ak^2) = \Omega(n^{c_1})$, let $t$ be the smallest positive integer such that $a > \frac{3^{t - 1}}{4^t}$, and let $c_1 = 1 + \frac{1}{2^t - 1}$. Then for all sufficiently large $k$ we have \[ak^2 \geq \frac{3^{t - 1}}{4^t}k^2 + 2^tk + \frac{3^{t - 1} + 1}{2},\] so by Corollary \ref{cor:lower-for-nonmultiples} we have \[g(n, k, ak^2) \geq g\left(n, k, \frac{3^{t - 1}}{4^t}k^2 + 2^tk + \frac{3^{t - 1} + 1}{2}\right) = \Omega(n^{c_1}).\] 
    
    We now turn to the upper bound. Given $0 < a < \frac{1}{4}$, choose $1 < c_2 < 2$ such that $a < (\frac{c_2 - 1}{c_2})^2$ (this is possible because $(\frac{c - 1}{c})^2$ ranges over $(0, \frac{1}{4})$ as $c$ ranges over $(1, 2)$); it is possible to choose such a $c_2$ as an explicit function of $a$ such that $c_2 \to 0$ as $a \to 0$. Then for all sufficiently large $k$ we have \[ak^2 \leq \ceil{\frac{(c_2 - 1)(k - 1)}{c_2}}^2,\] so by Theorem \ref{thm:upper-bound}, we have \[g(n, k, ak^2) \leq g\left(n, k, \ceil{\frac{(c_2 - 1)(k - 1)}{c_2}}^2\right) = o(n^{c_2}).\] (Note that these two bounds automatically imply that $c_1 < c_2$, so $c_1 \to 1$ as $a \to 0$ as well.) 
\end{proof}

\subsection{On the number of possible exponents}

As stated in Subsection \ref{subsec:results}, for any fixed $k$, we can attempt to understand which exponents `appear' in our asymptotics for $g(n, k, \ell)$ over all $\ell$; to formalize this, we define the set \[S_k = \left\{\liminf_{n \to \infty} \frac{\log g(n, k, \ell)}{\log n} \bigm\vert 0 \leq \ell \leq \binom{k}{2}\right\}.\] 

Clearly $1 \in S_k$ (as $g(n, k, k - 1) = n - 1$), and $2 \in S_k$ for $k \geq 4$ (as $g(n, k, \binom{k}{2}) = \binom{n}{2}$). The currently known bounds on $g(n, k, \ell)$ are not tight enough to allow us to determine any other values that must be in $S_k$. However, we \emph{can} use Theorems \ref{thm:upper-bound} and \ref{thm:lower-bound} to show that $\abs{S_k}$ tends to infinity as $k$ does with a growth rate of at least $\log\log k$, as stated in Corollary \ref{cor:sk-loglogk}. 

\begin{proof}[Proof of Corollary \ref{cor:sk-loglogk}]
    First, it suffices to prove there exists $a$ such that $\abs{S_k} \geq a\log\log k$ for all sufficiently large $k$; then we can adjust $a$ to deal with the finitely many remaining values of $k$ (as $\abs{S_k} \geq 1$ for all $k$). For notational convenience, let \[e(\ell) = \liminf_{n \to \infty} \frac{\log g(n, k, \ell)}{\log n}.\] Since $g(n, k, \ell)$ is (weakly) increasing in $\ell$, the same is true of $e(\ell)$. We wish to find at least $a\log \log k$ values of $\ell$ with distinct values of $e(\ell)$ (for some absolute constant $a > 0$). The idea of the proof is to choose values $\ell_1 > \ell_2 > \cdots$ sufficiently spaced out so that for each $i$, the lower bound on $e(\ell_i)$ given by Theorem \ref{thm:lower-bound} (or rather, Corollary \ref{cor:lower-for-nonmultiples}) is greater than the upper bound on $e(\ell_{i + 1})$ given by Theorem \ref{thm:upper-bound}, which implies that we must have $e(\ell_i) > e(\ell_{i + 1})$. 

    To do so, for each positive integer $t$, let \[\ell^+(t) = \frac{3^{t - 1}k^2}{4^t} + 2^tk + \frac{3^{t - 1} + 1}{2}\] (this is the value of $\ell$ for which Corollary \ref{cor:lower-for-nonmultiples} gives $g(n, k, \ell) = \Omega(n^{1 + \frac{1}{2^t - 1}})$), and let \[\ell^-(t) = \ceil{\frac{k - 1}{2^t}}^2\] (this is the value of $\ell$ for which Theorem \ref{thm:upper-bound} gives $g(n, k, \ell) = o(n^{1 + \frac{1}{2^t - 1}})$). Then we have \[e(\ell^-(t)) \leq 1 + \frac{1}{2^t - 1} \leq e(\ell^+(t))\] for all positive integers $t$. 
    
    We will choose $\ell_i = \ell^+(t_i)$ for certain values of $t_i$. This will automatically give lower bounds on our values of $e(\ell_i)$; the following claim will allow us to obtain comparable upper bounds. 

    \begin{claim}\label{claim:10t-for-exps}
        For any integer $t \geq 2$ such that $k \geq 3 \cdot 4^{10t}$, we have $\ell^+(10t) \leq \ell^-(t + 1)$. 
    \end{claim}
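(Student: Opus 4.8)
The plan is a direct computation: bound $\ell^+(10t)$ from above by a clean expression, bound $\ell^-(t+1)$ from below, and check that the former is the smaller of the two using $t \geq 2$ together with the hypothesis $k \geq 3 \cdot 4^{10t}$. There is no conceptual content here beyond bookkeeping, so the real task is to organize the inequalities so that the constant $3$ in the hypothesis is visibly enough.

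First I would control the two lower-order terms in
\[
\ell^+(10t) = \frac{3^{10t - 1}}{4^{10t}}k^2 + 2^{10t}k + \frac{3^{10t - 1} + 1}{2}
\]
by the leading term $\frac{3^{10t - 1}}{4^{10t}}k^2$. For the linear term, note that $\frac{2^{10t} \cdot 4^{10t}}{3^{10t - 1}} = 3\left(\tfrac{8}{3}\right)^{10t} < 3 \cdot 4^{10t} \leq k$ (since $\tfrac83 < 4$), which rearranges to $2^{10t}k \leq \frac{3^{10t - 1}}{4^{10t}}k^2$. For the constant term, $k \geq 3 \cdot 4^{10t} \geq 2^{10t}$ gives $k^2 \geq 4^{10t}$, hence $\frac{3^{10t - 1} + 1}{2} \leq 3^{10t - 1} \leq \frac{3^{10t - 1}}{4^{10t}}k^2$. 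Summing the three terms yields
\[
\ell^+(10t) \leq \frac{3 \cdot 3^{10t - 1}}{4^{10t}}k^2 = \frac{3^{10t}}{4^{10t}}k^2.
\]

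Next I would bound $\ell^-(t + 1)$ from below: dropping the ceiling and using $(k - 1)^2 \geq k^2/4$ (valid for $k \geq 2$, hence certainly here) gives
\[
\ell^-(t + 1) = \left\lceil \frac{k - 1}{2^{t + 1}}\right\rceil^2 \geq \frac{(k - 1)^2}{4^{t + 1}} \geq \frac{k^2}{4^{t + 2}}.
\]
So it suffices to verify $\frac{3^{10t}}{4^{10t}}k^2 \leq \frac{k^2}{4^{t + 2}}$, which after clearing denominators reads $16 \cdot 3^{10t} \leq 4^{9t}$, i.e.\ $(3^{10}/4^9)^t \leq 1/16$. Since $4 \cdot 3^{10} = 236196 < 262144 = 4^9$, we have $3^{10}/4^9 < \tfrac14$, so $(3^{10}/4^9)^t \leq (1/4)^t \leq (1/4)^2 = 1/16$ for every $t \geq 2$, which finishes the proof.

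The only point that requires any care — and the closest thing to an obstacle — is that the constant $3 \cdot 4^{10t}$ in the hypothesis is exactly what is needed to absorb the linear-in-$k$ term $2^{10t}k$ into the quadratic leading term; the exponent jump from $t$ to $10t$ then leaves a comfortable margin (a factor $(1/4)^t$ against the required $1/16$) once $t \geq 2$, so no further optimization of constants is needed.
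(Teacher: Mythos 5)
Your proposal is correct and follows essentially the same route as the paper: bound $\ell^+(10t)$ above by $(3/4)^{10t}k^2$ (absorbing the two lower-order terms into the quadratic one via the hypothesis $k \geq 3\cdot 4^{10t}$), bound $\ell^-(t+1)$ below by $k^2/4^{t+2}$, and then compare the two powers of $3/4$ and $1/4$ using $t \geq 2$. The only differences are cosmetic arithmetic choices (you absorb each lower-order term individually rather than noting the leading term is the maximum, and you reduce the final comparison to $3^{10}/4^9 < 1/4$ while the paper reduces it to $(3/4)^5 \leq 1/4$); these do not change the substance.
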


    \begin{proof}
        First, since $k - 1 \geq \frac{k}{2}$, we have \[\ell^-(t + 1) = \ceil{\frac{k - 1}{2^{t + 1}}}^2 \geq \left(\frac{k}{2^{t + 2}}\right)^2 = \frac{1}{4^{t + 2}}k^2.\] Meanwhile, let $s = 10t$, so that \[\ell^+(s) = \frac{3^{s - 1}}{4^s}k^2 + 2^sk + \frac{3^{s - 1} + 1}{2} \leq \frac{3^{s - 1}}{4^s}k^2 + 2^sk + 3^{s - 1}.\] The condition $k \geq 3 \cdot 4^s$ guarantees that the first term is the largest of the three terms (the ratio of the first and second terms is $\frac{3^{s - 1}}{8^s} \cdot k \geq \frac{1}{3}\cdot \left(\frac{3}{8}\right)^s \cdot 3 \cdot 4^s \geq 1$, and the ratio of the first and third is $\frac{k^2}{4^s} \geq 1$), so \[\ell^+(s) \leq 3\cdot \frac{3^{s - 1}}{4^s}k^2 = \left(\frac{3}{4}\right)^sk^2.\] Finally, for all $t \geq 2$, we have \[\frac{1}{4^{t + 2}} \geq \frac{1}{4^{2t}} \geq \left(\frac{3}{4}\right)^{10t}\] (using the fact that $\frac{1}{4} \geq \left(\frac{3}{4}\right)^5$). Combining these bounds gives $\ell^+(10t) \leq \ell^-(t + 1)$, as desired. 
    \end{proof}

    Now let $r = \floor{\log_{10} \log_4 \frac{k}{3}}$. For each integer $1 \leq i \leq r$, define $t_i = 10^i$, and let $\ell_i = \ell^+(t_i)$. Then for all $1 \leq i \leq r$ we have $k \geq 3 \cdot 4^{t_i}$, so Claim \ref{claim:10t-for-exps} implies that for all $1 \leq i \leq r - 1$ we have \[\ell^+(t_{i + 1}) \leq \ell^-(t_i + 1),\] and therefore \[e(\ell^+(t_{i + 1})) \leq e(\ell^-(t_i + 1)) \leq 1 + \frac{1}{2^{t_i + 1} - 1} < 1 + \frac{1}{2^{t_i} - 1} \leq e(\ell^+(t_i)).\] So $e(\ell_r) < e(\ell_{r - 1}) < \cdots < e(\ell_1)$, which means the values of $e(\ell_i)$ for $1 \leq i \leq r$ are all distinct. (Note that for sufficiently large $k$, all our values of $\ell_i$ are valid, i.e., $0 \leq \ell^+(t_i) \leq \binom{k}{2}$ for all our values of $t_i$. To see this, clearly $\ell^+(t) \geq 0$ for all $t$. Meanwhile for sufficiently large $k$ we have $\ell_1 = \ell^+(10) \leq \binom{k}{2}$ (as the leading coefficient in our expression for $\ell^+(10)$ is $\frac{3^9}{4^{10}} < \frac{1}{2}$), and the above argument in particular implies that $\ell_{i + 1} < \ell_i$ for all $1 \leq i \leq r - 1$, so $\ell_i \leq \binom{k}{2}$ for all $1 \leq i \leq r$.)

    This gives us $r$ values $0 \leq \ell \leq \binom{k}{2}$ with distinct values of $e(\ell)$, so \[\abs{S_k} \geq r \geq \log_{10}\log_4 \frac{k}{3} - 1.\] Letting $a = \frac{1}{2\log 10}$, we then have $\abs{S_k} \geq a\log\log k$ for all sufficiently large $k$, as desired. 
\end{proof}


\section*{Acknowledgments} 

This work was done at the University of Minnesota Duluth REU with support from Jane Street Capital, the National Security Agency, and the CYAN Undergraduate Mathematics Fund at MIT. The author thanks Joe Gallian and Colin Defant for organizing the Duluth REU and providing this great research opportunity, Noah Kravitz and Anqi Li for helpful discussions, and Evan Chen, Noah Kravitz, and Mihir Singhal for helpful feedback that improved this paper. The author also thanks the anonymous referee for closely reading the paper and providing helpful comments. 

\bibliographystyle{amsplain}


\begin{dajauthors}
\begin{authorinfo}[sdas]
  Sanjana Das \\
  Massachusetts Institute of Technology \\
  Cambridge, MA \\
  sanjanad\imageat{}mit\imagedot{}edu
\end{authorinfo}
\end{dajauthors}

\end{document}